\documentclass[11pt]{article}

\usepackage[a4paper,margin=1in]{geometry}
\usepackage{amsmath,amsthm,amssymb,mathtools}
\usepackage{microtype}
\usepackage{hyperref}
\usepackage{authblk}
\usepackage{pgfplots}
\usepackage{float}
\usepackage{xurl}
\pgfplotsset{compat=1.18}
\usepackage[nameinlink,capitalize]{cleveref}

\newtheorem{theorem}{Theorem}[section]
\newtheorem{question}{Question}
\newtheorem{lemma}[theorem]{Lemma}
\newtheorem{proposition}[theorem]{Proposition}

\theoremstyle{definition}

\newtheorem{remark}[theorem]{Remark}

\title{Almost-Orthogonality in \texorpdfstring{$L^p$}{Lp} Spaces:
A Case Study with Grok}

\author[4]{
  Ziang Chen$^\ast$
}
\author[1]{
  Jaume de Dios Pont$^\dagger$
}
\author[2]{
  Paata Ivanisvili$^\dagger$
}
\author[3]{
  Jos\'e Madrid$^\dagger$
}
\author[4]{
  Haozhu Wang$^\ast$
}

\affil[1]{ETH Z\"urich, \texttt{jaume.dediospont@math.ethz.ch}}
\affil[2]{University of California, Irvine, \texttt{pivanisv@uci.edu}}
\affil[3]{Virginia Tech, \texttt{josemadrid@vt.edu}}
\affil[4]{xAI, \texttt{\{zchen,hwang\}@x.ai}}

\date{}

\begin{document}

\maketitle

\begin{center}
\small
Authors are listed alphabetically.\\
$^\dagger$Mathematical contributor,
$^\ast$Engineering contributor.
\end{center}
\begin{abstract}
Carbery proposed the following sharpened form of triangle inequality for many functions:  for any $p\ge 2$ and any finite sequence $(f_j)_j\subset L^p$ we have 
\[
\Big\|\sum_j f_j\Big\|_p \ \le\  \left(\sup_{j} \sum_{k} \alpha_{jk}^{\,c}\right)^{1/p'} \Big(\sum_j \|f_j\|_p^p\Big)^{1/p},
\]
where $c=2$, $1/p+1/p'=1$, and $\alpha_{jk}=\sqrt{\frac{\|f_{j}f_{k}\|_{p/2}}{\|f_{j}\|_{p}\|f_{k}\|_{p}}}$. 
In the first part of this paper we construct a counterexample showing that this inequality fails for every $p>2$. We then prove that if an estimate of the above form holds, the exponent must satisfy $c\le p'$. Finally, at the critical exponent $c=p'$, we establish the inequality for all integer values $p\ge 2$.

In the second part of the paper we obtain a sharp three-function bound
\[
\Big\|\sum_{j=1}^{3} f_j\Big\|_p \ \le\  \left(1+2\Gamma^{c(p)}\right)^{1/p'} \Big(\sum_{j=1}^{3} \|f_j\|_p^p\Big)^{1/p},
\]
where $p \geq 3$, $c(p) = \frac{2\ln(2)}{(p-2)\ln(3)+2\ln(2)}$ and   $\Gamma=\Gamma(f_1,f_2,f_3)\in[0,1]$ quantifies the degree of orthogonality among $f_1,f_2,f_3$. The exponent $c(p)$ is optimal, and improves upon the power $r(p) = \frac{6}{5p-4}$ obtained previously by Carlen, Frank, and Lieb. Some intermediate lemmas and inequalities appearing in this work were explored with the assistance of the large language model Grok.
\end{abstract}

\section{Introduction and the main results}

Let $(X,\mu)$ be a measure space. For $p\geq 1$ let $L^{p}(X, d\mu)$ be the space of $p$-integrable functions on $X$. 
In his study of almost orthogonality in Schatten-von Neumann  classes \cite{Carbery2009} Carbery was interested in the following question: 
\begin{question}\label{q1}
Under what conditions on the sequence of functions $(f_{j})_{j} \subset L^{p}$ do we have $\sum_{j} f_{j} \in L^{p}$\,? 
\end{question}

Notice that by the triangle inequality if $\sum_{j} \|f_{j}\|_{p}<\infty$ then necessarily $\sum_{j} f_{j} \in L^{p}$ without any further assumptions on the sequence $(f_{j})_{j}$. On the other hand if $f_{j}$'s have disjoint support then $\|\sum_{j=1}^{n} f_{j}\|_{p} = \left(\sum_{j=1}^{n} \|f_{j}\|_{p}^{p}\right)^{1/p}$ for any $n\geq 1$,  and, therefore, the finiteness of $\sum_{j} \|f_{j}\|_{p}^{p}$ implies $\sum_{j} f_{j} \in L^{p}$. Since $\left(\sum_{j} \|f_{j}\|_{p}^{p}\right)^{1/p}\leq \sum_{j} \|f_{j}\|_{p}$ a natural baseline assumption is $\sum_{j}\|f_{j}\|_{p}^{p} <\infty$. 

In what follows let $p\geq 2$. In \cite{Carbery2009} Carbery introduced nonnegative numbers 
\begin{align*}
    \alpha_{jk} := \sqrt{\frac{\|f_{j}f_{k}\|_{p/2}}{\|f_{j}\|_{p}\|f_{k}\|_{p}}} \in [0,1],
\end{align*}
where we set $\alpha_{kj}=0$ if  $\|f_{j}\|_{p} \|f_{k}\|_{p}=0$. The numbers $\alpha_{jk}$ measure the ``orthogonality'' between functions $f_{j}$ and $f_{k}$. Clearly if the $f_{j}, f_{k}$ have disjoint support then one can take $\alpha_{jk}=0$. 
In the $L^{2}$ case the Question~\ref{q1} has a simple answer in terms of $\alpha_{jk}$. Indeed, for any finite sequence $(f_{j})_{j} \subset L^{2}$ we have
\begin{align*}
    \Big\| \sum_{j} f_{j} \Big\|_{2}^{2} \leq  \sum_{j,k} \int_{X} |f_{j} f_{k}| d\mu \leq \sum_{j,k}  \alpha^{2}_{jk} \|f_{j}\|_{2} \|f_{k}\|_{2} \leq \|A\| \sum_{j} \|f_{j}\|_{2}^{2} \leq \Big(\sup_{k}\sum_{j}\alpha_{jk}^{2}\Big) \sum_{j}\|f_{j}\|_{2}^{2},
\end{align*}
where $A$ is the symmetric  matrix with entries $\alpha_{jk}^{2}$, and $\|A\|$ denotes its spectral radius. 
One ``naive'' way of interpolation between $L^{2}$ case and Cotlar--Stein lemma suggests that the following inequality may be true:

\begin{question}
Is it true that for $p\geq 2$ the inequality 
\begin{align}\label{qcarbery}
\Big\|\sum_j f_j\Big\|_p \ \le\  \left(\sup_{j} \sum_{k} \alpha_{jk}^{\,2}\right)^{1/p'} \Big(\sum_j \|f_j\|_p^p\Big)^{1/p},
\end{align}
where $1/p+1/p'=1$,
holds for all finite sequences $(f_{j})_{j}\subset L^{p}$? 
\end{question}

In \cite{Carbery2009} Carbery obtained a weaker form of \eqref{qcarbery} where $p\geq 2$ is any integer and $\alpha_{jk}^{2}$ are replaced by $\alpha_{jk}$. Since $\alpha_{jk}\in [0,1]$ therefore, having a larger power on $\alpha_{jk}$ only makes the inequality stronger. However, the advantage of Carbery's argument is that for even integer values of $p$ the inequality \eqref{qcarbery}, in fact, holds in Schatten-von Neumann $C_{p}$  classes with $\alpha_{jk}$ in place of $\alpha_{jk}^{2}$. 

Recently~\cite{CFIL2021} Carlen--Frank--Ivanisvili--Lieb verified the inequality \eqref{qcarbery} for two functions. In fact they proved a stronger estimate: for any $f,g \in L^{p}$ one has 
\begin{align}\label{CFIL}
    \|f+g\|_{p} \leq \left(1+\Gamma_{p}^{2/p}\right)^{1/p'}\left(\|f\|_{p}^{p}+\|g\|_{p}^{p}\right)^{1/p}, 
\end{align}
where $\Gamma_{p}=\Gamma_{p}(f,g) = \frac{2\|fg\|_{p/2}^{p/2}}{\|f\|_{p}^{p}+\|g\|_{p}^{p}}.$
Clearly $\Gamma^{2/p}_{p} \leq \alpha_{12}^{2}$, therefore, \eqref{CFIL} implies \eqref{qcarbery} for two functions. 

Later the estimate \eqref{CFIL} was further refined by Ivanisvili--Mooney in \cite{IvanisviliMooney2020} by showing that one has the bound 
\begin{align*}
   \|f+g\|_{p} \leq \frac{1}{2^{1/p}}\Big((1+\sqrt{1+\Gamma_{p}^{2}})^{1/p}+(1+\sqrt{1-\Gamma_{p}^{2}})^{1/p}\Big)(\|f\|_{p}^{p}+\|g\|_{p}^{p})^{1/p},   
\end{align*}
and the right-hand side of this inequality is the best possible one can have in terms of $\|f\|_{p}, \|g\|_{p}$, and $\|fg\|_{p/2}$.

In a subsequent work \cite{CFL2020} Carlen--Frank--Lieb showed that for any $n\geq 2$ and any $p\geq 2$ we have 
\begin{align}\label{CFL1}
\Big\|\sum_{j=1}^{n} f_j\Big\|_p \ \le\  \Big(1+(n-1)\Gamma_{p}(f_{1}, \ldots, f_{n})^{r}\Big)^{1/p'} \Big(\sum_{j=1}^{n} \|f_j\|_p^p\Big)^{1/p}
\end{align}
where 
\begin{align*}
    \Gamma_{p}(f_{1}, \ldots, f_{n}) = \frac{\Big \| \binom{n}{2}^{-1} \sum_{1\leq j<k\leq n} |f_{j}f_{k}|\Big \|_{p/2}^{p/2}}{n^{-1}\sum_{j=1}^{n}\|f_{j}\|_{p}^{p}}
\end{align*}
is $n$-function extension of the $\Gamma_{p}$ introduced in \eqref{CFIL}, and $r=r(n,p)=\frac{2n}{2n+(p-2)(2n-1)}$. Notice that $\Gamma_{p} \in [0,1]$. The inequalities \eqref{CFL1} and \eqref{qcarbery} do not imply each other. The goal of \eqref{CFL1} was to find a possible $n$-function analog of \eqref{CFIL} though we should point out that 2-function version of \eqref{CFL1} is actually weaker than \eqref{CFIL} because $r(2,p)=\frac{4}{4+3(p-2)}<\frac{2}{p}$. In \cite{CFL2020} Carlen--Frank--Lieb asked about the optimal (largest) power $r=r(n,p)$ one can have in \eqref{CFL1}. They provided an example showing that if \eqref{CFL1} holds for some $r>0$ then necessarily
\begin{align*}
    r \leq \frac{\ln(n-1) - \ln(n-2)}{(\frac{p}{2}-1) \ln(n) + \ln(n-1) - \frac{p}{2}\ln(n-2)} := \beta(n,p).
\end{align*}
The value $\beta(2,p)$ should be understood in the ``limiting'' sense:  $\lim_{n \to 2}\beta(n,p) = \frac{2}{p}$.

We provide a counterexample showing that for each $p>2$  the inequality \eqref{qcarbery} fails. In fact, the example shows that if the inequality holds with $\alpha_{jk}^{c}$ instead of $\alpha_{jk}^{2}$ in the right hand side of \eqref{qcarbery}  for some $c>0$ then necessarily $c \leq p'$. Our first theorem verifies the inequality at the critical power $c=p'$ for integers $p\geq 2$. 
\begin{theorem}\label{jose}
For any integer $p\geq 2$ the inequality 
\begin{align}\label{carbery0}
\Big\|\sum_j f_j\Big\|_p \ \le\  \left(\sup_{j} \sum_{k} \alpha_{jk}^{\,p'}\right)^{1/p'} \Big(\sum_j \|f_j\|_p^p\Big)^{1/p}
\end{align}
holds for all finite sequences $(f_{j})_{j}\subset L^{p}$. 
\end{theorem}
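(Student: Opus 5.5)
The plan is to expand the $p$-th power, bound each resulting integral by a pairwise H\"older inequality, and then estimate the resulting combinatorial sum by a second H\"older inequality that spreads the edge weights over stars of the complete graph. Write $c_j=\|f_j\|_p$ and $S=\sup_j\sum_k\alpha_{jk}^{p'}$.

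\textbf{Step 1 (expansion).} Since $p$ is an integer, pointwise
\[
\Big|\sum_j f_j\Big|^p\le \Big(\sum_j|f_j|\Big)^p=\sum_{j_1,\dots,j_p}|f_{j_1}\cdots f_{j_p}|,
\]
so
\[
\Big\|\sum_j f_j\Big\|_p^p\le\sum_{j_1,\dots,j_p}\int_X|f_{j_1}\cdots f_{j_p}|\,d\mu .
\]

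\textbf{Step 2 (pairwise H\"older).} Each index $a$ belongs to exactly $p-1$ of the $\binom p2$ pairs $\{a,b\}$. Hence
\[
|f_{j_1}\cdots f_{j_p}|=\prod_{a<b}|f_{j_a}f_{j_b}|^{1/(p-1)} .
\]
Apply H\"older with $\binom p2$ factors, each in $L^{p(p-1)/2}$; the exponents sum correctly since $\binom p2\cdot\frac{2}{p(p-1)}=1$. Note that $\big\||g|^{1/(p-1)}\big\|_{p(p-1)/2}=\|g\|_{p/2}^{1/(p-1)}$, and by definition $\|f_{j_a}f_{j_b}\|_{p/2}=\alpha_{j_aj_b}^2c_{j_a}c_{j_b}$. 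This gives
\[
\int_X|f_{j_1}\cdots f_{j_p}|\,d\mu\le\prod_{a}c_{j_a}\prod_{a<b}\alpha_{j_aj_b}^{2/(p-1)} .
\]
The case where some $c_j=0$ is trivial, since such $f_j$ vanish and contribute nothing.

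\textbf{Step 3 (star decomposition).} This is the key combinatorial step. For each root $r\in\{1,\dots,p\}$ set
\[
T_r=\prod_{b\ne r}\alpha_{j_rj_b}^{p'} .
\]
Every edge $\{a,b\}$ lies in exactly two stars, so
\[
\prod_{r=1}^p T_r^{1/p}=\prod_{a<b}\alpha_{j_aj_b}^{2p'/p}=\prod_{a<b}\alpha_{j_aj_b}^{2/(p-1)} .
\]
The bound from Step 2 can therefore be written as $\prod_{r=1}^p\big(c_{j_r}^pT_r\big)^{1/p}$.

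Now apply H\"older's inequality for the sum over $(j_1,\dots,j_p)$ with $p$ factors, each to the power $p$:
\[
\sum_{j_1,\dots,j_p}\prod_{r}\big(c_{j_r}^pT_r\big)^{1/p}\le\prod_{r=1}^p\Big(\sum_{j_1,\dots,j_p}c_{j_r}^pT_r\Big)^{1/p}.
\]
For fixed $r$, the inner sum factorizes over the free indices $j_b$, $b\ne r$:
\[
\sum_{j_1,\dots,j_p}c_{j_r}^pT_r=\sum_{j}c_j^p\Big(\sum_k\alpha_{jk}^{p'}\Big)^{p-1}\le S^{p-1}\sum_j c_j^p .
\]
Combining, $\|\sum_j f_j\|_p^p\le S^{p-1}\sum_j c_j^p$. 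Taking $p$-th roots and using $(p-1)/p=1/p'$ yields \eqref{carbery0}.

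\textbf{Main obstacle.} The delicate point is Step 3: choosing how to distribute the edge weights $\alpha^{2/(p-1)}$ among the vertices. The naive choice, keeping only the star at one fixed vertex, produces the weaker exponent $2/(p-1)<p'$ and fails. The exponent bookkeeping balances exactly: total edge mass $\binom p2\cdot\frac{2}{p-1}=p=(p-1)p'$. This is what makes the symmetric star decomposition work, and it is also consistent with $c=p'$ being critical. I would double-check carefully that the exponents $2/(p-1)$ and $p'$ match under the double counting of edges; the remaining steps are routine.
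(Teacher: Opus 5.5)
Your proposal is correct and follows the paper's proof essentially step by step. The paper also expands the $p$-th power, applies the $\binom{p}{2}$-fold H\"older inequality using $\|(g_ig_j)^{1/(p-1)}\|_{\binom p2}=\|g_ig_j\|_{p/2}^{1/(p-1)}$, and then performs your star decomposition followed by H\"older over the multi-indices with all exponents equal to $p$; the paper packages this last step as Lemma~\ref{lem: dax1}, stated for a general symmetric kernel and applied with counting measure, $K(m,n)=\alpha_{mn}$ and $F_s(j)=\|f_j\|_p$.
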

    
In the second part of this paper we prove a more general theorem stating that Carlen--Frank--Lieb's inequality \eqref{CFL1} holds with power $r = \beta(n,p)$ if and only if it holds for indicator functions $f_{j}=a_{j} \mathbf{1}_{A}$ for some common set $A$ of finite positive measure, and all $a_{j}\geq 0$, $j=1, \ldots, n$. 
\begin{theorem}\label{absnon}
Let $p\geq 2$ and $n>2$.  The inequality \eqref{CFL1} holds with $r=\beta(n,p)$ if and only if 
\begin{align}\label{num1}
    \Big(\sum_{j=1}^{n}a_{j} \Big)^{p'}\leq 1+(n-1)\Bigg[ n\Big(\binom{n}{2}^{-1}\sum_{1\leq i<j\leq n} a_{i}a_{j}\Big)^{p/2}\Bigg]^{\beta(n,p)} 
\end{align}
holds for all $a_{1}, \ldots, a_{n}\geq 0$ satisfying  $\sum_{j=1}^{n}a_{j}^{p}=1$
\end{theorem}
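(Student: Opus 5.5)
We prove the two directions separately. Necessity is a direct substitution. Sufficiency reduces the functional inequality to a pointwise numerical one and then integrates it by a Jensen-type argument for a concave, $1$-homogeneous function of two variables.

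\emph{Necessity.} Assume \eqref{CFL1} holds with $r=\beta(n,p)$. Take $f_j=a_j\mathbf 1_A$ with $\mu(A)=1$ and $\sum_j a_j^p=1$. Then $\|\sum_j f_j\|_p=\sum_j a_j$ and $\sum_j\|f_j\|_p^p=1$. Moreover
\[
\Gamma_p(f_1,\dots,f_n)=n\Big(\tbinom n2^{-1}\sum_{i<j}a_ia_j\Big)^{p/2}.
\]
Raising \eqref{CFL1} to the power $p'$ gives exactly \eqref{num1}.

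\emph{Sufficiency, step 1: homogenize \eqref{num1}.} For $a\in[0,\infty)^n\setminus\{0\}$ put
\[
s(a)=\sum_j a_j^p,\qquad t(a)=n\Big(\tbinom n2^{-1}\sum_{i<j}a_ia_j\Big)^{p/2}.
\]
Apply \eqref{num1} to $a/s^{1/p}$. Since $t$ is $p$-homogeneous, the hypothesis becomes, after raising to the power $p-1=p/p'$,
\[
\Big(\sum_j a_j\Big)^p\le \Phi(s,t):=s\,\varphi(t/s),\qquad \varphi(u)=\big(1+(n-1)u^{\beta}\big)^{p-1}.
\]
Here $u=t/s\in[0,1]$. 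Indeed, by the power-mean (Maclaurin) inequality, $\binom n2^{-1}\sum_{i<j}a_ia_j\le(\frac1n\sum_j a_j^p)^{2/p}$.

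\emph{Sufficiency, step 2: integrate.} Apply the pointwise bound with $a_j=|f_j(x)|$ and integrate:
\[
\Big\|\sum_j f_j\Big\|_p^p\le\int\Big(\sum_j|f_j|\Big)^p\,d\mu\le\int_X\Phi\big(s(x),t(x)\big)\,d\mu(x).
\]
Suppose $\Phi$ is concave on $\{0\le t\le s\}$. Being $1$-homogeneous, it is then also superadditive. Hence, approximating by simple functions (or applying Jensen to the probability measure $s\,d\mu/\int s\,d\mu$ and the function $\varphi$), we get
\[
\int\Phi(s,t)\le\Phi\Big(\int s,\int t\Big).
\]
Note that $\int s=\sum_j\|f_j\|_p^p$ and $\int t/\int s=\Gamma_p(f_1,\dots,f_n)$. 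The right-hand side is therefore
\[
\Big(\sum_j\|f_j\|_p^p\Big)\big(1+(n-1)\Gamma_p^\beta\big)^{p-1}.
\]
Taking $p$-th roots yields \eqref{CFL1} with $r=\beta$.

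\emph{Sufficiency, step 3: the obstacle.} The main work is verifying that $\varphi$ is concave on $[0,1]$, which is equivalent to concavity of $\Phi$. Write $h=1+(n-1)u^\beta$. Then $\varphi''\le0$ is equivalent to $(p-2)(h')^2\le -h\,h''$. This reduces to
\[
(p-2)(n-1)\beta\,u^\beta\le(1-\beta)\big(1+(n-1)u^\beta\big).
\]
This requires $\beta<1$, and since the left side grows faster in $u^\beta$, the worst case is $u=1$. So the condition is
\[
\beta(n,p)\le \frac{n}{n+(p-2)(n-1)}.
\]
I would prove this with the explicit formula for $\beta(n,p)$. Cross-multiplying, it becomes an inequality between logarithms of $n$, $n-1$, $n-2$, linear in $p$. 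I expect it to follow from concavity of $\ln$, using estimates such as $\ln(n-1)-\ln(n-2)\le\frac1{n-2}$ and $\ln n-\ln(n-1)\ge\frac1n$. It should be checked separately at $p=2$ and via the coefficient of $p$. The boundary case $u=0$ causes no trouble, because $\varphi$ is continuous there and concavity on $(0,1]$ extends.

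If this calculus inequality failed for some parameters, the fallback would be to replace $\varphi$ by its concave majorant and argue that \eqref{num1} still controls it. I do not expect to need this.
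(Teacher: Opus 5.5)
Your proposal is correct and follows essentially the same route as the paper: necessity via $f_j=a_j\mathbf{1}_A$, and sufficiency by normalizing pointwise, applying Jensen with respect to the probability measure proportional to $\sum_j|f_j|^p\,d\mu$, and reducing concavity of $\varphi$ to $\beta(n,p)\le \frac{n}{n+(n-1)(p-2)}$, which is exactly the claim \eqref{dax1} in Lemma~\ref{conc1}. The only minor difference is how you prove that last inequality: your elementary bounds $\ln\frac{n}{n-1}\ge\frac1n$ and $\ln\frac{n-1}{n-2}\le\frac1{n-2}$ do close the comparison of $p$-coefficients, since it reduces to $\ln\frac{n}{n-1}\ge\frac{n-2}{n}\ln\frac{n-1}{n-2}$, and this replaces the paper's monotonicity argument for $m(z)$.
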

The inequality \eqref{num1} numerically seems correct. The difficulty of verifying it rigorously lies in the fact that the expression $\sum_{j=1}^{n}a_{j}$ achieves several local maxima in the interior of the compact set $\{\sum a_{j}^{p}=1\}\cap \{\sum_{i<j}a_{i}a_{j}=b\}$, see Lemma~\ref{teknika1} in Section~\ref{lieb1}. Moreover, the numerics show that for different values $p, n$, and $b,$ sometimes the global maximum is on the boundary and sometimes it is in the interior of that set.  

\begin{remark}
In Lemma \ref{teknika1} we prove that we can reduce inequality \eqref{num1} further, to the case where $a_j\in\{x,y,0\}$, for some $x,y\geq0$.   
\end{remark}

In the special case $n=3$ there is a certain {\em symmetry} which helps to decrease the number of parameters and local maxima points involved in verification of \eqref{num1}. In particular, we manage to sharpen Carlen--Frank--Lieb's inequality for three functions up to optimal power $\beta(3,p)$. 
\begin{theorem}\label{mth2}
    For $n=3$ and $p\geq 3$  the inequality \eqref{CFL1} holds with the optimal power $r=\beta(3,p)$.
\end{theorem}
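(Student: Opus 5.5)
By Theorem~\ref{absnon}, it suffices to verify the numerical inequality \eqref{num1} for $n=3$. Note first that
\[
\beta(3,p)=\frac{\ln 2}{(\frac p2-1)\ln 3+\ln 2}=\frac{2\ln 2}{(p-2)\ln 3+2\ln 2}=c(p).
\]
Write $s=a_1a_2+a_1a_3+a_2a_3$. The target is then
\begin{align*}
(a_1+a_2+a_3)^{p'}\le 1+2\Big[3^{1-p/2}s^{p/2}\Big]^{c(p)}\qquad\text{whenever } a_1^p+a_2^p+a_3^p=1,\ a_j\ge 0 .
\end{align*}
I would not attack this directly. Instead I would apply the reduction of Lemma~\ref{teknika1}, which lets us assume $a_j\in\{x,y,0\}$. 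For three variables this leaves two families, up to permutation:
\begin{itemize}
\item[(A)] one coordinate vanishes, $(x,y,0)$;
\item[(B)] two coordinates coincide, $(x,x,y)$ with $x,y>0$.
\end{itemize}
The point of the $n=3$ symmetry is that both families are one-parameter curves on the sphere $\{\sum a_j^p=1\}$. The theorem therefore reduces to two single-variable inequalities.

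\textbf{Case (A).} Here $s=xy$ and $x^p+y^p=1$. I would set $x=\cos^{2/p}\theta$, $y=\sin^{2/p}\theta$, or equivalently use $t=xy\in[0,2^{-2/p}]$. Two endpoints are tight or nearly so. At $t=0$, i.e.\ the point $(1,0,0)$, we have equality. At $x=y$ the LHS is $2^{1/p'\cdot p'}\cdot 2^{-p'/p}$; more precisely $(2\cdot 2^{-1/p})^{p'}=2$.

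I would compare this case with the sharp two-function bound \eqref{CFIL}. That bound gives $(x+y)^{p'}\le 1+(2t^{p/2})^{2/p}=1+2^{2/p}t$. So it suffices to prove
\[
2^{2/p}t\le 2\big(3^{1-p/2}t^{p/2}\big)^{c(p)}\qquad\text{on } [0,2^{-2/p}].
\]
This is a comparison of two powers of $t$. Since $c(p)\,p/2\le 1$ for $p\ge 2$, it reduces to checking the right endpoint $t=2^{-2/p}$, and this is where $p\ge 3$ should enter.

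If this crude comparison fails for some $p$, I would instead handle Case (A) directly. I would take logarithms and show that the difference has a single sign change of its derivative in $\theta$.

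\textbf{Case (B).} Normalize by $y=\lambda x$ with $\lambda\in[0,\infty)$, so that $x^p(2+\lambda^p)=1$. The inequality becomes
\begin{align*}
\frac{(2+\lambda)^{p'}}{(2+\lambda^p)^{p'/p}}\le 1+2\Bigg[3^{1-p/2}\frac{(1+2\lambda)^{p/2}}{(2+\lambda^p)}\Bigg]^{c(p)} .
\end{align*}
Three points are important:
\begin{itemize}
\item at $\lambda=1$ (all $a_j$ equal) both sides equal $3$, with equality;
\item $\lambda\to\infty$ recovers the point $(0,0,1)$, again with equality;
\item $\lambda=0$ gives the point $(x,x,0)$, which is where $\beta(3,p)$ comes from. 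Here the CFL extremal example shows the inequality is tight to first order, so the exponent $c(p)$ is exactly calibrated at this boundary.
\end{itemize}

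The plan for Case (B) is as follows. First, write $F(\lambda)=\ln \text{RHS}-\ln\text{LHS}$. Second, check $F\ge 0$ near each of the three critical points by local expansion, namely second order at $\lambda=1$ and first order at $\lambda=0$. Third, show that $F'$ has a controlled number of zeros, for instance by showing that $F'$ times a positive factor is monotone or concave on $[0,1]$ and on $[1,\infty)$ separately.

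\textbf{Main obstacle.} The hard step is Case (B) on the interval between $\lambda=0$ and $\lambda=1$. There the inequality is tight at both ends, and the abstract warns that interior local maxima occur. Expansions alone will not suffice there. I would first try the substitution $u=\lambda^{p}$ or $u=\lambda/(2+\lambda)$ to make the terms comparable, and then use a convexity argument in $u$: $\ln(1+2e^{v})$ is convex in $v$, which should allow the RHS to be bounded below by a tangent line. If that fails, I would fall back on a rigorous interval-arithmetic verification on a compact $p$-range combined with an asymptotic argument as $p\to\infty$. This is presumably also the place where the restriction $p\ge 3$ is used.
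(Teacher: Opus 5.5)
Your reduction is correct and coincides with the paper's. You identify $\beta(3,p)=c(p)$, and Theorem~\ref{absnon} together with Lemma~\ref{teknika1} leaves only the family $(x,y,0)$ and the two-equal family. Case (A) is exactly the paper's Step~1 via \eqref{CFIL} and $c(p)\le 2/p$. Note that $p\ge 3$ plays no role there: $c(p)\le 2/p$ holds for all $p\ge 2$, and both sides equal $1$ at $t=2^{-2/p}$. In Case (B), the identity $2\cdot 3^{(1-p/2)c(p)}=2^{c(p)}$ turns your inequality into the paper's $h(s)\ge 0$ with $s=\lambda$. Your three equality points are the paper's $h(0)=h(1)=\lim_{s\to\infty}h(s)=0$.

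The genuine gap is that you never prove Case (B), and this one-variable inequality is essentially the whole content of the theorem. Local expansions at $\lambda=0,1,\infty$ plus the hope that ``$F'$ has a controlled number of zeros'' is a plan, not an argument. Nothing in the proposal bounds the number of critical points of $F$ on $(0,1)$ or on $(1,\infty)$. The latter is not easy either: $h$ has an interior maximum at some $q_2>1$ and then decays back to $0$.

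The concrete device you suggest cannot work. Along the curve, $v=c(p)\ln[3^{1-p/2}(1+2\lambda)^{p/2}/(2+\lambda^p)]$ satisfies $2e^{v}=1$ at $\lambda=0$ and $2e^{v}=2$ at $\lambda=1$, and the inequality is an equality at both points. A single tangent line to the strictly convex function $\ln(1+2e^{v})$ lies strictly below it away from the tangency point. The resulting sufficient inequality is therefore false at the other endpoint, and also as $\lambda\to\infty$, where $v\to-\infty$.

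The interval-arithmetic fallback is not carried out and is not routine. $F$ vanishes at three points for every $p$, so you would need uniform-in-$p$ local control there and a genuine asymptotic analysis as $p\to\infty$.

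What the paper supplies at this point is a rigorous root count: $h'$ has exact sign pattern $(+,-,+,-)$ with zeros $q_1\in(0,1)$, $1$, $q_2>1$. The route is as follows.
\begin{itemize}
\item Divide out the factor $s^p+s^{p-1}-2$, which carries the sign change at $1$.
\item Pass to the logarithm $\psi$, with $\psi(0)<0<\psi(1)$ and $\psi(\infty)=-\infty$.
\item Write $\psi'$ as $-P$ over a positive denominator, for an explicit pseudopolynomial $P$.
\item Reduce to $P''=s^{p-4}Q$.
\end{itemize}
The key input is that $Q'<0$ wherever $Q''=0$. This comes from the generalized Descartes rule of signs applied to $sQ''$ and to $W=(s\frac{d}{ds}-(p+1))(sQ'')$, Rolle interlacing, and explicit polynomial inequalities at $s=\frac{p-1}{p+1},\frac{p}{p+1},1$ using rational bounds on $c(p)$. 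This is also where $p\ge 3$ really enters, e.g.\ through $Q(0)=4(p-1)^2(p-2)(p-3)>0$.
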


\begin{remark}
There is nothing intrinsic about the assumption $p \ge 3$. In principle, the arguments presented here could likely be extended to cover the range $p \in [2,3)$ as well. However, while the proof initially proceeds in essentially the same way for $p \in [2,3)$ as for $p \ge 3$, the two cases begin to slightly diverge at a certain stage of the argument. In the interval $p \in [2,3)$ one must analyze a larger number of subcases, which would  lengthen the already technical parts of the paper. For this reason, we restrict the presentation to the case $p \ge 3$.
\end{remark}

\subsection{Structure of the paper and the use of the AI model Grok}

This paper developed in part through interactions with the AI model Grok. In particular, an early turning point was Grok Heavy's construction of a counterexample to Carbery's question. The version originally produced by Grok was substantially longer than the one presented here and contained several numerical inaccuracies; see the accompanying chat transcript in Appendix~\ref{grokchat}. Nevertheless, its underlying idea was correct. In the present paper, we give a more polished and streamlined version of this counterexample.

We also emphasize that, prior to Grok's construction, we were already aware that a counterexample should exist. However, the example we had previously found arose from a random brute-force search and did not provide conceptual insight, nor did it indicate the correct optimal exponent. By contrast, Grok's construction revealed a clear structural pattern, which in turn led to the optimal power $p'$.

In Section~\ref{jos01}, we prove the optimality of the exponent $p'$. For this part, the main ingredient of the argument, Lemma \ref{lem: dax1}, is essentially due to Grok; our contribution here consisted in reducing the proof of Theorem \ref{jose} to this lemma (refining Carbery's ideas), and carefully checking the details and editing the presentation.

The proofs of Theorems~\ref{absnon} and \ref{mth2} in Section~\ref{lieb1}, as well as the subsequent arguments in Section~\ref{mivedi}, were developed through a genuine human--AI collaboration. In these sections, we prove the Carlen--Frank--Lieb conjectured inequality for three functions for all $p \geq 3$, which appears to be the most technically difficult part of the paper. Grok 4.20 Heavy was used to generate a number of plots that were very helpful in guiding our intuition; many of these plots are included in the paper. Grok 4.20  was also used to verify numerous local inequalities arising in the argument; in total, we estimate that roughly thirty polynomial inequalities were checked in this way. To improve readability, we have moved these technical computations to Appendices~A and~B, and in Section~\ref{mivedi} we state only the corresponding lemmas. This allows the reader to first see the overall logical structure of the argument without interruption, and then, if desired, verify each technical computation separately in the appendices.

Without AI assistance, we likely would have chosen not to pursue such a lengthy and technical proof, since the computational verification alone would have required a substantial amount of time and effort. With Grok, however, this computational component became inexpensive and routine. At the same time, we wish to stress that the main conceptual idea of the proof, namely, the analysis based on counting roots of certain complicated functions, originated on the human side. The role of Grok was to help break this idea into a collection of simpler verifiable inequalities and to efficiently confirm those reductions.



\section{Proofs}

\subsection{Counterexamples}\label{jos1}

\begin{proposition}\label{jos0101}
Let $p\geq 2$. If the inequality 
\begin{align}\label{fineq}
\Big\|\sum_j f_j\Big\|_p \ \le\  \left(\sup_{j} \sum_{k} \alpha_{jk}^{\,c}\right)^{1/p'} \Big(\sum_j \|f_j\|_p^p\Big)^{1/p}
\end{align}
holds for some $c>0$ and all finite sequences $(f_{j})_{j} \in L^{p}$ then $c\leq p'$. 
\end{proposition}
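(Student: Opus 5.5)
The plan is to test \eqref{fineq} on a one-parameter family of functions that share a small common "bump" on top of disjointly supported pieces. Fix $N\ge 2$. Take pairwise disjoint sets $E,B_1,\dots,B_N$, each of measure $1$, and put
\[
f_j=a\,\mathbf 1_E+\mathbf 1_{B_j},\qquad j=1,\dots,N,
\]
with a parameter $a>0$ to be chosen depending on $N$. Write $t=a^p$.

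First I compute all quantities exactly.
\begin{itemize}
\item Each function has $\|f_j\|_p^p=1+t$, so $\sum_j\|f_j\|_p^p=N(1+t)$.
\item For $j\ne k$ we have $f_jf_k=a^2\mathbf 1_E$, hence $\|f_jf_k\|_{p/2}=a^2$. This gives
\[
\alpha_{jk}^2=\frac{t^{2/p}}{1+t},
\]
and so $\alpha_{jk}^c=t^{c/p}(1+t)^{-c/2}\le t^{c/p}$.
\item The diagonal terms satisfy $\alpha_{jj}=1$, since $\|f_j^2\|_{p/2}=\|f_j\|_p^2$. Therefore $\sup_j\sum_k\alpha_{jk}^c\le 1+(N-1)t^{c/p}$.
\item On the left, $\|\sum_j f_j\|_p^p=(Na)^p+N=N^pt+N$.
\end{itemize}

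Raising \eqref{fineq} to the $p$-th power and using $p/p'=p-1$, the inequality would give
\[
N+N^pt\ \le\ \bigl(1+(N-1)t^{c/p}\bigr)^{p-1}N(1+t).
\]

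Next I choose the scaling $t=sN^{1-p}$ with a fixed $s>0$. Dividing by $N$, the left side becomes $1+s$. On the right, $(N-1)t^{c/p}\le s^{c/p}N^{1-c(p-1)/p}=s^{c/p}N^{1-c/p'}$, and $t\to0$.

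Now suppose $c>p'$. Then $1-c/p'<0$, so $(N-1)t^{c/p}\to0$ as $N\to\infty$. The right side therefore tends to $1$, while the left side stays at $1+s>1$. This is a contradiction for $N$ large, so $c\le p'$.

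The only real step is finding the right scaling between the height $a$ of the common bump and the number of functions $N$. The choice $a^p\sim N^{1-p}$ makes the bump's contribution $N^pa^p$ to $\|\sum_j f_j\|_p^p$ comparable to the orthogonal part $N$, while the overlap term $N\alpha^c\sim N^{1-c/p'}$ is exactly borderline when $c=p'$. After that, everything above is routine bookkeeping. The one point to double-check is that the diagonal terms $\alpha_{jj}=1$ are included in $\sup_j\sum_k$; they are what produce the leading $1$ on the right-hand side.
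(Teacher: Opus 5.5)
Your proof is correct. Your construction is the paper's (disjoint private pieces plus one shared set, with $N\to\infty$), but you run it in a different regime.

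The paper fixes the common height at $1$, so $\|f_j\|_p^p=2$ and $\alpha_{jk}=2^{-1/p}$ stay constant. As $n\to\infty$ it compares the linear growth of $\bigl(\tfrac{1+n^{p-1}}{2}\bigr)^{1/(p-1)}\sim 2^{-1/(p-1)}n$ with that of $1+(n-1)2^{-c/p}$, which gives $2^{c/p}\le 2^{1/(p-1)}$.

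You instead shrink the bump with $a^p=sN^{1-p}$, making the configuration a small perturbation of the orthogonal case. The contradiction then comes from the exponent in $N\alpha^c\lesssim N^{1-c/p'}$ rather than from comparing two growth constants. This makes it more transparent why $p'$ is the threshold. At $c=p'$ your limit becomes $1+s\le(1+s^{1/(p-1)})^{p-1}$, which is true, consistent with Theorem~\ref{jose}. The paper's version needs no second parameter; yours needs only the trivial limit of the right-hand side to $1$.

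One small slip: since $\|f_j\|_p\|f_k\|_p=(1+t)^{2/p}$, the correct value is $\alpha_{jk}^2=\bigl(t/(1+t)\bigr)^{2/p}$, not $t^{2/p}/(1+t)$. This does not affect the argument, because you only use $\alpha_{jk}^c\le t^{c/p}$, which still holds.
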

\begin{proof}

    Consider a measure space $(X, d\mu)$  with $n \geq 3$ pairwise disjoint ``private'' sets $P_j$ each of measure $1$, and one ``common'' set $C$ of measure $1$. Define functions $f_j$ ($j = 1, \dots, n$) by $f_j = 1$ on $P_j$, $f_j = 1$ on $C$, and $f_j = 0$ elsewhere. Observe that $\|f_j\|_p^p = 2$.  Let $S := \sum_{j=1}^{n} f_j$. Then $S = 1$ on each $P_j$ and $S = n$ on $C$, so
\[
\|S\|_p^p = n + n^p.
\]
Next, notice that $\alpha_{jj}=1$. For each $j\neq k$, $\| f_{j}f_{k}\|_{p/2}^{p/2}=1$, hence $\alpha_{jk} = \frac{1}{2^{1/p}}$. Thus 
\begin{align*}
   \sup_{j}\sum_{k=1}^{n}\alpha_{jk}^{c} =  1 +  \frac{n-1}{2^{c/p}}.
\end{align*}
Thus the inequality \eqref{fineq} takes the form 
\begin{align*}
(n+n^{p})^{1/p}\leq \left(1 + \frac{n-1}{2^{c/p}}\right)^{1/p'}(2n)^{1/p}.
\end{align*}
This is the same as $n+n^{p} \leq \left(1+\frac{n-1}{2^{c/p}}\right)^{p-1} 2n$ which further can be rewritten as $\left(\frac{1+n^{p-1}}{2}\right)^{1/(p-1)}\leq 1+\frac{n-1}{2^{c/p}}$. Thus we obtain 
\begin{align}\label{tek1}
    2^{c/p} \leq \frac{n-1}{\left(\frac{1+n^{p-1}}{2}\right)^{1/(p-1)}-1}.
\end{align}
Taking $n \to \infty$ the inequality \eqref{tek1} yields $2^{c/p}\leq 2^{1/(p-1)}$, i.e., $c \leq \frac{p}{p-1}$. This finishes the proof of Proposition~\ref{jos0101}.
\end{proof}

\begin{remark}
The counterexample presented above is essentially the same as the one proposed to us by Grok, with two small modifications: we allow both  $n$ and $p$ to be arbitrary parameters rather than fixed values, and we polish the presentation. Treating $n$ as a free parameter and passing to the limit $n \to \infty$ enables us to recover the optimal exponent $p/(p-1)$.
\end{remark}

\subsection{The proof of Theorem~\ref{jose}}\label{jos01}
Without loss of generality assume $f_{j}\geq 0$ for all $j\geq 1$. We start with the following lemma. 

\begin{lemma}\label{lem: dax1}
Let $(\Omega, dx)$ be any measure space and let $K(x,y)$ be a nonnegative
symmetric integral kernel defined on $\Omega\times\Omega$.
Suppose $\kappa$ is defined by
\[
\kappa = \sup_{x}\int_{\Omega} K(x,y)^{\frac{p}{p-1}}\,dy.
\]
For any integer $p>1$ we have
\[
\int_{\Omega^{p}}
\prod_{1\leq i< j\leq p}K(x_{i},x_{j})^{\frac{2}{p-1}}
\prod_{s=1}^{p} F_{s}(x_{s})\,dx_{1}\cdots dx_{p}
\;\le\;
\kappa^{\,p-1}\,
\prod_{s=1}^{p}\|F_{s}\|_{p},
\]
for all $F_{s} \in L^{p}$, $s=1, \ldots, p$.
\end{lemma}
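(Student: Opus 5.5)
The plan is a single application of Hölder's inequality on $\Omega^{p}$ with $p$ factors. Each factor will be built around one variable $x_s$, so that integrating out the remaining variables produces exactly the quantity $\kappa$. We may assume $F_s\ge 0$, replacing $F_s$ by $|F_s|$, which only increases the left-hand side and leaves the norms unchanged. All integrands are then nonnegative, and Tonelli's theorem justifies every change in the order of integration.

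\textbf{Step 1: split the pair product.} Each variable $x_s$ occurs in exactly $p-1$ of the pairs $\{i,j\}$. By the symmetry of $K$ we can write
\[
K(x_i,x_j)^{\frac{2}{p-1}}=K(x_i,x_j)^{\frac{1}{p-1}}\,K(x_j,x_i)^{\frac{1}{p-1}} .
\]
This gives
\[
\prod_{1\le i<j\le p}K(x_i,x_j)^{\frac{2}{p-1}}\prod_{s=1}^{p}F_s(x_s)=\prod_{s=1}^{p}G_s(x_1,\dots,x_p),
\qquad
G_s:=F_s(x_s)\prod_{j\neq s}K(x_s,x_j)^{\frac{1}{p-1}} .
\]

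\textbf{Step 2: Hölder.} Applying Hölder's inequality on $\Omega^{p}$ with $p$ factors, each taken in $L^{p}(\Omega^{p})$, bounds the left-hand side by $\prod_{s}\|G_s\|_{L^p(\Omega^p)}$.

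\textbf{Step 3: compute each norm.} By Tonelli, we integrate first in the variables $x_j$ with $j\ne s$ and then in $x_s$:
\[
\|G_s\|_{L^p(\Omega^p)}^{p}=\int_\Omega F_s(x_s)^p\prod_{j\ne s}\Big(\int_\Omega K(x_s,x_j)^{\frac{p}{p-1}}\,dx_j\Big)dx_s\le \kappa^{p-1}\|F_s\|_p^p .
\]
Hence $\|G_s\|_{L^p(\Omega^p)}\le \kappa^{(p-1)/p}\|F_s\|_p$. Multiplying over the $p$ indices gives
\[
\prod_{s=1}^{p}\|G_s\|_{L^p(\Omega^p)}\le \kappa^{p-1}\prod_{s=1}^{p}\|F_s\|_p,
\]
which is the claimed bound.

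\textbf{Where the difficulty lies.} There is no serious obstacle. The only real point is finding the factorization in Step 1, which is dictated by requiring each exponent of $K$ to become $p'=p/(p-1)$ after raising to the $p$-th power. This is exactly why the exponent $2/(p-1)$ appears in the statement. Integrality of $p$ is used only so that $\Omega^{p}$ and the $p$-fold Hölder inequality make sense. Symmetry of $K$ is used only in Step 1, to write each pair factor with $x_s$ in the first slot, matching the definition of $\kappa$.
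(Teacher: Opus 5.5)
Your proposal is correct and follows the paper's proof essentially step for step. It uses the same symmetric splitting into factors $F_s(x_s)\prod_{j\ne s}K(x_s,x_j)^{1/(p-1)}$, the $p$-fold H\"older inequality in $L^p(\Omega^p)$, and the Tonelli computation giving $\|G_s\|_{L^p(\Omega^p)}\le\kappa^{(p-1)/p}\|F_s\|_p$. Your preliminary reduction to $F_s\ge 0$ is a harmless addition; the paper instead bounds the absolute value of the integral directly.
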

\begin{remark}
The proof below was proposed by Grok; see Appendix~\ref{grokchat} for a link to the conversation. After verifying its correctness, we present it essentially without modification.
\end{remark}
\begin{proof}
Since $K$ is symmetric and nonnegative, we rewrite the product in the integrand as
$$\prod_{1 \leq i < j \leq p} K(x_i, x_j)^{\frac{2}{p-1}} = \prod_{k=1}^p \Big( \prod_{j \neq k} K(x_k, x_j) \Big)^{\frac{1}{p-1}}.$$
Thus, the left-hand side is
$$\int_{\Omega^p} \prod_{k=1}^p \Big[ F_k(x_k) \Big( \prod_{j \neq k} K(x_k, x_j) \Big)^{\frac{1}{p-1}} \Big] \, dx_1 \cdots dx_p.$$
Define functions $h_k: \Omega^p \to \mathbb{R}$ for $k=1, \dots, p$ by
$$
h_k(x_1, \dots, x_p) = F_k(x_k) \Big( \prod_{j \neq k} K(x_k, x_j) \Big)^{\frac{1}{p-1}}.
$$
The integrand is $\prod_{k=1}^p h_k(\mathbf{x})$, so the integral is $\int_{\Omega^p} \prod_{k=1}^p h_k(\mathbf{x}) \, d\mathbf{x}$.
By the generalized Hölder's inequality on the space $L^p(\Omega^p)$ (with each exponent equal to $p$, since $\sum_{k=1}^p \frac{1}{p} = 1$),
$$
\Big| \int_{\Omega^p} \prod_{k=1}^p h_k(\mathbf{x}) \, d\mathbf{x} \Big| \leq \prod_{k=1}^p \| h_k \|_{L^p(\Omega^p)}.
$$
Without loss of generality, compute $\| h_1 \|_p$ (the others follow by symmetry, relabeling the $F_k$).
$$
\| h_1 \|_p^p = \int_{\Omega^p} \Big[ F_1(x_1) \Big( \prod_{j=2}^p K(x_1, x_j) \Big)^{\frac{1}{p-1}} \Big]^p \, dx_1 \cdots dx_p
= \int_{\Omega} F_1(x_1)^p \cdot \prod_{j=2}^p \int_{\Omega} K(x_1, x_j)^{\frac{p}{p-1}} \, dx_j\, dx_1,
$$
since the integrals separate. Each inner integral is $\int_\Omega K(x_1, y)^{\frac{p}{p-1}} \, dy \leq \kappa$, so
$$
\prod_{j=2}^p \int K(x_1, x_j)^{\frac{p}{p-1}} \, dx_j = \Big( \int_\Omega K(x_1, y)^{\frac{p}{p-1}} \, dy \Big)^{p-1} \leq \kappa^{p-1}.
$$
Thus,
$$\| h_1 \|_p^p \leq \kappa^{p-1} \int_\Omega F_1(x_1)^p \, dx_1 = \kappa^{p-1} \| F_1 \|_p^p,$$
so $\| h_1 \|_p \leq \kappa^{\frac{p-1}{p}} \| F_1 \|_p$.
Similarly for each $k$, $\| h_k \|_p \leq \kappa^{\frac{p-1}{p}} \| F_k \|_p$.
Therefore,
$$\int_{\Omega^p} \prod_{k=1}^p h_k(\mathbf{x}) \, d\mathbf{x} \leq \prod_{k=1}^p \left( \kappa^{\frac{p-1}{p}} \| F_k \|_p \right) = \kappa^{p-1} \prod_{k=1}^p \| F_k \|_p.$$
This completes the proof of Lemma~\ref{lem: dax1}.
\end{proof}

\vskip0.5cm 

Now we are ready to prove Theorem~\ref{jose}. 
\begin{remark}
The proof below was obtained with the assistance of Grok (see Appendix~\ref{grokchat}), after providing several specific directions to try. We present it here in a slightly polished form.
\end{remark}

Assume without loss of generality that the functions $f_j$ are nonnegative. Then,
$$
\Big\| \sum_j f_j \Big\|_p^p = \int \Big( \sum_j f_j(x) \Big)^p \, dx = \sum_{j_1, \dots, j_p} \int \prod_{s=1}^p f_{j_s}(x) \, dx.
$$
For a fixed multi-index $(j_1, \dots, j_p)$, set $g_s = f_{j_s}$ for $s = 1, \dots, p$. The integral becomes $\int \prod_{s=1}^p g_s(x) \, dx$.
Rewrite this as
$$\int \prod_{1 \leq i < j \leq p} (g_i(x) g_j(x))^{1/(p-1)} \, dx,$$
since each $g_s$ appears in exactly $p-1$ pairs, contributing an exponent of $(p-1) \cdot (1/(p-1)) = 1$.
Apply H\"older's inequality with equal exponents:
$$\int \prod_{1 \leq i < j \leq p} h_{ij}(x) \, dx \leq \prod_{1 \leq i < j \leq p} \| h_{ij} \|_{\binom{p}{2}},$$
where $\sum_{1 \leq i < j \leq p} \frac{1}{\binom{p}{2}} = 1$.
Set $h_{ij} = (g_i g_j)^{1/(p-1)}$. Then,
$$\| h_{ij} \|_{\binom{p}{2}} = \| g_i g_j \|_{\binom{p}{2} / (p-1)}^{1/(p-1)} = \| g_i g_j \|_{p/2}^{1/(p-1)},$$
since $\binom{p}{2} / (p-1) = p/2$.
By the given condition,
$$\| g_i g_j \|_{p/2} \leq \alpha_{j_i j_j}^2 \| g_i \|_p \| g_j \|_p,$$
so
$$\prod_{i < j} \| g_i g_j \|_{p/2}^{1/(p-1)} \leq \prod_{i < j} (\alpha_{j_i j_j}^2 \| g_i \|_p \| g_j \|_p)^{1/(p-1)} = \prod_{i < j} \alpha_{j_i j_j}^{2/(p-1)} \cdot \prod_{s=1}^p \| g_s \|_p,$$
where the last equality follows because each $\| g_s \|_p$ appears in $p-1$ terms, yielding exponent $(p-1) \cdot (1/(p-1)) = 1$.
Thus,
$$\int \prod_{s=1}^p f_{j_s}(x) \, dx \leq \prod_{1 \leq i < j \leq p} \alpha_{j_i j_j}^{2/(p-1)} \prod_{s=1}^p \| f_{j_s} \|_p.$$
Summing over all multi-indices gives
\begin{align}\label{fin1}
\Big\| \sum_j f_j \Big\|_p^p \leq \sum_{j_1, \dots, j_p} \Big( \prod_{1 \leq i < j \leq p} \alpha_{j_i j_j}^{2/(p-1)} \Big) \prod_{s=1}^p \| f_{j_s} \|_p.
\end{align}

The right-hand side of \eqref{fin1} is the discrete analog of the left-hand side of Lemma~\ref{lem: dax1} on the counting measure space over the indices $j$, with kernel $K(m, n) = \alpha_{m n}$ and functions $F_s(j) = \| f_j \|_p$ for each $s = 1, \dots, p$.
Here, $\kappa = \sup_j \sum_k \alpha_{j k}^{p/(p-1)} = \sup_j \sum_k \alpha_{j k}^{p'}$, and
\begin{align*}
\| F_s \|_p = \Big( \sum_j \| f_j \|_p^p \Big)^{1/p}, \quad \text{so} \quad \prod_{s=1}^p \| F_s \|_p = \sum_j \| f_j \|_p^p.  \end{align*}
By Lemma~\ref{lem: dax1},
$$
\sum_{j_1, \dots, j_p} \Big( \prod_{i < j} \alpha_{j_i j_j}^{2/(p-1)} \Big) \prod_{s=1}^p \| f_{j_s} \|_p \leq \kappa^{p-1} \sum_j \| f_j \|_p^p.
$$
Thus,
$$\Big\| \sum_j f_j \Big\|_p^p \leq \kappa^{p-1} \sum_j \| f_j \|_p^p,$$
and taking $p$-th roots yields
$$
\Big\| \sum_j f_j \Big\|_p \leq \kappa^{(p-1)/p} \Big( \sum_j \| f_j \|_p^p \Big)^{1/p} = \Big( \sup_j \sum_k \alpha_{j k}^{p'} \Big)^{1/p'} \Big( \sum_j \| f_j \|_p^p \Big)^{1/p}.
$$
This completes the proof of Theorem~\ref{jose}.

\subsection{Proof of  Theorems \ref{absnon} and part of Theorem \ref{mth2}}\label{lieb1}
We will need several technical lemmas.

\begin{lemma}\label{conc1}
For any $p\geq 2$, and all $n>2$ the map 
\begin{align*}
    \varphi_{n,p}(t) = \Big(1+(n-1) \Big( t n \binom{n}{2}^{-p/2} \Big)^{\beta(n,p)}\Big)^{p-1}
\end{align*}
is concave on $\left[0,\frac{1}{n} \binom{n}{2}^{p/2}\right]$. 
\end{lemma}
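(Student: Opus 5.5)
Write $\beta=\beta(n,p)$ and substitute $u = t\,n\binom{n}{2}^{-p/2}$. This map is linear in $t$, and the interval $\left[0,\frac1n\binom{n}{2}^{p/2}\right]$ becomes $u\in[0,1]$. Since concavity is preserved under affine reparametrization, it suffices to show that
\[
\psi(u)=\bigl(1+(n-1)u^{\beta}\bigr)^{p-1}
\]
is concave on $[0,1]$. The plan is to compute $\psi''$ directly and reduce its sign to an elementary inequality on the exponent $\beta$.

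Set $g(u)=1+(n-1)u^\beta$. Then
\[
\psi''=(p-1)g^{p-3}\Big[(p-2)(g')^2+g\,g''\Big].
\]
Using $g'=(n-1)\beta u^{\beta-1}$ and $g''=(n-1)\beta(\beta-1)u^{\beta-2}$, the bracket equals
\[
(n-1)\beta u^{\beta-2}\Big[(p-2)(n-1)\beta u^{\beta}+(\beta-1)\bigl(1+(n-1)u^\beta\bigr)\Big].
\]
Writing $s=(n-1)u^\beta\in[0,n-1]$, concavity is therefore equivalent to
\[
(\beta-1)+s\bigl((p-1)\beta-1\bigr)\le 0 \qquad \text{for } s\in[0,n-1].
\]
This expression is affine in $s$, so it is enough to check the two endpoints. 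At $s=0$ we need $\beta\le1$. At $s=n-1$ we need $\beta-1+(n-1)((p-1)\beta-1)\le0$, which simplifies to
\[
\beta\le \frac{n}{1+(n-1)(p-1)}.
\]
For $p\ge2$ the right-hand side is at most $1$, so the endpoint $s=n-1$ implies the endpoint $s=0$, and a single inequality remains.

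The main obstacle is proving
\[
\beta(n,p)=\frac{\ln\frac{n-1}{n-2}}{(\frac p2-1)\ln n+\ln(n-1)-\frac p2\ln(n-2)}\le \frac{n}{1+(n-1)(p-1)}
\]
for all $n>2$ and $p\ge2$. I would first rewrite the denominator of $\beta$ as
\[
\ln\frac{n-1}{n-2}+\Big(\frac p2-1\Big)\ln\frac{n}{n-2}.
\]
Setting $q=\frac p2-1\ge0$, $a=\ln\frac{n-1}{n-2}$ and $b=\ln\frac n{n-2}$, we have $\beta=\frac{a}{a+qb}$. The target's right-hand side is $\frac{n}{n+2q(n-1)}$, using $1+(n-1)(p-1)=n+(n-1)(2q)$. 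Cross-multiplying, the inequality becomes $2a(n-1)\le nb$, that is,
\[
2(n-1)\ln\frac{n-1}{n-2}\le n\ln\frac{n}{n-2}.
\]
This is independent of $p$, which is reassuring.

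To prove it, I would set $x=n-2>0$ and study
\[
h(x)=(x+2)\ln\frac{x+2}{x}-2(x+1)\ln\frac{x+1}{x}.
\]
As $x\to\infty$ we have $h(x)\to 2-2=0$. I then want to show $h$ is decreasing, so that $h\ge0$. Differentiating gives
\[
h'(x)=\ln\frac{x+2}{x}-\frac{2}{x}-2\ln\frac{x+1}{x}+\frac{2}{x}=\ln\frac{x(x+2)}{(x+1)^2}<0,
\]
which closes the argument.

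One last check is that $\beta>0$: both $a$ and $b$ are positive for $n>2$, so $\beta$ is positive, and this also ensures $u^\beta$ is well behaved at $u=0$. Concavity then holds on the open interval by the sign of $\psi''$, and extends to the closed interval by continuity of $\psi$.
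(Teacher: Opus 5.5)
Your proof is correct and follows the paper's argument essentially step for step. Both use the same rescaling to $[0,1]$, and both reduce $\psi''\le 0$ to an affine condition whose binding endpoint is $\beta\le \frac{n}{1+(n-1)(p-1)}$. Both then pass to the same $p$-independent inequality $2(n-1)\ln\frac{n-1}{n-2}\le n\ln\frac{n}{n-2}$, which the paper writes as $\frac{\ln(1+2z)}{\ln(1+z)}\ge \frac{2(1+z)}{1+2z}$ with $z=\frac{1}{n-2}$. The only cosmetic difference is the auxiliary function for this last inequality: you use $h(x)$, decreasing and vanishing at infinity, while the paper uses $m(z)=\ln(1+2z)-\frac{2(1+z)}{1+2z}\ln(1+z)$, increasing with $m(0)=0$.
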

\begin{remark}
The proof of the lemma presented below is due to Grok. It is correct and we present it here in a slightly polished form. For the full chat link conversation see Appendix~\ref{grokchat}. 
\end{remark}
\begin{proof}
Let $\beta:=\beta(n,p)$. By rescaling it suffices to show that $\psi(t) = (1+(n-1)t^{\beta})^{p-1}$ is concave on $[0,1]$. Set $u=n-1>1$ and $v=p-1\geq 1$. We have $\psi(t) = (1+ut^{\beta})^{v}$, so 
\begin{align*}
    \psi'(t) &= vu\beta t^{\beta-1}(1+ut^{\beta})^{v-1};\\
    \psi''(t) &= vu\beta (\beta-1) t^{\beta-2}(1+ut^{\beta})^{v-1}+v(v-1)u^{2}\beta^{2} t^{2(\beta-1)}(1+ut^{\beta})^{v-2}\\
    &=uv\beta t^{\beta-2}(1+ut^{\beta})^{v-2}[(\beta-1)(1+ut^{\beta})+u(v-1)\beta t^{\beta}]\\
    &=uv\beta t^{\beta-2}(1+ut^{\beta})^{v-2} [\beta -1 +ut^{\beta}(v\beta-1)].
\end{align*}

Thus the sign of $\psi''$ is determined by the sign of $h(t) := \beta -1 +ut^{\beta}(v\beta-1)$. Next, we claim 
\begin{align}\label{dax1}
    \beta \leq \frac{n}{1+(n-1)(p-1)}. 
\end{align}
Assuming the claim, notice that $\frac{n}{1+(n-1)(p-1)}\leq 1$. Therefore $h(0) = \beta-1\leq 0$. Since $h$ is monotone it suffices to verify $h(1)\leq 0$. We have $h(1) = \beta-1 + uv\beta-u$. Therefore $h(1)\leq 0$ if and only if $\beta \leq \frac{u+1}{1+uv}$. This inequality coincides with \eqref{dax1}. 

To verify \eqref{dax1} we show that $\frac{1}{\beta} \geq \frac{1+(n-1)(p-1)}{n}$. Substituting the expression for $\beta$ yields 
\begin{align*}
    \frac{\frac{p}{2}\ln\left(1+\frac{2}{n-2} \right) + \ln\left(\frac{n-1}{n}\right)}{\ln\left(1+ \frac{1}{n-2}\right)} \geq \frac{1+(n-1)(p-1)}{n}.
\end{align*}
Both sides are affine in $p$, and equal at $p=2$.  The coefficient of $p$ on the left is $\frac{1}{2} \frac{ \ln\left(1 + \frac{2}{n-2}\right) }{ \ln\left(1 + \frac{1}{n-2}\right) }$, and on the right is $\frac{n-1}{n}$.

Set $z = \frac{1}{n-2} > 0$. The inequality on coefficients simplifies to $\frac{ \ln(1 + 2z) }{ \ln(1 + z) } \geq 2 \frac{1 + z}{1 + 2z}$. Define $m(z) = \ln(1 + 2z) - 2 \frac{1 + z}{1 + 2z} \ln(1 + z)$. Then $m(0) = 0$ and $m'(z) = \frac{2 \ln(1 + z)}{(1 + 2z)^2} > 0$ for $z > 0$, so $m(z) > 0$ for $z > 0$. Thus, the coefficient on the left exceeds that on the right for $p > 2$, implying the desired inequality for $p \geq 2$.
\end{proof}


\begin{lemma}\label{teknika1}
    Let $p>2$, and let $n>2$ be an integer. The following are equivalent
\begin{itemize}
    \item[\textup{(i)}] The inequality   \begin{align}\label{utoloba1}
        \Big(\sum_{j=1}^{n} x_{j}\Big)^{p} \leq  \varphi_{n,p}\Big(\Big(\sum_{1\leq j<k\leq n} x_{j}x_{k}\Big)^{p/2}\Big) 
    \end{align}
    holds for all $x_{1}, \ldots, x_{n}\geq 0$ satisfying $\sum_{j} x_{j}^{p}=1$
    \item[\textup{(ii)}] The inequality 
    \begin{align*}
    (kx+(m-k)y)^{p} \leq \varphi_{n,p} \left((x^{2}k(k-1)/2+y^{2}(m-k)(m-k-1)/2+k(m-k)xy)^{p/2}\right) 
\end{align*}
holds for all $x,y\geq 0$, and all integers $k=0, \ldots, m$, and $m=1, \ldots, n$  satisfying $kx^{p}+(m-k)y^{p}=1$.
\end{itemize}

\end{lemma}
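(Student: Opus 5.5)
The direction (i)$\Rightarrow$(ii) is immediate. Given $x,y\ge 0$ and integers $0\le k\le m\le n$ with $kx^p+(m-k)y^p=1$, put $x_1=\dots=x_k=x$, $x_{k+1}=\dots=x_m=y$ and $x_{m+1}=\dots=x_n=0$. Then $\sum_j x_j^p=1$, $\sum_j x_j=kx+(m-k)y$, and
$$\sum_{j<l}x_jx_l=x^2\binom{k}{2}+y^2\binom{m-k}{2}+k(m-k)xy,$$
so (i) gives (ii) verbatim.

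For (ii)$\Rightarrow$(i), the plan is to fix the value of the pairwise sum and maximize the left-hand side. Since $\varphi_{n,p}$ is a fixed function, it suffices to show the following: for each attainable value $b$ of $\sum_{j<l}x_jx_l$, the maximum of $S(x)=\sum_j x_j$ over the compact set
$$M_b=\Big\{x\in[0,\infty)^n:\ \sum_j x_j^p=1,\ \sum_{j<l}x_jx_l=b\Big\}$$
is attained at a point whose nonzero coordinates take at most two distinct values. At such a maximizer $x^*$ we have $(\sum_j x_j)^p\le S(x^*)^p$ for every $x\in M_b$. After permuting coordinates, $x^*$ has exactly the form in (ii), with $m$ the size of the support and $k$ the multiplicity of one of the two values. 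Hence (ii) gives $S(x^*)^p\le\varphi_{n,p}(b^{p/2})$, which yields (i) on all of $M_b$. Monotonicity of $\varphi_{n,p}$ is not even needed, because $b$ is held fixed.

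To get the two-value structure, take a maximizer $x^*$ (it exists by compactness) and let $J$ be its support, $|J|=m$. On the relatively open face $\{x_j>0 \text{ for } j\in J,\ x_j=0 \text{ for } j\notin J\}$, the point $x^*$ is a local maximum of $S$ subject to the two smooth constraints, now viewed as functions of $(x_j)_{j\in J}$. The gradients of these constraints are $(p x_j^{p-1})_{j\in J}$ and $(S-x_j)_{j\in J}$, since $\partial_{x_j}\sum_{i<l}x_ix_l=S-x_j$.

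\begin{itemize}
\item If these two gradients are linearly dependent, then $x_j^{p-1}=a(S-x_j)+c$ for some constants, i.e.\ $t\mapsto p t^{p-1}+\lambda t$ is constant on $\{x_j\}$. This falls into the same analysis as below.
\item Otherwise Lagrange multipliers give $1=\lambda p x_j^{p-1}+\mu(S-x_j)$ for all $j\in J$. Equivalently, every $x_j$ with $j\in J$ solves $g(t):=\lambda p t^{p-1}-\mu t=1-\mu S$.
\end{itemize}

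Because $p>2$, we have $g''(t)=\lambda p(p-1)(p-2)t^{p-3}$, which has a constant sign on $(0,\infty)$. So $g$ is strictly convex, strictly concave, or (when $\lambda=0$) linear there. In the strictly convex or concave cases $g$ takes each value at most twice on $(0,\infty)$. In the linear case, either all $x_j$ are equal (if $\mu\neq 0$), or $g$ is identically $0$ and the equation reads $1=0$, which is impossible. Hence the nonzero coordinates of $x^*$ take at most two values, as required.

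The main obstacle is the degenerate case in the first bullet, where the constraint gradients are dependent and Lagrange's theorem does not apply directly. It is handled by the same convexity argument, because dependence forces the $x_j$ to solve an equation of the same shape $\lambda' p t^{p-1}+\mu' t=\text{const}$ with $(\lambda',\mu')\neq(0,0)$. Two further cases must be checked. When $\lambda'=0$, that equation again forces all $x_j$ to be equal. When $m=1$, the point is trivially of the required form.
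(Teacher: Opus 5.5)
Your proof is correct and takes essentially the paper's route: fix the value $b$ of the pairwise sum, apply Lagrange multipliers on the face given by the support of a maximizer, and use that $g(t)=\lambda p t^{p-1}-\mu t$ is strictly convex or strictly concave for $p>2$ to conclude that the nonzero coordinates take at most two values. Your version is somewhat more careful than the paper's: you treat explicitly the case where the constraint gradients are linearly dependent, which the paper passes over in silence, and you correctly note that neither the monotonicity of $\varphi_{n,p}$ nor a separate treatment of the extreme values of $b$ is needed.
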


Before we proceed with the proof of the lemma notice that the inequality \eqref{utoloba1} is the same as the one in Theorem~\ref{absnon}. 

    \begin{proof}
The implication $\textup{(i)} \Rightarrow \textup{(ii)}$ is trivial, take $x_{1}=x_{2}=\ldots=x_{k}=x$, $x_{k+1}=\ldots=x_{m}=y$, and the rest to be zero. So in what follows we focus on the implication $\textup{(ii)} \Rightarrow \textup{(i)}$.

      Clearly $\varphi_{n,p}$ is increasing.  Therefore it suffices to show that $T^{p}\leq \varphi_{n,p}(b^{p/2})$ where $T = \sup \sum_{j}x_{j}$, where the supremum is taken over all nonnegative variables $x_{1}, \ldots, x_{n}\geq 0$ satisfying the constraints $\sum_{j=1}^{n}x_{j}^{p}=1$ and $\sum_{1\leq i<j\leq n} x_{i}x_{j} =b$. Our goal is to understand on which variables $x_{1}, \ldots, x_{n}$ the value $T$ is achieved. We claim that this happens when
\begin{align}\label{veceq}
(x_{1}, \ldots, x_{n}) = (\underbrace{x,\ldots, x}_{k}, \underbrace{y, \ldots, y}_{m-k}, \underbrace{0, \ldots, 0}_{n-m})
\end{align}
for some $x,y>0$ and some integers $k, m$ satisfying $0\leq k \leq m \leq n$. We should point out that equality in \eqref{veceq} should be considered up to permutations of coordinates in the right-hand side of \eqref{veceq}, i.e., the equality \eqref{veceq} simply says that $k$ of the variables $x_{1}, \ldots, x_{n}$ equal to $x$; $m-k$ of them equal to $y$, and the rest is zero. 

First notice that $b \in [0, \binom{n}{2} n^{-2/p}]$. Indeed, the value $b=0$ is achieved if and only if $x_{k}=1$ for some integer $k$, and $x_{j}=0$ for all $j\neq k$. On the other hand by Lagrange multipliers the local maximum of $\sum_{i<j}x_{i}x_{j}$ under the constraint $\sum_{j} x_{j}^{p}=1$, $x_{j}>0$ for all $j=1, \ldots, n$  is achieved when $\sum_{j\neq k}x_{j} - \lambda px_{k}^{p-1}=0$ for all $k=1, \ldots, n$, and some $\lambda>0$ (notice that $\lambda\leq 0$ does not have a solution unless $x_{1}=\ldots =x_{n}=0$ which  would violate $\sum_{j} x_{j}^{p}=1$). Thus $s:=\sum_{j=1}^{n}x_{j} = \lambda p x_{k}^{p-1}+x_{k}$. Notice that the function $h(t) = \lambda p t^{p-1}+t$ is strictly increasing on $[0, \infty)$ implying that  $\lambda p x_{k}^{p-1}+x_{k} =\lambda px_{j}^{p-1}+x_{j}$ can happen for all pairs $j,k$  if and only if $x_{1}=\ldots = x_{n}=x$. In this case $1=\sum_{j=1}^{n}x_{j}^{p} = nx^{p}$, and $\sum_{1\leq i<j\leq n} x_{i}x_{j} = \binom{n}{2}x^{2} = \binom{n}{2}n^{-2/p}$. 
      
      The value  $\binom{n}{2}n^{-2/p}$ is also a global maximum. Indeed, on the boundary where some of the $x_{j}$'s vanish we can argue in a similar way and conclude that the rest of the variables have to be equal to each other. This leads to $\sum_{1\leq i<j\leq n}x_{i}x_{j} = \binom{k}{2}k^{-2/p}$ for some integer $k$, $1\leq k \leq n$. On the other hand the map $k \mapsto \binom{k}{2}k^{-2/p}$ is increasing on $[1, n]$, hence its maximal value is $\binom{n}{2}n^{-2/p}$.

\begin{remark}An alternative way: 
 By AM-GM inequality and H\"older inequality we have
      $$
b\leq \frac{n-1}{2}\sum_{i=1}^{n}x^2_i\leq \frac{n-1}{2}\Big(\sum_{i=1}^{n}x^p_i\Big)^{2/p}n^{1-2/p}=\binom{n}{2}n^{-2/p}.
      $$
      The equality happens if and only if $x_1=x_2=\dots=x_n=\frac{1}{n^{1/p}}$.
\end{remark}

      Thus the implication $\textup{(ii)} \Rightarrow \textup{(i)}$ is trivial if $b=0$ or $b=\binom{n}{2}n^{-2/p}$. In what follows assume $b\in (0, \binom{n}{2}n^{-2/p})$. Let the structure of local maximum of $\sum_{j} x_{j}$ in the interior of the domain, i.e., $x_{1}, \ldots, x_{n}>0$ and later we will explain what happens when part of the $x_{j}$ are equal to zero.

      Consider Lagrangian function
      \begin{align*}
          \Psi(x_{1}, \ldots, x_{n}) = \sum_{j} x_{j} - \lambda \Big(\sum_{j} x_{j}^{p} -1\Big) - \mu \Big(\sum_{j<k} x_{j}x_{k} -b\Big).
      \end{align*}
      Partial derivatives of $\Psi$  vanish if and only if $1-\lambda p x_{k}^{p-1} - \mu \sum_{j\neq k} x_{j}=0$ for all $k=1, \ldots, n$. Adding $-\mu x_{k}$ to both sides of the equation,  and denoting  as before $s := \sum_{j=1}^{n}x_{j}$ we obtain 
      \begin{align*}
          1-\mu s = \lambda p x_{k}^{p-1} - \mu x_{k}\quad \text{for all} \quad k=1, \ldots, n. 
      \end{align*}

      Let $g(t) = \lambda p t^{p-1} - \mu t$. If $\lambda=0$ then the conclusion is trivial. If $\lambda \neq 0$ then since either $g''>0$ or $g''<0$ the graph of $g(t)$ intersects any horizontal line in at most two points. This shows that at the interior local maximum, the variables $x_{1}, \ldots, x_{n}$ take at most two values. 

      If we are on the boundary, say part of the $x_{j}$ are zero, then we can argue similarly as before. So we conclude that at the global maximum variables $x_{1}, \ldots, x_{n}$ take at most 3 values, these are $0,u,v$ for some $u,v>0$. 
    \end{proof}

Now we are ready to prove Theorem~\ref{absnon}. 

\begin{proof}[Proof of Theorem~\ref{absnon}]
The implication \eqref{CFL1} with $r=\beta(n,p)$ implies \eqref{num1} is trivial, just take $f_{j} = a_{j} \mathbf{1}_{A}, j=1, \ldots, n$, where $A$ is a common measurable set with finite positive measure. So in what follows we focus on the implication \eqref{num1} implies \eqref{CFL1}. 

Recall that the inequality \eqref{utoloba1} is the same as  \eqref{num1}. Pick nonnegative functions $f_{1},\ldots, f_{n}$ in $L^{p}$. Then the validity of \eqref{utoloba1} implies 
\begin{align}\label{ut22}
    \Big(\sum_{j=1}^{n} g_{j} \Big)^{p} \leq \varphi_{n,p}\Big(\Big(\sum_{1\leq i<j\leq n} g_{i} g_{j}\Big)^{p/2} \Big)
\end{align}
on a set $X' = \{ \sum_{j} f^{p}_{j}>0\}$, where $g_{k} = \frac{f_{k}}{\big(\sum_{j} f_{j}^{p}\big)^{1/p}}$, $k=1,\ldots,n$.  Consider the probability measure 
\begin{align*}
    d\nu = \frac{\sum_{j} f_{j}^{p}}{\sum_{j} \|f_{j}\|_{p}^{p}}\, d\mu
\end{align*}
on a set $X'$. Recall that the function $\varphi_{n,p}$ is concave on $[0, \frac{1}{n} \binom{n}{2}^{p/2}]$, and the quantity $\Big(\sum_{1\leq j<i\leq n}g_{i}g_{j} \Big)^{p/2}$ is at most $\frac{1}{n} \binom{n}{2}^{p/2}$ (see the proof of Lemma~\ref{teknika1}). So, integrating the inequality \eqref{ut22} over the set $X'$ with respect to the probability measure $d\nu$ and applying Jensen's inequality (here we invoke Lemma~\ref{conc1}) we obtain 
\begin{align}\label{ut33}
    \int \Big(\sum_{j} g_{j} \Big)^{p} d\nu   \leq \varphi_{n,p}\Big( \int \Big(\sum_{i<j} g_{i} g_{j}\Big)^{p/2} d\nu \Big).
\end{align}
 It is a straightforward calculation to verify that the inequality \eqref{ut33} coincides with \eqref{CFL1}  $r=\beta(n,p)$.  This finishes the proof of Theorem~\ref{absnon}.
\end{proof}

\section{Completing the proof of Theorem~\ref{mth2}}\label{mivedi}

Notice an identity that is only valid for $n=3$:
\begin{align}\label{eq: nice identity for n=3}
    (n-1) \left( \frac{n}{2}\cdot \binom{n}{2}^{-p/2}\right)^{\beta(n,p)}=1.
\end{align}
Therefore, in this special case we have $\varphi_{3,p}(t) = \Big(1+(2t)^{\beta(3,p)}\Big)^{p-1}$. Thus the inequality \eqref{utoloba1}, after raising both sides to the power $1/(p-1)$, can be rewritten as follows 
\begin{align*}
(x+y+z)^{p'} \leq 1+\Big(2(xy+xz+yz)^{p/2}\Big)^{\beta(3,p)}.
\end{align*}

\begin{lemma}[main inequality]\label{axali1}
Let $p\geq 3$, and let $c(p) = \frac{2\ln(2)}{(p-2)\ln(3)+2\ln(2)}$. The inequality 
\begin{align}\label{sami1}
(x+y+z)^{p'} \leq 1+\Big(2(xy+xz+yz)^{p/2}\Big)^{c(p)}  
\end{align}
holds for all $x,y,z\geq 0$ satisfying $x^{p}+y^{p}+z^{p}=1$.
\end{lemma}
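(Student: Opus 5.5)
We would prove Lemma~\ref{axali1} by reducing it, through Lemma~\ref{teknika1}, to a one-parameter inequality, and then treat that inequality by root counting. First we check that $c(p)=\beta(3,p)$. Indeed $\beta(3,p)=\ln 2/\big((\tfrac p2-1)\ln 3+\ln 2\big)$, which equals $c(p)$. Together with the identity \eqref{eq: nice identity for n=3}, this shows that \eqref{sami1} is exactly \eqref{utoloba1} for $n=3$. So by Lemma~\ref{teknika1} it suffices to verify \eqref{sami1} on vectors whose entries take at most the values $\{0,u,v\}$. Up to permutation, this leaves three kinds of points: $(1,0,0)$, $(x,y,0)$, and $(x,x,y)$, where the last family contains the fully symmetric point.

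The degenerate cases are quick. At $(1,0,0)$ both sides of \eqref{sami1} equal $1$. For $(x,y,0)$ with $x^p+y^p=1$, set $\Gamma=2(xy)^{p/2}\in[0,1]$. The two-function bound \eqref{CFIL}, applied to constant functions $x\mathbf 1_A$ and $y\mathbf 1_A$, gives $(x+y)^{p'}\le 1+\Gamma^{2/p}$. We then check that $c(p)\le 2/p$. This is equivalent to $p\ln 2\le (p-2)\ln 3+2\ln 2$, i.e. $(p-2)\ln 2\le (p-2)\ln 3$, which holds. Since $\Gamma\le 1$, we get $\Gamma^{2/p}\le\Gamma^{c(p)}$, and this case is done. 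At the symmetric point $x=y=z=3^{-1/p}$ both sides equal $3$, so equality holds there. This equality is what forces the exponent $c(p)$ to be sharp.

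The real content is the family $(x,x,y)$ with $2x^p+y^p=1$. We parametrize it by $t=y/x\in[0,\infty)$. Then $x^p=1/(2+t^p)$, so
\[
x+x+y=\frac{2+t}{(2+t^p)^{1/p}},\qquad xy+xz+yz=\frac{1+2t}{(2+t^p)^{2/p}}.
\]
The inequality becomes $G(t)\ge 0$, where
\[
G(t)=\ln\Big(1+\big(2(1+2t)^{p/2}/(2+t^p)\big)^{c}\Big)-\frac{p'}{p}\big(p\ln(2+t)-\ln(2+t^p)\big).
\]
The endpoint $t=0$ is the $(x,y,0)$ case, already handled. At $t=1$ we have $G(1)=0$, and $G'(1)=0$ because $t=1$ is a critical point of both sides, by symmetry of the constraint. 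As $t\to\infty$ the configuration tends to $(0,0,1)$, where both sides tend to $1$; there we will expand to leading order in $1/t$ and check the sign. We would then show that $G$ has no zeros in $(0,\infty)$ other than the double zero at $t=1$. The plan is to bound the number of sign changes of $G'$: write $G'$, clear denominators, substitute $s=(1+2t)^{p/2}/(2+t^p)$ where convenient, and split $(0,\infty)$ into the subintervals $(0,\tfrac12)$, $(\tfrac12,1)$, $(1,2)$ and $(2,\infty)$. On each subinterval a monotonicity or convexity statement for an auxiliary function should limit the number of critical points. Together with the signs at $t=0$, $t=1$ and $t=\infty$, this yields $G\ge0$. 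Wherever a local bound has to hold uniformly in $p\ge3$, we would verify it as an explicit polynomial-type inequality in $(t,p)$.

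The main obstacle is this last step. $G$ involves the transcendental exponents $c(p)$ and $p'$, and $t=1$ is a tangential zero, so crude bounds fail near $t=1$ and, for large $p$, near the ends. Our first attempt would be a second-order analysis near $t=1$, checking $G''(1)>0$ uniformly for $p\ge3$. Away from $t=1$ we would use monotone comparison: for $t\ge1$, replace $(\cdot)^{c}$ with $c\le 2/p$ by a tangent-line bound to the concave function $u\mapsto u^{c}$. The regime $p\in[2,3)$ is excluded precisely because there the sign pattern of the auxiliary derivatives changes.
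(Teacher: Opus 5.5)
Your reductions match the paper's Steps 1--2: $c(p)=\beta(3,p)$ with the $n=3$ identity, Lemma~\ref{teknika1} to restrict to $(x,y,0)$ and $(x,x,y)$, the boundary case from \eqref{CFIL} and $c(p)\le 2/p$, and then a one-variable function on $(x,x,y)$. Your $t$ is the paper's $s$. With $F:=2(1+2t)^{p/2}/(2+t^{p})$ and $L$ the left-hand side of \eqref{sami1}, your $G=\ln(1+F^{c})-\ln L$ has the same sign as the paper's $h$.

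The genuine gap is that the only hard part of the lemma, $G\ge 0$, is left as a plan. You name no auxiliary function, and nothing actually bounds the number of zeros of $G'$. You also treat $t=0$ as settled, but it is an equality point: $G(0)=0$ at $(2^{-1/p},2^{-1/p},0)$. It is this point, not the symmetric one, that blocks a larger exponent in \eqref{CFL1}; in the normalization of \eqref{sami1} the symmetric point only forces $c\ge c(p)$. So $G$ vanishes at $t=0$, has a double zero at $t=1$, and tends to $0$ at $\infty$. Hence $G'$ must change sign at least three times, $(+-+-)$, and you must show it changes sign exactly three times, uniformly in $p\ge 3$. Near $t=0$ the only margin is the first-order quantity $G'(0)=\frac p2\big(c-\frac1{p-1}\big)>0$, so crude bounds fail there for every $p$, not only for large $p$.

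The tools you list do not close this. A tangent line to the concave map $u\mapsto u^{c}$ bounds $F^{c}$ from above, which is the wrong direction for proving $1+F^{c}\ge L$. Also, because of the terms $t^{p}$, $t^{p-1}$ and the transcendental $c(p)$, there are no polynomial inequalities in $(t,p)$ that give uniform control in $t$.

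The paper fills this step with a global zero-counting argument:
\begin{itemize}
\item It multiplies the bracket in $h'$ by an explicit factor of sign $-$ on $(0,1)$ and $+$ on $(1,\infty)$ and takes a logarithm. This gives $\psi$ with $\psi(0)<0$, $\psi(1)>0$, $\psi(\infty)=-\infty$, so it suffices that $\psi'$ changes sign once.
\item $\mathrm{sign}\,\psi'=-\mathrm{sign}\,P$ for an explicit pseudopolynomial $P$ with $P(1)=P'(1)=0$, $P''(1)<0$ and $P''=s^{p-4}Q$.
\item The required pattern $(+-+)$ for $Q$ is reduced to Lemma~\ref{tek11} ($Q''(s)=0\Rightarrow Q'(s)<0$).
\item That lemma is proved by the generalized Descartes rule of signs for $g=sQ''$ (at most three positive zeros) and for $W=\big(s\frac{d}{ds}-(p+1)\big)g$ (at most two), Rolle interlacing, and sign checks at $s_A=\frac{p-1}{p+1}$, $r=\frac{p}{p+1}$ and $1$.
\item These checks become polynomial inequalities in $p$ alone once $s_A^{p}$ and $r^{p}$ are confined to short intervals and $c(p)$ is replaced by rational bounds.
\end{itemize}
Some such global sign rule, controlling the zeros of $G'$ on all of $(0,\infty)$ at once, is the missing idea in your proposal. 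A subinterval-by-subinterval monotonicity argument would have to be made explicit, and it would have to be sharp at $t=0$, $t=1$ and $t=\infty$ simultaneously.
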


In what follows by continuity we assume $p>3$.

\medskip
\noindent\textbf{Step 1}. First we verify that the inequality holds on the boundary, i.e., when at least one of the variables is zero. Without loss of generality assume $z=0$. In this case the inequality takes the form 
\begin{align}\label{tej1}
    (x+y)^{p'}\leq 1+(2x^{p/2}y^{p/2})^{c(p)}.
\end{align}
Notice that $c(p) \leq \frac{2}{p}$. Indeed, to verify the inequality $\frac{2\ln(2)}{(p-2)\ln(3)+2\ln(2)} \leq \frac{2}{p}$ divide both sides by $2$ and multiply by the denominators, we arrive at $p \ln(2) \leq (p-2)\ln(3)+2\ln(2)$ which follows from $(p-2)(\ln(3)-\ln(2))\geq 0$. Next we have $2x^{p/2}y^{p/2}\leq x^{p}+y^{p}=1$, and hence the right hand side of \eqref{tej1} is at least $1+(2x^{p/2}y^{p/2})^{2/p} = 1+2^{2/p}xy$. On the other hand the stronger inequality 
\begin{align*}
    (x+y)^{p'}\leq 1+2^{2/p}xy
\end{align*}
under the constraint $x^{p}+y^{p}=1$ was proved recently in Theorem~1.1 (case $p\geq 2$) in \cite{CFIL2021}.

\medskip
\noindent\textbf{Step 2.} By Lemma~\ref{teknika1} applied to $n=3$ and Step 1 it suffices to verify the inequality \eqref{sami1} when in the variables $x,y$ and $z$ are positive and two of them are equal to each other. Without loss of generality we can assume that $z=y$. So we want to show 
\begin{align}\label{sof1}
    (x+2y)^{p'}\leq 1+2^{c}(2xy+y^{2})^{pc/2}
\end{align}
under the constraint $x^{p}+2y^{p}=1$ where $c=c(p)$. Set $t := y/x$. From $x^{p}+2y^{p}=1$ we obtain $x^{p}(1+2t^{p})=1$ and hence $x= (1+2t^{p})^{-1/p}$. 

Next, we have $x+2y=x(1+2t)=(1+2t^{p})^{-1/p} (1+2t)$, therefore, the left hand side of \eqref{sof1} takes the form 
\begin{align*}
(x+2y)^{p'}=(1+2t^{p})^{-1/(p-1)}(1+2t)^{p'}.
\end{align*}
Similarly we have $2xy+y^{2} = x^{2}(2t+t^{2}) = (1+2t^{p})^{-2/p}(2t+t^{2})$. So the right hand side of \eqref{sof1} takes the form 
\begin{align*}
1+2^{c}(2xy+y^{2})^{pc/2} =1+2^{c}(1+2t^{p})^{-c}(2t+t^{2})^{pc/2}.
\end{align*}

Thus to prove \eqref{sof1} it suffices to show 
\begin{align}\label{side1}
2^{c} \frac{(2t+t^{2})^{pc/2}}{(1+2t^{p})^{c}} - \frac{(1+2t)^{p/(p-1)}}{(1+2t^{p})^{1/(p-1)}} +1\geq 0
\end{align}
for all $t\geq 0$. 
Next we make change of variables $t=1/s$, $s>0$. The expression in the left hand side of  \eqref{side1} rewrites as follows 
\begin{align*}
h(s):=\left( \frac{2(2s+1)^{p/2}}{s^p + 2} \right)^c - \left( \frac{(s+2)^p}{s^p + 2} \right)^{\frac{1}{p-1}} + 1.
\end{align*}

\begin{figure}[H]
\centering
\includegraphics[width=1\textwidth]{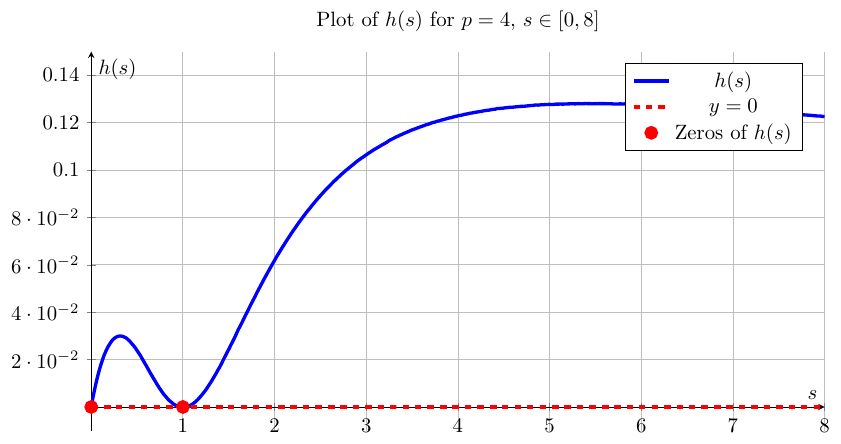}
\label{fig:p1}
\end{figure}

We have 
\begin{align*}
    h'(s) = \dfrac{p}{s^{p} + 2} \left[ -c \left( \dfrac{2(2s+1)^{p/2}}{s^{p}+2} \right)^{c} \dfrac{ s^{p} + s^{p-1} - 2 }{2s + 1} 
+ 2 \left( \dfrac{(s+2)^{p}}{s^{p}+2} \right)^{\frac{1}{p-1}} \dfrac{ s^{p-1} - 1 }{ (p-1)(s+2) } \right].
\end{align*}

\begin{figure}[H]
\centering
\includegraphics[width=1\textwidth]{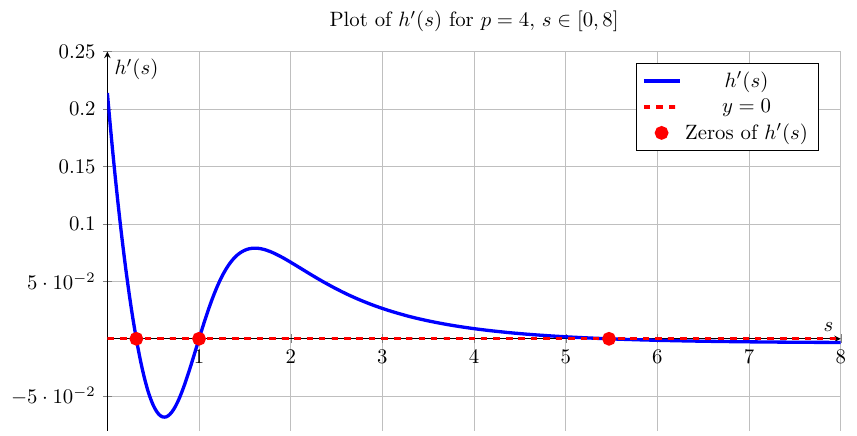}
\label{fig:p2}
\end{figure}

\begin{lemma}\label{tek-1}
    We have $h(0)=h(1)=h'(1)=0$,  $h''(1)= \frac{p(p-2)(3\ln 3 - 4\ln 2)c}{9\ln 2} \geq 0$, and $\lim_{s \to \infty}h(s)=0$.
\end{lemma}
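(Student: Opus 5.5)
The plan is to verify each claim by direct evaluation, using the two building blocks
\[
A(s):=\frac{2(2s+1)^{p/2}}{s^p+2},\qquad B(s):=\frac{(s+2)^p}{s^p+2},
\]
so that $h=A^{c}-B^{1/(p-1)}+1$. I will not differentiate $h$ twice directly. For $h'(1)$ and $h''(1)$ I will use the displayed formula for $h'$, which writes it as $\frac{p}{s^p+2}\cdot R(s)$, where $R$ is the square bracket.

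\textbf{Values at $0$, at $\infty$ and at $1$.} At $s=0$ we have $A(0)=1$ and $B(0)=2^{p-1}$, hence $h(0)=1-2+1=0$. As $s\to\infty$, $A(s)\sim 2^{1+p/2}s^{-p/2}\to 0$; since $c>0$ this gives $A^c\to 0$, while $B(s)\to 1$. Hence $h(s)\to 0$.

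At $s=1$ the key observation is the definition of $c$. Since $\ln A(1)=\ln 2+(\tfrac p2-1)\ln 3=\tfrac12\big((p-2)\ln 3+2\ln 2\big)$, we get $c\ln A(1)=\ln 2$, that is, $A(1)^c=2$. Also $B(1)=3^{p-1}$, so $B(1)^{1/(p-1)}=3$. Therefore $h(1)=2-3+1=0$. For $h'(1)$, both factors $s^p+s^{p-1}-2$ and $s^{p-1}-1$ vanish at $s=1$, so $R(1)=0$ and $h'(1)=0$.

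\textbf{Second derivative at $1$.} Because $R(1)=0$, the product rule gives $h''(1)=\frac{p}{3}R'(1)$. When differentiating each term of $R$ at $s=1$, only the derivative of the vanishing factor survives. In the first term the prefactor equals $A(1)^c=2$, and
\[
\frac{d}{ds}\,\frac{s^p+s^{p-1}-2}{2s+1}\Big|_{s=1}=\frac{2p-1}{3}.
\]
In the second term the prefactor equals $B(1)^{1/(p-1)}=3$, and
\[
\frac{d}{ds}\,\frac{s^{p-1}-1}{(p-1)(s+2)}\Big|_{s=1}=\frac{1}{3}.
\]
Hence $R'(1)=2-\frac{2c(2p-1)}{3}$ and
\[
h''(1)=\frac{2p}{3}\Big(1-\frac{c(2p-1)}{3}\Big).
\]

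To match the stated form, write $D=(p-2)\ln3+2\ln2$, so that $c=2\ln2/D$. Then
\[
3D-2\ln 2\,(2p-1)=(p-2)(3\ln3-4\ln2),
\]
and using $2/D=c/\ln 2$ this gives exactly $h''(1)=\frac{p(p-2)(3\ln3-4\ln2)c}{9\ln2}$. This quantity is nonnegative because $p\ge 2$, $c>0$, and $27>16$.

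The only step needing care is this algebraic reduction of $h''(1)$, in particular applying the product rule correctly at a zero of the bracket; everything else is routine substitution.
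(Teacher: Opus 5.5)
Your proposal is correct and follows the paper's proof essentially step for step. Both use direct substitution at $s=0$, $s=1$ and $s\to\infty$, the identity $A(1)^c=2$ coming from the choice of $c$, and $h''(1)=\frac{p}{3}R'(1)$ (valid because the bracket vanishes at $s=1$), which gives $R'(1)=2-\frac{2c(2p-1)}{3}$. Your version adds the explicit simplification $3D-2\ln 2\,(2p-1)=(p-2)(3\ln 3-4\ln 2)$ with $D=(p-2)\ln 3+2\ln 2$, which the paper states without derivation. One small wording point: the second term's prefactor at $s=1$ is $2B(1)^{1/(p-1)}=6$, not $3$, but your value of $R'(1)$ already accounts for this.
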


\begin{proof}
 See Appendix~\ref{appendix:A}.
\end{proof}

\begin{lemma}\label{sign-1}
For each $p >3$ function $h'$ changes sign 3 times as follows: 
$$
(+-+-).
$$
More precisely from $+$ to $-$ at point $q_{1} \in (0,1)$; from $-$ to $+$ at point $1$; and from $+$ to $-$ at point $q_{2}>1$. 
\end{lemma}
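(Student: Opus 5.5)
The plan is to turn the sign of $h'$ into the sign of a simpler one-variable function, and then count its zeros. Write $A(s)=\frac{2(2s+1)^{p/2}}{s^p+2}$ and $B(s)=\frac{(s+2)^p}{s^p+2}$. From the displayed formula for $h'$, the prefactor $\frac{p}{s^p+2}$ is positive, so the sign of $h'(s)$ is the sign of
\[
2B^{\frac{1}{p-1}}\frac{s^{p-1}-1}{(p-1)(s+2)}-cA^{c}\frac{s^p+s^{p-1}-2}{2s+1}.
\]
The polynomial $s^p+s^{p-1}-2$ has exactly one positive root, namely $s=1$, and so does $s^{p-1}-1$. Both factors are negative on $(0,1)$ and positive on $(1,\infty)$. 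Therefore, for $s\neq 1$, the sign of $h'$ equals $\operatorname{sgn}(s-1)\cdot\operatorname{sgn}(G(s))$, where
\[
G(s)=\ln\frac{2(2s+1)(s^{p-1}-1)}{c(p-1)(s+2)(s^p+s^{p-1}-2)}+\frac{1}{p-1}\ln B(s)-c\ln A(s).
\]
The first term is well defined and continuous at $s=1$ because the ratio $\frac{s^{p-1}-1}{s^p+s^{p-1}-2}$ extends continuously there, with value $\frac{p-1}{2p-1}$. The claimed pattern $(+-+-)$ then becomes the following statement about $G$: $G<0$ on $(0,q_1)$, $G>0$ on $(q_1,q_2)$, and $G<0$ on $(q_2,\infty)$. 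In other words, $G$ has exactly two zeros, $q_1<1<q_2$, is positive between them and negative outside. In particular $G(1)>0$, which is consistent with $h''(1)\ge 0$ from Lemma~\ref{tek-1}, and must be checked directly from the closed form at $s=1$.

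First I would pin down the endpoint behaviour of $G$.
\begin{itemize}
\item As $s\to 0^+$, the ratio $\frac{s^{p-1}-1}{s^p+s^{p-1}-2}$ tends to $\tfrac12$. The constant limit of $G$ is then $\ln\frac{1}{2c(p-1)}+\frac{1}{p-1}\ln 2^{p-1}-c\ln 1$, and I need to show it is negative for $p>3$, i.e.\ $c(p-1)>1$.
\item As $s\to\infty$, the logarithmic terms behave like $\ln\frac{4}{c(p-1)}-\frac{1}{p-1}\cdot 0-c\left(\tfrac{p}{2}\ln(2s)-p\ln s\right)+\dots$. The $-c\ln A$ term grows like $+c\tfrac p2\ln s$, so this term alone pushes $G$ to $+\infty$, which contradicts the claim.
\end{itemize}
So I must redo the expansion carefully with the exact prefactors. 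At large $s$, $h'\approx 2B^{1/(p-1)}/((p-1)s)-cA^c s^{-1}\cdot\ldots$, and $B\to 1$ while $A\to 0$. That gives $h'<0$ eventually, which is compatible with the claim. The discrepancy is a sign bookkeeping error, and checking this limit is the first sanity test of whatever reduced function I use.

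Second, to show that $G$ has exactly two zeros, I would differentiate once more. The derivative $G'(s)$ is a rational function of $s$ and $s^{p}$ (or $s^{p-1}$); there is no exponent $c$ left, because the powers were logged away. Clearing denominators gives $G'=P(s)/Q(s)$ with $Q>0$, where $P$ is a finite generalized polynomial (exponents from $\{0,1,\dots\}\cup\{p-1,p,\dots\}$). I would then show that $P$ has exactly one positive root $s_*$, with $G$ increasing on $(0,s_*)$ and decreasing after. Combined with $G(0^+)<0$, $G(s_*)\ge G(1)>0$ and $G(\infty)<0$, this yields exactly two zeros, one on each side of $1$.

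To count the roots of $P$, I would use Descartes' rule of signs for generalized polynomials: the sign changes in its coefficient sequence, ordered by exponent, bound the number of positive roots. If that bound is not $1$, I would split $(0,1)$ and $(1,\infty)$ and on each interval prove monotonicity of a suitable ratio of terms. Alternatively, I would substitute $s=1/u$ on $(1,\infty)$ to exploit the near-symmetry between $s^p+2$ and $2s+1$.

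The main obstacle is this root count of $P$, uniformly in $p>3$. The coefficients depend on $p$ and $c(p)$, so Descartes may give two or three possible roots, and ruling out the extra ones requires ad hoc polynomial inequalities in $(s,p)$. Those are exactly the kind of inequalities the paper defers to Appendices~A and~B.
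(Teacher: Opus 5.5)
Your reduction is the same as the paper's: your $G$ is, term by term, exactly the paper's $\psi$. The data you want is also exactly what the paper uses: $G(0^+)=-\ln(c(p-1))<0$; $G(1)>0$, which is in fact equivalent to $h''(1)>0$; $G\to-\infty$; and a single $+\to-$ sign change of $G'$.

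Your $s\to\infty$ analysis, however, is wrong and you leave it unresolved. There is no sign error. You dropped the $-\ln s$ that comes from $\frac{s^{p-1}-1}{s^p+s^{p-1}-2}\sim s^{-1}$ in the first logarithm. With it, $G(s)=(\tfrac{cp}{2}-1)\ln s+O(1)\to-\infty$, and the input needed is $c<2/p$. Your heuristic for $h'<0$ at infinity is likewise not an argument. The actual reason is that $A^c\,\frac{s^p+s^{p-1}-2}{2s+1}\asymp s^{p-1-cp/2}$ dominates $\frac{s^{p-1}-1}{s+2}\asymp s^{p-2}$, because $cp/2<1$.

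The genuine gap is the step you yourself call the main obstacle. You give no proof that $G'$ changes sign exactly once, and Descartes' rule applied to the cleared numerator cannot supply it. The natural denominator $(p-1)(s+2)(2s+1)(s^{p-1}-1)(s^p+2)(s^p+s^{p-1}-2)$ vanishes to second order at $s=1$. So the numerator $P$ has a forced double zero there and cannot have ``exactly one positive root''. Moreover, using $\frac{5}{4p-3}<c<\frac{4}{3p-2}$, the coefficients of $P$ ordered by exponent have signs $(-,-,+,+,-,-,+,+,-,-,-,-,+,+)$. That is five sign changes, which leaves room for two spurious roots besides the double root at $1$ and the true crossing $s_*>1$. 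Your fallbacks (an unspecified monotone ratio, or the substitution $s=1/u$) are not carried out.

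The paper needs a whole cascade at this point:
\begin{itemize}
\item $P(0)<0$, $P'(0)<0$, $P(1)=P'(1)=0$, $P''(1)<0$ and $P(\infty)=+\infty$ reduce the claim to $P''=s^{p-4}Q$ having sign pattern $(+-+)$.
\item $Q(0)>0$, $Q(1)<0$ and $Q(\infty)=+\infty$ reduce that to $Q'$ having pattern $(+-+)$.
\item Given $Q'(0)>0$, $Q'(1)<0$ and $Q'(\infty)>0$, this follows from the key fact that $Q''(s)=0$ forces $Q'(s)<0$.
\end{itemize}
Only at that level does Descartes' rule actually work. The function $g=sQ''$ has three coefficient sign changes, and $W=(s\frac{d}{ds}-(p+1))g$, which kills the $s^{p+1}$ term, has two. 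Rolle's theorem interlaces their zeros. Explicit inequalities at $s_A=\frac{p-1}{p+1}$, $r=\frac{p}{p+1}$ and $1$ then give $Q'<0$ at each of the three zeros of $g$. These inequalities are $g(s_A)>0$, $g(r)<0$, $g(1)<0$, $W(r)<0$, $Q'(s_A)<0$, $H(r)>0$ and $Q'(1)<0$, with $H=sQ''-(p+1)Q'$. Without this cascade, or an equivalent argument, your proposal establishes only the easy reduction and the endpoint values, not the lemma.
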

Clearly Lemma~\ref{sign-1} and Lemma~\ref{tek-1} imply $h\geq 0$ on $[0, \infty)$ and hence the inequality \eqref{side1}. 

\vskip1cm

The proof of Lemma~\ref{sign-1} will be divided into several parts.

\vskip1cm

It is enough to study the sign of the expression 
\begin{align}\label{fac-01}
     -c \left( \dfrac{2(2s+1)^{p/2}}{s^{p}+2} \right)^{c} \dfrac{ s^{p} + s^{p-1} - 2 }{2s + 1} 
+ 2 \left( \dfrac{(s+2)^{p}}{s^{p}+2} \right)^{\frac{1}{p-1}} \dfrac{ s^{p-1} - 1 }{ (p-1)(s+2)}.
\end{align}

The fact that $h'(s)$ changes sign from $-$ to $+$ follows from $h''(1)>0$. 

Multiply the expression \eqref{fac-01} by the factor 
\begin{align}\label{fak21}
    \frac{(p-1)}{2} \frac{2s+1}{s^{p}+s^{p-1}-2} \left( \frac{s^{p}+2}{(2s+1)^{p/2}}\right)^{c}
\end{align}
Then the resulting expression takes the form 
\begin{align}\label{red1}
    -2^{c-1}c\cdot (p-1) + 
\frac{(s+2)^{\frac{1}{p-1}}\cdot\bigl(s^{p-1}-1\bigr)\cdot(2s+1)^{\,1-\frac{pc}{2}}\cdot\bigl(s^{p}+2\bigr)^{\,c-\frac{1}{p-1}}}{s^{p}+s^{p-1}-2}
\end{align}

Since the factor \eqref{fak21}  has sign $-$ on $(0,1)$, and sign $+$ on $(1, \infty)$ therefore the lemma is reduced to showing that the expression \eqref{red1} changes sign two times, namely, from $-$ to $+$ at point $q_{1} \in (0,1)$, and from $+$ to $-$ at point $q_{2}>1$. As $s \mapsto \log(s)$ is increasing on $(0, \infty)$  we can replace the expression \eqref{red1} by 
\begin{align}\label{red2}
   \psi(s):= -\log\left(2^{c-1}c\cdot (p-1) \right) + 
\log\left(\frac{(s+2)^{\frac{1}{p-1}}\cdot\bigl(s^{p-1}-1\bigr)\cdot(2s+1)^{\,1-\frac{pc}{2}}\cdot\bigl(s^{p}+2\bigr)^{\,c-\frac{1}{p-1}}}{s^{p}+s^{p-1}-2} \right),
\end{align}
and study its sign change.

\begin{figure}[H]
\centering
\includegraphics[width=1\textwidth]{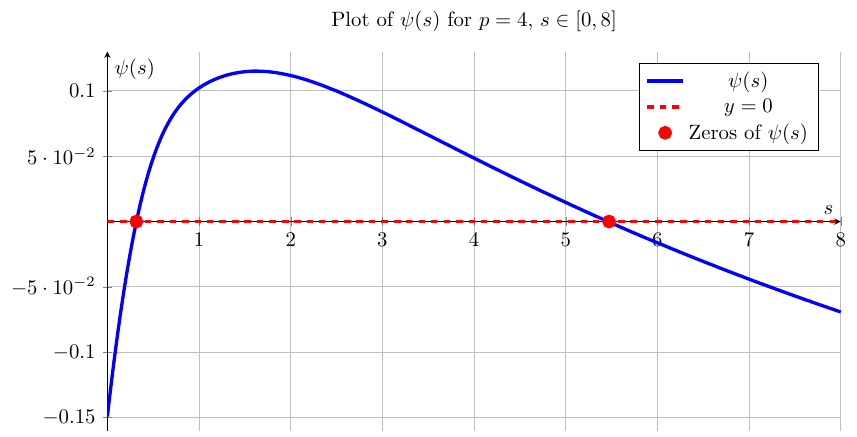}
\label{fig:p3}
\end{figure}

\begin{lemma}\label{tex01}
    We have $\psi(0)=-\log((p-1)c)<0$, $\psi(1) = \log\left(\frac{3}{(2p-1)c}\right)>0$, and $\lim_{s \to \infty} \psi(s) =-\infty$
\end{lemma}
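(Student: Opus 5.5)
The plan is direct evaluation: compute the limit of the argument of the second logarithm in \eqref{red2} at $s=0$, $s=1$ and $s=\infty$, and then check the sign conditions using the definition of $c=c(p)$. The one relation I will use repeatedly comes from clearing the denominator in $c(p)$:
\[
c(p-2)\ln 3 = 2\ln 2\,(1-c).
\]

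\textbf{At $s=0$.} The factors of the ratio
\[
R(s)=\frac{(s+2)^{\frac{1}{p-1}}(s^{p-1}-1)(2s+1)^{1-\frac{pc}{2}}(s^p+2)^{c-\frac{1}{p-1}}}{s^p+s^{p-1}-2}
\]
become $2^{1/(p-1)}$, $-1$, $1$, $2^{c-1/(p-1)}$ and $-2$ respectively. Hence $R(0)=2^{c-1}$ and $\psi(0)=-\log(c(p-1))$. Showing $\psi(0)<0$ means showing $c(p-1)>1$. After clearing denominators this is
\[
2\ln 2\,(p-1)>(p-2)\ln 3+2\ln 2,
\]
which is equivalent to $(p-2)(\ln 4-\ln 3)>0$. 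This holds for $p>2$.

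\textbf{At $s=1$.} Here $R$ has a removable $0/0$ singularity. By L'H\^opital,
\[
\frac{s^{p-1}-1}{s^p+s^{p-1}-2}\to\frac{p-1}{2p-1}.
\]
The remaining factors give $3^{\frac{1}{p-1}+1-\frac{pc}{2}+c-\frac{1}{p-1}}=3^{1-c(p-2)/2}$. By the relation above, $3^{c(p-2)/2}=2^{1-c}$, so this equals $3\cdot 2^{c-1}$. Therefore
\[
R(1)=\frac{3\cdot 2^{c-1}(p-1)}{2p-1},\qquad \psi(1)=\log\frac{3}{(2p-1)c}.
\]
Positivity requires $(2p-1)c<3$, i.e. $2\ln 2\,(2p-1)<3(p-2)\ln 3+6\ln 2$. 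This rearranges to $(p-2)\cdot 4\ln 2<(p-2)\cdot 3\ln 3$, i.e. $16<27$, which is true for $p>2$.

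\textbf{At $s\to\infty$.} The exponent of $s$ in $R$ is
\[
\tfrac{1}{p-1}+(p-1)+\bigl(1-\tfrac{pc}{2}\bigr)+p\bigl(c-\tfrac{1}{p-1}\bigr)-p=\tfrac{pc}{2}-1.
\]
Step~1 showed $c\le 2/p$, and the argument there gives strict inequality for $p>2$ because $\ln 3>\ln 2$. So the exponent is negative, $R(s)\to 0$, and $\psi(s)\to-\infty$.

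\textbf{Main obstacle.} The only delicate point is the $s=1$ limit. One must treat the removable singularity correctly and then recognize that the powers of $3$ collapse to $2^{c-1}$ through the defining relation of $c(p)$. That collapse is exactly what cancels the constant term $\log(2^{c-1}c(p-1))$ and yields the clean formula for $\psi(1)$.
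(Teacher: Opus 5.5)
Your proof is correct and follows the paper's argument. You evaluate the ratio directly at $s=0$, use L'H\^opital at $s=1$ together with the defining relation of $c(p)$ to collapse the powers of $3$ into $3\cdot 2^{c-1}$, and compute the exponent $\frac{pc}{2}-1$ at infinity; the only cosmetic difference is that you check $c(p-1)>1$, $(2p-1)c<3$ and $c<2/p$ directly from the formula for $c(p)$, whereas the paper reads them off from Lemma~\ref{lem: bounds for c} (e.g.\ via $c<\frac{4}{3p-2}<\frac{3}{2p-1}$).
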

\begin{proof}
 See Appendix~\ref{appendix:A}.
\end{proof}

\vskip1cm 

Thus to show that $\psi$ changes sign from $-$ to $+$ at $q_{1} \in (0,1)$, and from $+$ to $-$ at point $q_{2}>1$ it follows from Lemma~\ref{tex01} that one needs to show  $\psi'$ changes sign once from $+$ to $-$ on $(0, \infty)$. 

A direct differentiation shows that 
\begin{align*}
    \psi'(s) &=\frac{1}{(p-1)(s+2)}
+\frac{(p-1)s^{p-2}}{s^{p-1}-1}
+\frac{2-pc}{1+2s}
+\left(cp-\frac{p}{p-1}\right)\frac{s^{p-1}}{s^p+2}
-\frac{p s^{p-1}+(p-1)s^{p-2}}{s^p+s^{p-1}-2}\,\\
&= -\frac{P(s)}{(p-1)\,(s+2)\,(2s+1)\,\bigl(s^{p-1}-1\bigr)\,\bigl(s^{p}+2\bigr)\,\bigl(s^{p}+s^{p-1}-2\bigr)}, 
\end{align*}
where 
\begin{align*}
P(s):={}&(p-1)(2-cp)\,s^{3p}
+(-4cp^{2}+4cp+6p-2)\,s^{3p-1}
+(-5cp^{2}+5cp+5p+1)\,s^{3p-2} \\
&-2p(cp-c-1)\,s^{3p-3}
+p(c-2)(p-1)\,s^{2p+1}
+(8cp^{2}-8cp-3p^{2}-5)\,s^{2p} \\
&+(17cp^{2}-17cp+3p^{2}-15p-8)\,s^{2p-1}
+(10cp^{2}-10cp+2p^{2}-14p+4)\,s^{2p-2} \\
&-4p(cp-c+p-2)\,s^{p+1}
+(-16cp^{2}+16cp-6p^{2}+24p+4)\,s^{p} \\
&+(-16cp^{2}+16cp+6p^{2}+12p-8)\,s^{p-1}
+4(p-1)^{2}\,s^{p-2}
+4p(cp-c-2)\,s \\
&+(8cp^{2}-8cp-16p+12).
\end{align*}
Clearly $\mathrm{sign}(\psi') = - \mathrm{sign}(P(s))$. So the goal reduces to showing that $P$ changes sign once from $-$ to $+$ on $(0, \infty)$.

\begin{figure}[H]
\centering
\includegraphics[width=1\textwidth]{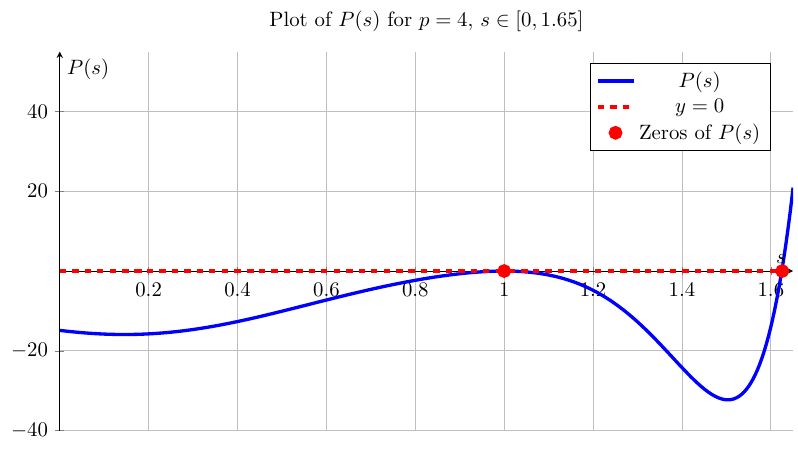}
\label{fig:p4}
\end{figure}

\begin{lemma}\label{lem: P(0), P'(0) etc}
For $p\geq 3$ we have $P(0)<0$, $P'(0)<0$, $P(1)=P'(1)=0$, $P''(1)<0$, and $\lim_{s \to \infty} P(s)=+\infty$. 
\end{lemma}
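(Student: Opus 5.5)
The plan is to check each of the five claims directly from the explicit formula for $P$. The only nontrivial facts about $c=c(p)$ we need are the bounds $0<c<2/p$ (shown in Step 1) and the sign of $cp-c-1$ and similar small combinations. We will also use the lower bound $c(p-1)\cdot 2^{c-1}>1$-type facts that follow from Lemma~\ref{tex01}, namely $(p-1)c>1$ and $(2p-1)c<3$.

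\textbf{Values at $0$.} For $p>3$ every exponent in $P$ except the constant and the linear term is at least $p-2>1$. Hence $P(0)=8cp(p-1)-16p+12$ and $P'(0)=4p(cp-c-2)$.

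For $P'(0)<0$ it suffices that $c(p-1)<2$. This holds because $cp<2$, so $c(p-1)<2-c<2$.

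For $P(0)<0$ we need $8cp(p-1)<16p-12$, i.e. $c<\frac{4p-3}{2p(p-1)}$. Since $c\le 2/p$, it is enough that $\frac{2}{p}\le \frac{4p-3}{2p(p-1)}$, i.e. $4(p-1)\le 4p-3$, which is true. The case $p=3$ is handled by continuity or by the same strict inequality.

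\textbf{Behaviour at infinity.} The leading term is $(p-1)(2-cp)s^{3p}$, and its coefficient is positive because $cp<2$. So $P(s)\to+\infty$.

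\textbf{The point $s=1$.} Here $P(1)$ is the plain sum of the coefficients, $P'(1)$ is the exponent-weighted sum, and $P''(1)$ is the sum weighted by $e(e-1)$. Each of these is a polynomial in $p$ and $c$ of low degree. We will collect terms and expect the identities $P(1)=0$ and $P'(1)=0$ to hold identically in $c$ and $p$.

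There is a structural reason for this. The denominator of $\psi'$ contains $(s^{p-1}-1)(s^p+s^{p-1}-2)$, which vanishes to second order at $s=1$. Meanwhile $\psi$ is smooth at $s=1$, since the ratio $(s^{p-1}-1)/(s^p+s^{p-1}-2)$ extends analytically. So $P$ must have a double zero at $1$. This gives a clean proof of $P(1)=P'(1)=0$ without any computation: $P=-\psi'\cdot D$, where $D$ has a double root at $1$ and $\psi'$ is finite there.

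The same factorization yields $P''(1)$. Writing $D(s)\sim D_2(s-1)^2$ with $D_2=(p-1)\cdot 3\cdot 3\cdot 3\cdot (p-1)(2p-1)$ near $s=1$, we get $P''(1)=-2D_2\,\psi'(1)$. Thus $P''(1)<0$ is equivalent to $\psi'(1)>0$.

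\textbf{Main obstacle: the sign of $\psi'(1)$.} This is the one genuinely quantitative step. We compute $\psi'(1)$ by Taylor-expanding the logarithmic terms at $s=1$; the singular pieces $\frac{(p-1)s^{p-2}}{s^{p-1}-1}-\frac{ps^{p-1}+(p-1)s^{p-2}}{s^p+s^{p-1}-2}$ combine to a finite limit. The result is an expression that is affine in $c$, say $A(p)-cB(p)$.

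We will then show $c<A(p)/B(p)$ using the explicit $c(p)=\frac{2\ln 2}{(p-2)\ln 3+2\ln 2}$. The approach is to clear denominators, which leaves an inequality of the form (polynomial in $p$)$\cdot\ln 2$ vs (polynomial in $p$)$\cdot\ln 3$ on $p\ge3$. We would prove it by checking it at $p=3$ and comparing leading coefficients, using for instance $3\ln3-4\ln2>0$ (compare Lemma~\ref{tek-1}). If that monotonicity is not immediate, we would differentiate in $p$ once more.

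As a fallback, we can compute $P''(1)$ directly as $\sum e(e-1)a_e$ from the listed coefficients. This gives a quadratic-in-$p$ expression linear in $c$, which is then bounded using the same two facts about $c$.
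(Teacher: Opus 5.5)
Your handling of $P(0)$, $P'(0)$ and of $s\to\infty$ matches the paper's: read off the constant, linear and leading coefficients and use $c<2/p$.

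There is one caveat at the endpoint. At $p=3$ exactly the term $4(p-1)^2s^{p-2}$ is linear, so $P'(0)=4p(cp-c-2)+4(p-1)^2=24c-8>0$, because $c(3)=\ln 4/\ln 12\approx 0.56$. Thus $P'(0)<0$ really requires $p>3$, which is the paper's standing assumption, and no continuity argument covers $p=3$.

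At $s=1$ you take a genuinely different route. The paper simply evaluates the weighted coefficient sums of the displayed $P$. You instead write $P=-\psi' D$ with
\[
D=(p-1)(s+2)(2s+1)(s^{p-1}-1)(s^p+2)(s^p+s^{p-1}-2).
\]
The ratio $(s^{p-1}-1)/(s^p+s^{p-1}-2)$ extends to $(p-1)/(2p-1)>0$, so $\psi$ is smooth across $s=1$ while $D$ has a double zero there. This gives $P(1)=P'(1)=0$ and $P''(1)=-54(p-1)^2(2p-1)\,\psi'(1)$. The argument is correct, and it explains why $s=1$ is a double root. The price is that it rests entirely on the asserted identity $\psi'=-P/D$. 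The paper's direct evaluation, by contrast, also checks the fourteen displayed coefficients, which the later lemmas rely on.

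The ``main obstacle'' you anticipate does not exist: at $s=1$ the dependence on $c$ cancels, so no inequality between $\ln 2$ and $\ln 3$ is needed. The regular terms of $\psi'$ give
\[
\frac{1}{3(p-1)}+\frac{2-pc}{3}+\frac13\Bigl(cp-\frac{p}{p-1}\Bigr)=\frac13 .
\]
Write $f=s^{p-1}-1$ and $g=s^p+s^{p-1}-2$. The singular pair $f'/f-g'/g$ tends to
\[
\frac{f''(1)}{2f'(1)}-\frac{g''(1)}{2g'(1)}=\frac{p-2}{2}-\frac{(p-1)^2}{2p-1}=-\frac{p}{2(2p-1)} .
\]
Hence $\psi'(1)=\frac{p-2}{6(2p-1)}>0$, and $P''(1)=-9(p-1)^2(p-2)$, which is exactly the paper's value. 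Your fallback $\sum e(e-1)a_e$ would likewise give this $c$-free cubic, not a $c$-dependent quadratic. As written, you stop before computing $\psi'(1)$, but the missing step is this two-line limit, not the transcendental estimate you planned.
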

\begin{proof}
 See Appendix~\ref{appendix:A}.
\end{proof}

Thus to obtain a sign change from $-$ to $+$ for $P$ it suffices to show  (thanks to the previous lemma) the following lemma 
\begin{lemma}\label{mm1}
$P''(s)$ changes sign as $(+-+)$ on $(0, \infty)$. 
\end{lemma}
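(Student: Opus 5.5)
The plan is to treat $P''$ as a generalized polynomial (a finite sum $\sum_e a_e s^e$ with real exponents $e$) and to squeeze its number of positive zeros between a lower bound from endpoint signs and an upper bound from a Descartes-type argument. Differentiating $P$ term by term, each monomial $a_e s^e$ becomes $a_e e(e-1)s^{e-2}$. The constant term and the $s$-term disappear, so
\[
P''(s)=\sum_{e} a_e\, e(e-1)\, s^{e-2},
\]
with exponents $e\in\{3p,3p-1,3p-2,3p-3,2p+1,2p,2p-1,2p-2,p+1,p,p-1,p-2\}$. Writing $P''(s)=s^{p-4}Q(s)$, it suffices to study the sign pattern of $Q$ on $(0,\infty)$.

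\textbf{Lower bound.} I would read off three signs.
\begin{itemize}
\item Near $0$, the dominant term is $4(p-1)^2(p-2)(p-3)s^{p-4}$, which is positive for $p>3$. So $Q(0)>0$.
\item At $s=1$, Lemma~\ref{lem: P(0), P'(0) etc} gives $P''(1)<0$.
\item At infinity, the leading term is $(p-1)(2-cp)\,3p(3p-1)s^{3p-2}$. It is positive because $c(p)<2/p$ for $p>3$ (shown in Step~1). So $Q\to+\infty$.
\end{itemize}
Hence $Q$ has at least one sign change $+\to-$ in $(0,1)$ and at least one $-\to+$ in $(1,\infty)$. It remains to show that $Q$ has at most two positive zeros, counted with multiplicity.

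\textbf{Upper bound.} For this I would use the generalized Descartes rule of signs: the number of positive zeros of $\sum a_e s^e$ is at most the number of sign changes of its coefficients, ordered by exponent. This is proved by repeatedly dividing by a power of $s$ and applying Rolle's theorem. The coefficients of $Q$ are $a_e e(e-1)$, and $e(e-1)>0$ for every surviving exponent since $e\ge p-2>1$. So the coefficient signs of $Q$ are exactly those of $P$, with the two lowest-order terms removed.
\begin{itemize}
\item \emph{Coefficient signs.} Each coefficient is a polynomial in $p$ and $c$. To fix its sign for $p>3$, I would use two-sided bounds on $c$: the upper bound $c\le 2/p$, and a lower bound such as $c\ge \frac{2\ln 2}{p\ln 3}$ (or $cp\to 2\ln2/\ln 3$). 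Substituting these into each coefficient should make the sign determinable, possibly after splitting $p$ into ranges such as $(3,4]$ and $[4,\infty)$.
\item \emph{Excess sign changes.} Inspection suggests the sign sequence is not simply $+,-,+$. For example, $(p-1)(2-cp)>0$, $-4cp^2+4cp+6p-2$ is likely positive, while $p(c-2)(p-1)<0$. If there are more than two sign changes, I would first try Rolle-based reductions. One option is to multiply $Q$ by a well-chosen power $s^{-\gamma}$ and differentiate, choosing $\gamma$ to kill a troublesome middle coefficient or to flip its sign, so that the derivative has fewer sign changes. Alternatively, coefficients of adjacent exponents can be grouped, e.g.\ $s^{3p-2}(A+Bs+Cs^2)$, whose sign on the relevant range is fixed by an elementary estimate. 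Comparing coefficients can also absorb a negative monomial into neighbouring positive ones on $(0,1)$ or on $(1,\infty)$ separately, using $s^a\le s^b$ there.
\end{itemize}

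\textbf{Fallback.} If a global Descartes count cannot be reached, I would localize. On $(0,1)$, I would show $Q$ has only one zero, by showing $Q'<0$ wherever $Q$ is small, or that $Q$ is decreasing on $(0,1)$ after a substitution. On $(1,\infty)$, I would set $s=1/u$ and run the same Descartes count for the reversed generalized polynomial on $(0,1)$.

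I expect the main obstacle to be this sign bookkeeping. The coefficients mix $c$, $p$ and $p^2$ and are close to zero in places, because $cp$ is near $2\ln 2/\ln 3\approx 1.26$. Certifying each sign for all $p>3$ will need careful use of the bounds on $c$, and the many one-variable polynomial inequalities this produces are the kind of routine checks the paper defers to its appendices.
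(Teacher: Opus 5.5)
Your lower-bound step is correct and matches the paper ($Q(0)>0$, $Q(1)=-9(p-1)^2(p-2)<0$, $Q\to+\infty$). The gap is the upper bound ``$Q$ has at most two positive zeros''. That bound is the entire content of the lemma, you leave it unproved, and the tool you propose cannot deliver it.

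Using $\frac{5}{4p-3}<c<\frac{4}{3p-2}$, one checks that for every $p>3$ the coefficients of $Q$, ordered by exponents $0,1,2,3,p,p+1,p+2,p+3,2p-1,2p,2p+1,2p+2$, have signs $(+,+,-,-,+,+,-,-,-,-,+,+)$. (The swap of $p+3$ and $2p-1$ for $p<4$ is harmless.) That is four sign changes. So the generalized Descartes rule gives at most four zeros, and your three endpoint signs do not exclude the pattern $(+-+-+)$.

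Your Rolle reduction cannot improve this. Passing from $Q$ to $(s^{-\gamma}Q)'$ multiplies the coefficient of $s^{\beta}$ by $\beta-\gamma$. This flips the signs of a prefix and possibly deletes one term, so it lowers the number of sign changes by at most one, while Rolle adds one to the zero count. Iterating, the bound never drops below four. So no argument using coefficient signs alone reaches two; magnitude information at interior points must enter.

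Your other suggestions do not close the gap either:
\begin{itemize}
\item Grouping adjacent monomials, or absorbing them via $s^a\le s^b$, only yields inequalities certifying the sign of $Q$ on a subinterval, not zero counts.
\item The substitution $s=1/u$ merely reverses the coefficient sequence, which has the same four sign changes. Descartes still counts zeros on all of $(0,\infty)$, not on $(1,\infty)$.
\end{itemize}

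The paper supplies the missing quantitative input one derivative lower. It shows that $Q'$ has pattern $(+-+)$, from $Q'(0)>0$, $Q'(1)<0$, $Q'\to+\infty$ and the key fact that $Q'<0$ at every zero of $Q''$. Your ``$Q'<0$ wherever $Q$ is small'' has this shape, but you give no mechanism for proving it. The paper's mechanism is as follows.
\begin{itemize}
\item For $g=sQ''$ the two lowest terms of $Q$ are gone, and the coefficient pattern $(-,-,+,+,-,-,-,-,+,+)$ has only three sign changes.
\item Explicit evaluations $g(s_A)>0$, $g(r)<0$, $g(1)<0$ at $s_A=\frac{p-1}{p+1}$ and $r=\frac{p}{p+1}$ then show that $g$ has exactly three simple zeros, with $s_1<s_A<s_2<r<1<s_3$.
\item At the local minima of $Q'$ one gets $Q'(s_1)<Q'(s_A)<0$ and $Q'(s_3)<Q'(1)<0$.
\item The local maximum $s_2$ is handled through $H=sQ''-(p+1)Q'$. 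Here $W=sH'=\bigl(s\frac{d}{ds}-(p+1)\bigr)g$ has only two coefficient sign changes.
\item The Rolle interlacing of the zeros of $W$ and $g$, together with $W(r)<0$, places both $s_2$ and $r$ in the interval where $H$ decreases. Then $H(r)>0$ gives $H(s_2)>0$, i.e.\ $Q'(s_2)<0$.
\end{itemize}
These interior evaluations, verified in the paper's appendices, are exactly what your plan lacks.
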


The proof of Lemma~\ref{mm1} will be split into several parts. 

\vskip0.3cm

First notice that we have  $P''(s) = s^{p-4}Q(s)$, where 

\begin{align*}
Q(s):={}&\,3p(p-1)(3p-1)(2-cp)\,s^{2p+2}
+(-4cp^{2}+4cp+6p-2)(3p-1)(3p-2)\,s^{2p+1} \\
&+(-5cp^{2}+5cp+5p+1)(3p-2)(3p-3)\,s^{2p}
-2p(3p-3)(3p-4)(cp-c-1)\,s^{2p-1} \\
&+p(c-2)(p-1)(2p+1)\,2p\,s^{p+3}
+(8cp^{2}-8cp-3p^{2}-5)\,2p(2p-1)\,s^{p+2} \\
&+(17cp^{2}-17cp+3p^{2}-15p-8)(2p-1)(2p-2)\,s^{p+1} \\
&+(10cp^{2}-10cp+2p^{2}-14p+4)(2p-2)(2p-3)\,s^{p} \\
&-4p(cp-c+p-2)(p+1)p\,s^{3}
+(-16cp^{2}+16cp-6p^{2}+24p+4)\,p(p-1)\,s^{2} \\
&+(-16cp^{2}+16cp+6p^{2}+12p-8)(p-1)(p-2)\,s \\
&+4(p-1)^{2}(p-2)(p-3).
\end{align*}

\begin{figure}[H]
\centering
\includegraphics[width=1\textwidth]{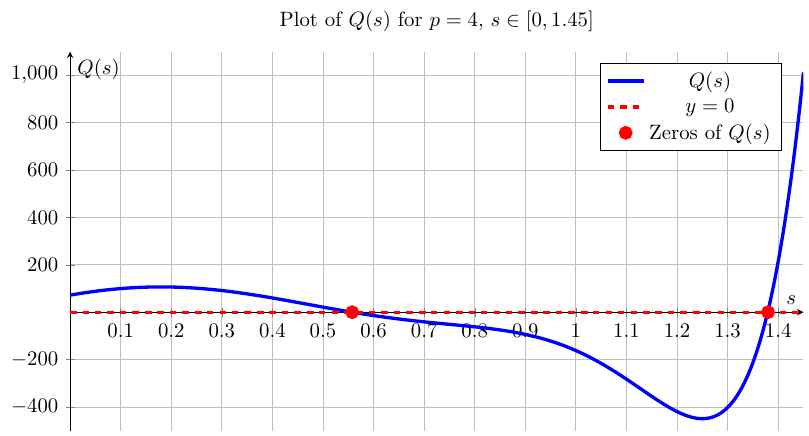}
\label{fig:p5}
\end{figure}

\begin{lemma}\label{dax91}
    We have $Q(0)>0$, and $\lim_{s \to \infty}Q(s)=+\infty$. 
\end{lemma}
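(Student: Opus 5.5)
Both claims follow by reading off the extreme terms of $Q$. Throughout I assume $p>3$, as agreed just before Step~1; for $p=3$ the constant term of $Q$ vanishes, so the first claim needs $p>3$.

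\textbf{The value $Q(0)$.} For $p>3$ every power of $s$ in $Q$ other than the constant has a strictly positive exponent. The smallest such exponent is $1$, and the ones involving $p$ ($p$, $p+1$, $p+2$, $p+3$, $2p-1,\dots,2p+2$) are all positive. Hence $Q(0)$ equals the constant term,
\begin{align*}
Q(0)=4(p-1)^{2}(p-2)(p-3),
\end{align*}
and each factor is strictly positive when $p>3$. So $Q(0)>0$.

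\textbf{The limit at infinity.} First I identify the top power. Since $p>3$, we have $2p+2>2p+1>2p>2p-1$, and $2p-1>p+3$ exactly when $p>4$. In every case $2p+2$ exceeds all the other exponents: $p+3$, $p+2$, $p+1$, $p$, $3$, $2$, $1$ and $0$. So as $s\to\infty$ the sign of $Q(s)$ is governed by the coefficient of $s^{2p+2}$, namely $3p(p-1)(3p-1)(2-cp)$.

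It remains to check that $2-cp>0$, i.e.\ $c(p)<2/p$ strictly. In Step~1 we reduced $c(p)\le 2/p$ to $(p-2)(\ln 3-\ln 2)\ge 0$. For $p>2$ this quantity is strictly positive, so the inequality is strict. Therefore the leading coefficient is positive and $Q(s)\to+\infty$ as $s\to\infty$.

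\textbf{Main point of care.} The argument is routine. The only steps needing attention are checking that no exponent equals $2p+2$ or ties with it, and upgrading the Step~1 inequality $c\le 2/p$ to a strict one; without strictness the leading coefficient could vanish.
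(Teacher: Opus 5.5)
Your proof is correct and follows the paper's own argument. $Q(0)$ is the constant term $4(p-1)^2(p-2)(p-3)>0$ for $p>3$, and the leading coefficient $3p(p-1)(3p-1)(2-cp)$ is positive because $c<2/p$. The only cosmetic difference is in how you get the strict inequality $c<2/p$: you take it directly from the Step~1 reduction $(p-2)(\ln 3-\ln 2)>0$, whereas the paper cites Lemma~\ref{lem: bounds for c}.
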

\begin{proof}
 See Appendix~\ref{appendix:A}.
\end{proof}

Since $\mathrm{sign}(Q) = \mathrm{sign}(P'')$, invoking Lemma~\ref{dax91} it suffices to show that $Q$ changes sign as $(+-+)$ on $(0, \infty)$. 
\vskip0.3cm 

Notice that $Q(1)=-9(p-1)^{2}(p-2)<0$, thus it suffices to show that $Q'$ has sign change $(+-+)$. 

\vskip0.3cm 

\begin{lemma}\label{lem: Q'(0),Q'(1),Q'(infinity)}
We have $Q'(0)>0$, $Q'(1)<0$, and $Q'(\infty)>0$. 
\end{lemma}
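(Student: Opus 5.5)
The three claims come from three different features of $Q$. $Q'(0)$ is a single coefficient, $Q'(\infty)$ is governed by the leading term, and $Q'(1)$ is an explicit expression that is affine in $c$. In all three I will use only the elementary bounds on $c=c(p)$: the strict inequality $c<2/p$ for $p>2$ from Step~1 of Section~\ref{mivedi}, and two-sided rational bounds on $c$ coming from $\ln 3/\ln 2\in(1.584,1.585)$. As elsewhere in this section I take $p>3$; the case $p=3$ then follows by continuity.

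\emph{The value $Q'(0)$.} For $p>3$ every monomial of $Q$ other than the constant and the $s$-term has exponent at least $2$. Here I use that $p>3$ makes the exponents $s^{p},s^{p+1},\dots$ exceed $1$. So all those monomials have zero derivative at $0$, and
\begin{align*}
Q'(0)=(p-1)(p-2)\bigl(6p^{2}+12p-8-16cp(p-1)\bigr).
\end{align*}
Using $c\le 2/p$ gives $16cp(p-1)\le 32(p-1)$. Hence the bracket is at least $6p^{2}-20p+24$, which is positive because its discriminant $400-576$ is negative. Therefore $Q'(0)>0$.

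\emph{The limit $Q'(\infty)$.} The top exponent of $Q$ is $2p+2$, with coefficient $3p(p-1)(3p-1)(2-cp)$. This coefficient is strictly positive since $c<2/p$ for $p>2$. Consequently $Q'(s)\sim (2p+2)\cdot 3p(p-1)(3p-1)(2-cp)\,s^{2p+1}\to+\infty$.

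\emph{The value $Q'(1)$.} This is the only genuinely computational step. Write $Q(s)=\sum_i q_i s^{e_i}$, so that $Q'(1)=\sum_i e_i q_i$. Every $q_i$ is affine in $c$, so $Q'(1)=A(p)+c\,B(p)$ with $A,B$ explicit polynomials in $p$. To keep the algebra manageable I will first use that $Q'(1)$ is determined by $P$: from $P''(s)=s^{p-4}Q(s)$ we get $P'''(1)=(p-4)Q(1)+Q'(1)$. Since $Q(1)=-9(p-1)^2(p-2)$ is already known, it suffices to compute $P'''(1)$, which may be shorter to organize. The relations $P(1)=P'(1)=0$ from Lemma~\ref{lem: P(0), P'(0) etc} can also be used to cancel terms.

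With $Q'(1)=A(p)+cB(p)$ in hand, I determine the sign of $B(p)$ on $[3,\infty)$. I then replace $c$ by the appropriate endpoint of a rational enclosure
\begin{align*}
\frac{2}{1.585(p-2)+2}\le c\le \frac{2}{1.584(p-2)+2},
\end{align*}
taking the upper bound if $B>0$ and the lower bound if $B<0$. After clearing denominators, $Q'(1)<0$ reduces to a single polynomial inequality in $p$ on $[3,\infty)$.

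I expect this final polynomial inequality to be the main obstacle. The crude bound $c\le 2/p$ is probably too weak near $p=3$, and near $p=3$ the leading and lower-order contributions nearly balance. If a plain Taylor expansion about $p=3$, or the substitution $p=3+u$ with a check that all coefficients in $u$ are negative, fails, I will split into $p\in[3,p_0]$ and $p\ge p_0$. On $[3,p_0]$ I would use sharper interval bounds on $c$, and for $p\ge p_0$ the dominant power of $p$ settles the sign.
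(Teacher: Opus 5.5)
Your proposal is correct. For $Q'(\infty)$ it is identical to the paper, and for $Q'(1)$ it follows the same strategy: explicit evaluation, affine in $c$, followed by a lower bound on $c$. The evaluation you left undone comes out as
\[
Q'(1)=-18(p-1)^2(2p-1)\bigl(cp(2p-1)-2(p+1)\bigr),
\]
so $B(p)=-18p(p-1)^2(2p-1)^2<0$, and the claim is exactly $c>\frac{2(p+1)}{p(2p-1)}$. The paper gets this from $c>\frac{5}{4p-3}$, since the comparison $\frac{5}{4p-3}>\frac{2(p+1)}{p(2p-1)}$ is equivalent to $(2p-3)(p-2)>0$. Your bound $c>\frac{2}{1.585(p-2)+2}$ reduces the claim to $(p-2)(0.415p-0.585)>0$. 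So the near-cancellation near $p=3$ and the case splitting you anticipated never arise.

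The genuine difference is in $Q'(0)$. The paper proves the sharper bound $c<\frac{3p^2+6p-4}{8p(p-1)}$. It does this by showing positivity of the auxiliary function $F(x)=(\ln 3)(3x^3-16x+8)+(\ln 4)(-5x^2+14x-4)$ on $(3,\infty)$, via $F''>0$, $F'>0$ and $F(3)>0$. Your observation that $c<2/p$ already suffices, because $6p^2-20p+24$ has negative discriminant, makes that detour unnecessary. Equivalently, $\frac{2}{p}<\frac{3p^2+6p-4}{8p(p-1)}$ because $3p^2-10p+12>0$. This shortens the argument and uses only a bound the paper has already established.

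One minor point: strict inequalities do not survive the limit $p\to 3$, so ``by continuity'' does not justify the case $p=3$. However, your arguments apply verbatim at $p=3$, and the paper only needs $p>3$ anyway.
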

\begin{proof}
 See Appendix~\ref{appendix:A}.
\end{proof}

Thus to conclude that $Q'$ has sign change $(+-+)$ it suffices to show the following key lemma 
\begin{lemma}\label{tek11}
If $Q''(s)=0$ then $Q'(s)<0$.   
\end{lemma}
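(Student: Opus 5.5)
The plan is to eliminate the critical points of $Q'$ by using the relation $Q''(s)=0$ to replace one of the high-degree monomials, and then check the sign of the resulting lower-complexity expression.

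Write $Q(s)=\sum_i a_i s^{e_i}$ with the exponents $e_i\in\{2p+2,2p+1,2p,2p-1,p+3,p+2,p+1,p,3,2,1,0\}$. Then
\[
Q'(s)=\sum_i a_i e_i s^{e_i-1},\qquad Q''(s)=\sum_i a_i e_i(e_i-1)s^{e_i-2}.
\]
At a point $s_0$ with $Q''(s_0)=0$ we may consider the combination $sQ'(s)-\frac{1}{\lambda}s^2Q''(s)$ for a well-chosen constant $\lambda$. This is
\[
\sum_i a_i e_i\Bigl(1-\tfrac{e_i-1}{\lambda}\Bigr)s^{e_i}.
\]
Choosing $\lambda=2p+1$ kills the top term $s^{2p+2}$. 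Alternatively, choosing $\lambda$ to kill the dominant negative block gives a sign-definite expression. At $s_0$ this combination equals $s_0Q'(s_0)$, so it suffices to show it is negative on the relevant range.

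First I would localize the zeros of $Q''$. From Lemma~\ref{lem: Q'(0),Q'(1),Q'(infinity)} and the explicit coefficients, I expect $Q''$ to have essentially one zero in $(0,1)$ region and one beyond $1$. I would split into $s\in(0,1]$ and $s\ge 1$. On $(0,1]$ the low-order terms ($s^0,\dots,s^3$ and $s^p,\dots$) dominate, while on $[1,\infty)$ I would substitute $s=1/u$ and multiply by $u^{2p+2}$ so the roles reverse. In each regime I would choose $\lambda$ to annihilate the single largest positive coefficient of that regime:
\begin{itemize}
\item the constant $4(p-1)^2(p-2)(p-3)$ near $0$, via $\lambda\to\infty$, i.e.\ using $Q'$ combined with $s Q''$ scaled so that the linear term cancels;
\item the $s^{2p+2}$ term near $\infty$.
\end{itemize}
The leftover is then a generalized polynomial whose coefficients, evaluated with $c=c(p)$, should all be nonpositive, or else grouped into pairs $A s^{a}-B s^{b}$ with $a<b$ that are nonpositive by AM--GM or monotonicity on the interval. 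These coefficient sign checks in $p$ (with $c(p)\in(0,2/p]$ and bounds such as $cp\le 2$, $c(p)(p-1)\ge$ explicit constants) are the routine but lengthy verifications, done as in Appendix~\ref{appendix:A}.

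The main obstacle is the middle range around $s=1$, where no single term dominates and $Q'(1)<0$ holds only with moderate margin. There I would instead use a Taylor/convexity argument: bound $Q'''$ on $[q,1/q]$ for an explicit $q$, show $Q'$ is negative on that interval directly via $Q'(1)<0$ together with $|Q''|$ bounds, and apply the elimination trick only outside it. If the coefficient inequalities in $p$ fail for large $p$, I would rescale by $p^{-3}$ and check the limits as $p\to\infty$ using $c(p)\sim \frac{2\ln 2}{p\ln 3}$ separately.
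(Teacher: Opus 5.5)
There is a genuine gap: the proposal never controls the zeros of $Q''$, and the picture you assume for them is wrong. The $s^{2}$-coefficient of $Q$ is negative, so $Q''(0)<0$, while $Q''\to+\infty$. Hence the count you expect (one zero in $(0,1)$, one beyond $1$) is impossible for simple zeros, by parity. In fact $g=sQ''$ has exactly three simple positive zeros $s_1<s_A<s_2<r<1<s_3$, where $s_A=\frac{p-1}{p+1}$ and $r=\frac{p}{p+1}$. So $Q'$ has local minima at $s_1,s_3$ and a local \emph{maximum} at $s_2$.

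The minima are cheap, because one negative value of $Q'$ in the adjacent monotonicity interval suffices: $Q'(s_A)<0$ with $s_A\in(s_1,s_2)$, and $Q'(1)<0$ with $1\in(s_2,s_3)$. The entire content of the lemma is the maximum at $s_2$, which lies in the window $(1-\frac{2}{p+1},1-\frac{1}{p+1})$; your plan never identifies it. Establishing this structure needs an actual counting tool. The paper applies the generalized Descartes rule to the pseudopolynomial $g$, whose ten coefficients have signs $(-,-,+,+,-,-,-,-,+,+)$, and then uses $g(s_A)>0$, $g(r)<0$, $g(1)<0$.

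Your elimination step cannot replace this localization. For every $\lambda$, the $s$-coefficient of $sQ'-\lambda^{-1}s^{2}Q''$ equals the $s$-coefficient of $Q$, i.e.\ $Q'(0)>0$. No $\lambda$ cancels it, and the constant term of $Q$ never enters. So this expression is positive near $0$ and can be negative only on a range you would first have to locate.

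The middle-range step also fails as described. Because $Q''(1)<0$ and $s_2<1$, the first-order term $Q''(1)(s_2-1)$ is positive and not small. For $p=4$ one has $Q'(1)\approx-943$, $Q''(1)\approx-5662$ and $s_2\approx 0.75$, so $Q'(1)+|Q''(1)|(1-s)>0$ on all of $(s_A,r)=(0.6,0.8)$. Hence any upper bound built from $Q'(1)<0$ plus bounds on $|Q''|$ or $|Q'''|$ is positive at $s_2$. The true value $Q'(s_2)\approx-200$ comes from a strongly negative $Q'''$: $Q''$ falls from $0$ at $s_2$ to $Q''(1)$ within distance $\frac{2}{p+1}$. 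Checking the $p\to\infty$ limit would not cover finite $p$ either.

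The paper gets around this with exactly your combination for $\lambda=p+1$, since $sQ'-\frac{1}{p+1}s^{2}Q''=-\frac{s}{p+1}H$ with $H=sQ''-(p+1)Q'$. But it needs the sign of $H$ only at one explicit point:
\begin{itemize}
\item $W=sH'=(s\frac{d}{ds}-(p+1))g$ has coefficient signs with two changes. Together with its positivity near $0$ and $\infty$ and $W(r)<0$, it has exactly two zeros $u_1<r<u_2$, and $H$ decreases on $(u_1,u_2)$.
\item Rolle applied to $s^{-(p+1)}g$ gives $s_1<u_1<s_2<u_2<s_3$, and $g(r)<0$ forces $s_2<r$.
\item Therefore $-(p+1)Q'(s_2)=H(s_2)>H(r)>0$.
\end{itemize}
The whole lemma thus reduces to explicit one-variable inequalities at $s_A$, $r$ and $1$, which are proved in the appendices.
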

Clearly if the lemma is true then it implies that $Q'$ has $(+-+)$ sign change. 

\vskip0.5cm

\begin{figure}[H]
\centering
\includegraphics[width=1\textwidth]{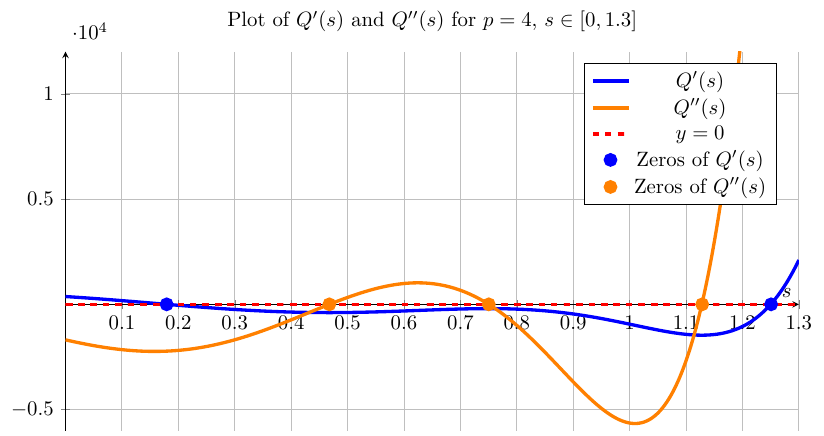}
\label{fig:p6}
\end{figure}

The proof of Lemma~\ref{tek11} will be split into several technical lemmas. 
\begin{lemma}\label{lem: bounds for c}
For any $p>3$ we have 
\begin{align*}
    \frac{5}{4p-3}<c(p)< \frac{4}{3p-2}.
\end{align*}
In particular, we have $\frac{1}{p-1}<c(p)<2/p$. 
\end{lemma}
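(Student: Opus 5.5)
Set $L=\ln 3/\ln 2$, so that $c(p)=\frac{2}{(p-2)L+2}$. Every inequality in the statement then becomes an explicit linear inequality in $p$ after cross-multiplying positive denominators.

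\emph{Lower bound.} $c>\frac{5}{4p-3}$ is equivalent to $2(4p-3)>5\big((p-2)L+2\big)$, i.e. $8p-16>5L(p-2)$, i.e. $(p-2)(8-5L)>0$. For $p>3$ this holds if and only if $L<8/5$, i.e. $3^5<2^8$, i.e. $243<256$.

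\emph{Upper bound.} $c<\frac{4}{3p-2}$ is equivalent to $2(3p-2)<4\big((p-2)L+2\big)$, i.e. $6p-12<4L(p-2)$, i.e. $(p-2)(6-4L)<0$. This holds if and only if $L>3/2$, i.e. $3^2>2^3$, i.e. $9>8$.

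Both comparisons are elementary integer inequalities, so no numerical approximation of logarithms is needed.

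\emph{Consequences.}
\begin{itemize}
\item $\frac{1}{p-1}<\frac{5}{4p-3}$ is equivalent to $4p-3<5p-5$, i.e. $p>2$.
\item $\frac{4}{3p-2}\le\frac{2}{p}$ is equivalent to $4p\le 6p-4$, i.e. $p\ge 2$.
\end{itemize}
Hence, for $p>3$,
\[
\frac{1}{p-1}<\frac{5}{4p-3}<c(p)<\frac{4}{3p-2}\le \frac{2}{p},
\]
which gives the stated chain, with strict inequality $c(p)<2/p$.

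There is no real obstacle. The only care needed is to notice that the sign of $p-2$ makes the direction of each reduction correct, and to choose the integer comparisons $3^5<2^8$ and $3^2>2^3$ that certify $3/2<\log_2 3<8/5$.
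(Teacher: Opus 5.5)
Your proof is correct and follows essentially the same route as the paper: you cross-multiply each bound into a linear inequality in $p$ and certify the signs of the constants with $3^5<2^8$ and $3^2>2^3$ (the paper uses the equivalent $2^6<3^4$). Your factorization through $(p-2)$ is slightly cleaner than the paper's, and you also explicitly verify the ``in particular'' chain $\frac{1}{p-1}<\frac{5}{4p-3}$ and $\frac{4}{3p-2}\le\frac{2}{p}$, which the paper leaves implicit.
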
 
\begin{proof}
 See Appendix~\ref{appendix:A}.
\end{proof}

Next let $g(s):=sQ''(s)$, and set $H(s):=sQ''(s)-(p+1)Q'(s)$. 
\begin{remark}
    Notice that the statement ``if $Q''(s)=0$ then $Q'(s)<0$'' is equivalent to the statement ``if $g(s)=0$ for some $s\neq0$ then $H(s)>0$ (or equivalently $Q'(s)<0$)''. 
\end{remark}

Our first goal will be the following. 
\begin{lemma}\label{lem:g-at-most-3}
    For $p>3$ function $g$ has at most 3 positive real zeros (counted with multiplicities). 
\end{lemma}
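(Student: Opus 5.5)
The plan is to apply the generalized Descartes rule of signs (Laguerre's version for real exponents). It says that a finite sum $\sum_i a_i s^{\lambda_i}$ with distinct real exponents $\lambda_i$ has at most as many positive zeros, counted with multiplicity, as there are sign changes in the coefficient sequence once the terms are ordered by exponent. So I would write $g(s)=sQ''(s)$ explicitly as such a sum, fix the ordering of its exponents for $p>3$, and count sign changes using Lemma~\ref{lem: bounds for c}.

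\textbf{Step 1: the terms of $g$.} A term $a s^{k}$ of $Q$ contributes $a\,k(k-1)s^{k-1}$ to $g$. The constant term and the linear term of $Q$ therefore disappear. The ten surviving exponents are
\[
2p+1,\ 2p,\ 2p-1,\ 2p-2,\ p+2,\ p+1,\ p,\ p-1,\ 2,\ 1.
\]
For $p>3$ these are strictly decreasing in the order listed; the only nontrivial comparison is $p-1>2$. Every multiplier $k(k-1)$ is positive, since each surviving exponent $k$ of $Q$ is at least $2$. Hence the signs of the coefficients of $g$ are exactly the signs of the corresponding coefficients of $Q$, taken in the order $s^{2p+2},s^{2p+1},\dots,s^{3},s^{2}$.

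\textbf{Step 2: the sign pattern.} Using only $\frac{5}{4p-3}<c<\frac{4}{3p-2}$ and its consequences $\frac1{p-1}<c<\frac2p$:
\begin{itemize}
\item \emph{$s^{2p+2}$:} the factor $2-cp$ is positive, so this coefficient is positive.
\item \emph{$s^{2p+1}$:} the bound $c<\frac{4}{3p-2}$ gives $4cp(p-1)<6p-2$, because $(6p-2)(3p-2)-16p(p-1)=2p^2-2p+4>0$. The coefficient is positive.
\item \emph{$s^{2p}$:} the bound $c>\frac{5}{4p-3}$ gives $5cp(p-1)>5p+1$, because $25p(p-1)-(5p+1)(4p-3)=5p^2-14p+3>0$ for $p\ge3$. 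The coefficient is negative.
\item \emph{$s^{2p-1}$:} we have $c(p-1)>1$, so the coefficient is negative.
\item \emph{$s^{p+3}$:} the factor $c-2$ is negative, so the coefficient is negative.
\item \emph{$s^{p+2}$:} the bound $c<\frac4{3p-2}$ gives $8cp(p-1)<3p^2+5$, because $(3p^2+5)(3p-2)>32p(p-1)$ for $p\ge 3$, which is an elementary cubic check. The coefficient is negative.
\item \emph{$s^{p+1}$ and $s^{p}$:} the bound $c>\frac1{p-1}$ bounds these coefficients below by $3p^2+2p-8>0$ and $2p^2-4p+4>0$ respectively. Both are positive.
\item \emph{$s^{3}$:} the coefficient is $-4p(p+1)p\,(c(p-1)+p-2)<0$.
\item \emph{$s^{2}$:} the bound $c>\frac1{p-1}$ makes the coefficient less than a positive multiple of $-6p^2+8p+4<0$. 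It is negative.
\end{itemize}
The resulting pattern is $(+,+,-,-,-,-,+,+,-,-)$, which has exactly three sign changes. By the generalized Descartes rule, $g$ has at most three positive zeros counted with multiplicity.

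\textbf{Main obstacle.} The only delicate points are the coefficients whose sign is not obvious: $s^{2p+1}$, $s^{2p}$ and $s^{p+2}$. Each one needs the right one-sided bound from Lemma~\ref{lem: bounds for c}. The crude bounds $\frac1{p-1}<c<\frac2p$ alone do not settle $s^{2p}$, which is precisely why the sharper lower bound $\frac{5}{4p-3}$ is needed. Each of these checks is an elementary polynomial inequality in $p$ valid for $p\ge3$, which I would verify directly by expanding. The hypothesis $p>3$ is used only to make $p-1>2$, so that the exponent ordering, and hence the sign sequence, is as stated.
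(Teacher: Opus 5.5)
Your proposal is correct and follows the paper's own argument. You use the same ten-monomial expansion of $g=sQ''$ and the same coefficient signs, with your pattern $(+,+,-,-,-,-,+,+,-,-)$ being the paper's $(-,-,+,+,-,-,-,-,+,+)$ read in decreasing exponent order. You obtain these signs from the same bounds $\frac{5}{4p-3}<c<\frac{4}{3p-2}$ and then apply the generalized Descartes rule.

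One small slip: your exponents are not strictly decreasing in the listed order for all $p>3$. For $3<p<4$ one has $2p-2<p+2$, and at $p=4$ the two exponents coincide, so those terms must be merged. Both coefficients are negative, so the count of three sign changes is unaffected, exactly as the paper remarks.
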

\begin{proof}
 See Appendix~\ref{appendix:A}.
\end{proof}

\vskip0.3cm 

Our second goal will be to show that $g$ has exactly 3 simple positive  zeros (here simple means that multiplicity is one) with sign pattern
$$
(-+-+).
$$
Indeed, this follows from the previous lemma and the following technical observation. Before we state that observation we denote 
\begin{align*}
    s_{A} = \frac{p-1}{p+1}, \quad r = \frac{p}{p+1} \quad \text{so that} \quad 0<s_{A}<r<1. 
\end{align*}

\begin{lemma}[$g$ changes sign 3 times]\label{lem: g changes sign 3 times}We have

\begin{itemize}
\item[\textup{(i)}]The inequality \begin{align*}
  g(s)<0 \quad \text{holds in a neighborhood of zero;} \quad g(1)<0; \quad \lim_{s \to \infty} g(s) = +\infty.  
\end{align*}
\item[\textup{(ii)}]Moreover, we have\begin{align*}
 \quad g(s_{A})>0; \quad  g(r)<0.  
\end{align*}
\end{itemize}
\end{lemma}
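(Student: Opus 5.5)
The plan is to compute $g(s)=sQ''(s)$ explicitly as a generalized polynomial and check each claimed sign at a few evaluation points. Writing $Q(s)=\sum_i q_i s^{e_i}$ with the exponents $e_i\in\{2p+2,2p+1,2p,2p-1,p+3,p+2,p+1,p,3,2,1,0\}$, we get
\[
g(s)=\sum_i q_i e_i(e_i-1)s^{e_i-1}.
\]
Every check reduces to the sign of a finite sum whose coefficients are explicit in $p$ and $c=c(p)$. Throughout we use Lemma~\ref{lem: bounds for c}, namely $\frac{5}{4p-3}<c<\frac{4}{3p-2}$. Each coefficient is affine in $c$, so it can be bounded by its value at one of these two endpoints, which turns everything into rational-function inequalities in $p$ (with powers of $s_A,r$ where needed).

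For part (i), near zero the dominant term of $g$ is the lowest exponent present. The constant term of $Q$ disappears under $sQ''$, and the linear term contributes nothing to $Q''$. So the lowest-order term is $2q_2 s$, where
\[
q_2=(-16cp^2+16cp-6p^2+24p+4)\,p(p-1).
\]
Factoring, this is $-16cp(p-1)-6p^2+24p+4$ times $p(p-1)$. For $p>3$ we have $-6p^2+24p+4<0$ once $p\ge 4.2$. For $p\in(3,4.2)$ we instead use $16cp(p-1)>16\cdot\frac{5p(p-1)}{4p-3}$, which dominates, so $q_2<0$ in all cases. All other exponents exceed $1$ because $p>3$: in particular $p-1>2$, so the $s^{p}$ and $s^{p+1}$ terms are of higher order. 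Hence $g<0$ near $0$.

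Also $g(1)=Q''(1)$, which is a number in $p,c$. We evaluate it by summing $q_ie_i(e_i-1)$ and show negativity via the $c$-bounds. If convenient, we can use the known values $Q(1)$ and $Q'(1)<0$ (Lemma~\ref{lem: Q'(0),Q'(1),Q'(infinity)}) to simplify, since $P(1)=P'(1)=0$ yields linear relations among the coefficients. As $s\to\infty$, the leading term is $(2p+2)(2p+1)\,3p(p-1)(3p-1)(2-cp)\,s^{2p+1}$, which tends to $+\infty$ since $c<2/p$.

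For part (ii), we substitute $s_A=\frac{p-1}{p+1}$ and $r=\frac{p}{p+1}$. The structure suggests these points were chosen so that the factors $(e_i-1)$ and $s$-powers combine nicely. We group the terms into the high block (exponents near $2p$), the middle block (near $p$) and the low block (degrees $\le 3$). The high-power terms are small there because $s_A^{p}=(1-\frac{2}{p+1})^{p}\approx e^{-2}$ and $r^{p}\approx e^{-1}$. We bound these powers by elementary estimates valid for $p>3$, namely $(1-\frac{2}{p+1})^{p}\in[e^{-2}\cdot\text{const},\,\ldots]$ via $\ln(1-x)\le -x$ and $\ln(1-x)\ge -x-x^2$. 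Each block then becomes an expression in $p$ with $c$ replaced by the appropriate endpoint, and we show the total is positive at $s_A$ and negative at $r$.

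The main obstacle is (ii): the signs at $s_A$ and $r$ come from delicate cancellations between blocks, and the exponential factors $s_A^p,r^p$ are not polynomial in $p$. I would first treat large $p$ asymptotically, expanding in $1/p$ with $c\sim\frac{2\ln2}{p\ln 3}$, to confirm that the margins are of definite sign. I would then cover a compact range such as $p\in(3,P_0]$ by interval subdivision with monotonicity bounds in $p$ on each piece, which is the kind of check the paper delegates to computer verification.
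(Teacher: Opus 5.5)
Your part (i) is sound. Near $0$ the term $2q_2s$ with $q_2<0$ dominates; this is the paper's $A_1<0$. The leading coefficient gives $g\to+\infty$. For $g(1)=Q''(1)=-6p(p-1)^2\bigl(8c(2p-1)(3p^2-2p+1)-(54p^2-p+2)\bigr)$, your plan of using only $c>\frac{5}{4p-3}$ works, since it reduces to $24p^3-114p^2+149p-34>0$ for $p\ge 3$. That is slightly simpler than the paper's direct comparison of $c(p)$ with a rational function via $\ln 2,\ln 3$.

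\textbf{The gap is part (ii).} You describe a strategy but never establish either sign, and the tools you name would not close it, because the margins are thin near $p=3$.

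At $s_A$: up to a positive factor, $Q''(s_A)$ is a quadratic in $\rho=s_A^p$. At $p=3$, $c=5/9$ it is about $11563\rho^2+13200\rho-1717$, with positive root $\approx 0.118$; note $c(3)\approx 0.558$ is barely larger than $5/9$. The true value is $\rho=1/8$. Your bound $\ln(1-x)\ge -x-x^2$ only gives $s_A^p\ge e^{-9/4}\approx 0.105$, where the quadratic is negative, so it certifies nothing.

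At $r$: $\ln(1-x)\le -x$ gives $r^p\le e^{-3/4}\approx 0.472$ at $p=3$, against the true $27/64$. The quadratic equal to $Q''(r)$ at $c=5/(4p-3)$ is about $8609x^2+130x-1744$. It is $\approx -157$ at $27/64$ but $\approx +239$ at $e^{-3/4}$.

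Your fallback does not rescue this. An expansion in $1/p$ proves nothing for any specific $p>P_0$ without explicit uniform remainder bounds. The interval subdivision on $(3,P_0]$ is left unspecified.

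The missing ideas are the following.
\begin{itemize}
\item Since $s_A$ and $r$ are rational in $p$, every monomial $s^\alpha$ of $g$ with $\alpha\in\{1,2,p-1,\dots,2p+1\}$ equals a rational function of $p$ times $1$, $\rho$ or $\rho^2$, where $\rho=s^p$. So $g(s_A)$ and $g(r)$ are exact quadratics in $\rho$, with coefficients rational in $p$ and affine in $c$.
\item $\bigl(\frac{p-1}{p+1}\bigr)^p$ is increasing in $p$ and $\bigl(\frac{p}{p+1}\bigr)^p$ is decreasing. Their values at $p=3$ therefore give sharp $p$-independent ranges: $s_A^p\in(\frac18,e^{-2})\subset(\frac18,\frac17)$ and $r^p\in(\frac{9}{25},\frac{27}{64}]$.
\item Next one fixes the $c$-dependence. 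At $s_A$ the $c$-coefficient is concave in $\rho$ and positive at both endpoints, so the expression increases in $c$ and one may set $c=\frac{5}{4p-3}$. At $r$ the expression is affine in $c$, so one checks $c=\frac{5}{4p-3}$ and $c=\frac{2}{p}$.
\item Monotonicity or concavity in $\rho$ then reduces everything to endpoint values. Each is a single polynomial in $p$, whose sign for all $p>3$ is certified by shifting to $q=p-3$ and checking coefficient signs (of it or of its derivative).
\end{itemize}
This covers the whole range $p>3$, including $p\to\infty$, with no asymptotics or numerics.
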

\begin{proof}
 See Appendix~\ref{appendix:A} for the proof of {(i)} and Appendix~\ref{appendix:B} for the proof of {(ii)}.
\end{proof}


  

Clearly these two lemmas combined with the previous lemmas show that $g$ changes sign as $(-+-+)$.

\vskip0.3cm

Next we need to learn how to control the monotonicity of $H$. 

\vskip0.3cm

Let $W= sH'$. Observe that zeros of $W$ and $g$ interlace in the following sense:  between any two positive consecutive  zeros of $g=sQ''(s)$ there is at least one zero of $W$.  This follows from the observation that
$$
W = \left(s\frac{d}{ds}-(p+1)\right)g(s).
$$
Therefore, if we apply the Rolle's theorem to $s^{-(p+1)}g(s)$ we will obtain the desired claim. 

\vskip1cm 
Since $g$ has exactly 3 positive zeros, it follows that $W$ has at least two different positive zeros.  

\begin{lemma}\label{lem: W has two zeros}
$W$ has exactly two positive simple zeros $0<u_{1}<u_{2}$. Moreover, $W>0$ on $(0, u_{1})$; $W<0$ on $(u_{1}, u_{2})$; and $W>0$ on $(u_{2}, \infty)$. 
\end{lemma}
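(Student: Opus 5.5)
Here $W=sH'$ with $H=sQ''-(p+1)Q'$, and we use the identity $W=(s\frac{d}{ds}-(p+1))g$ with $g=sQ''$. The interlacing remark already gives, via Rolle applied to $s^{-(p+1)}g$ and Lemma~\ref{lem: g changes sign 3 times}, at least one zero of $W$ in each of the two intervals between consecutive positive zeros of $g$. So it remains to show two things: $W$ has at most two positive zeros counted with multiplicity, and the boundary signs are $W>0$ near $0$ and $W\to+\infty$. Together these force exactly two simple zeros $u_1<u_2$ with the stated sign pattern.

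\textbf{Upper bound on the number of zeros.} Since $Q$ is a generalized polynomial with exponents $2p+2,2p+1,2p,2p-1,p+3,p+2,p+1,p,3,2,1,0$, the operator $s\frac{d}{ds}-(p+1)$ acts on each monomial $s^{\gamma}$ by multiplication by $(\gamma-(p+1))$. Hence $W$ has the same monomials as $g$, with coefficients multiplied by $(\gamma-p-1)$. The term with $\gamma=p+1$ is killed, and the signs flip for $\gamma<p+1$. I would write the coefficients of $W$ explicitly in terms of $p$ and $c$. Then I would apply the generalized Descartes rule of signs (valid for real exponents), as was presumably done for $g$ in Lemma~\ref{lem:g-at-most-3}, using the bounds $\frac{5}{4p-3}<c<\frac{4}{3p-2}$ from Lemma~\ref{lem: bounds for c} to fix the signs of the coefficients.

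If the raw sign sequence has more than two changes, which is likely, I would fall back on the approach used for $g$. One option is to divide by a suitable power $s^{\gamma_0}$ and differentiate once more, removing a block of terms. Each differentiation adds at most one zero, so Rolle's theorem bounds the zeros of $W$ by those of a simpler expression whose sign sequence can be controlled. Another option is to group adjacent monomials whose coefficients have the same sign under the $c$-bounds.

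\textbf{Boundary behaviour.} Near $0$, the lowest-order surviving term of $g$ is the linear term $s\cdot Q''$-type contribution. Lemma~\ref{lem: g changes sign 3 times}(i) says $g<0$ near zero, and multiplying that lowest exponent $\gamma<p+1$ by $(\gamma-p-1)<0$ gives $W>0$ near $0$. Near $\infty$ the leading exponent $2p+2>p+1$ has a positive coefficient (since $g\to+\infty$), so $W\to+\infty$. A count of two zeros with these end signs gives the pattern $(+,-,+)$, and simplicity follows from the multiplicity count.

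\textbf{Main obstacle.} I expect the main difficulty to be the Descartes count: showing that the coefficients of $W$, which are polynomials in $p$ and $c$, have at most two sign changes for every $p>3$. This likely needs several polynomial inequalities in $(p,c)$ on the region cut out by Lemma~\ref{lem: bounds for c}, checked case by case in an appendix.
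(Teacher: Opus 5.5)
Your overall plan is the paper's. Writing $g=\sum_\alpha A_\alpha s^\alpha$, you express $W=\sum_{\alpha}A_\alpha(\alpha-(p+1))s^{\alpha}$, use the generalized Descartes rule to get at most two positive zeros, and use the end behaviour $W\sim -pA_1 s>0$ near $0$ and $W\to+\infty$. The gap is that you never carry out the step that actually proves the lemma: showing that the coefficients of $W$ have at most two sign changes. You leave this as a plan, predict it ``likely'' fails, and sketch unspecified fallbacks.

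In fact no fallback and no new inequality in $(p,c)$ is needed.
\begin{itemize}
\item Order the exponents of $g$ as $1,2,p-1,p,p+1,p+2,2p-2,2p-1,2p,2p+1$. The coefficients of $g$ then have signs $(-,-,+,+,-,-,-,-,+,+)$. This uses only $\frac{5}{4p-3}<c<\frac{4}{3p-2}$, and it is exactly the computation you need anyway for Lemma~\ref{lem:g-at-most-3}.
\item The multiplier $\alpha-(p+1)$ is negative for $\alpha\in\{1,2,p-1,p\}$ and zero for $\alpha=p+1$. It is positive for $\alpha\in\{p+2,2p-2,2p-1,2p,2p+1\}$; for $\alpha=2p-2$ this is where $p>3$ enters.
\item Hence the nine surviving coefficients of $W$ have signs $(+,+,-,-,-,-,-,+,+)$, which is exactly two sign changes.
\item When $3<p<4$ the exponents $p+2$ and $2p-2$ swap order, but this is harmless since both coefficients are negative.
\end{itemize}
With this filled in, your argument is complete.

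For existence of the two zeros you use Rolle interlacing with the three zeros of $g$ from Lemma~\ref{lem: g changes sign 3 times}. The paper instead uses $W(r)<0$ and the intermediate value theorem. Both work. In your route, the later-needed fact $u_1<r<u_2$ still follows from $W(r)<0$ together with the sign pattern.

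There is also a minor slip. You list the exponents of $Q$, but the multiplier $(\gamma-(p+1))$ and the killed term $\gamma=p+1$ refer to exponents of $g=sQ''$. Those are the exponents of $Q$ lowered by one, with the $s^0,s^1$ terms of $Q$ dropping out. In particular the top exponent of $g$ is $2p+1$, not $2p+2$. This does not affect your conclusions.
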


\begin{proof}
 See Appendix~\ref{appendix:A}.
\end{proof}

\begin{lemma}\label{lem: W(r)<0}
We have $W(r)<0$. 
\end{lemma}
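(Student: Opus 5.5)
We need to show $W(r)<0$, where $r=p/(p+1)$, $W=sH'$, $H=sQ''-(p+1)Q'$. The approach is to compute $W$ explicitly and evaluate it at $r$.

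Since $H' = Q'' + sQ''' - (p+1)Q'' = sQ''' - pQ''$, we get
\[
W(s)=s^2Q'''(s)-p\,sQ''(s).
\]
For a monomial $a s^{m}$ in $Q$ this gives $a\,m(m-1)(m-2-p)s^{m-1}$. So $W(s)$ is an explicit generalized polynomial: the coefficients of $Q$ are multiplied by the factors $m(m-1)(m-p-2)$ for $m\in\{2p+2,2p+1,2p,2p-1,p+3,p+2,p+1,p,3,2,1,0\}$.

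Two structural features help. The $s^{p+2}$ term is annihilated, since $m=p+2$. The low-degree terms $s^{3},s^{2},s^{1},s^0$ pick up the factors $6(1-p)$, $2(-p)$, $0$ and $0$, so only the cubic and quadratic coefficients of $Q$ survive there.

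Next I substitute $s=r=p/(p+1)$. Powers $r^{p}$ are then bounded using $r^{p}=(1+1/p)^{-p}\in[e^{-1},\,(4/3)^{-3}]$ for $p\ge3$, a range that decreases toward $1/e$. I would group $W(r)$ as $A(p,c)+B(p,c)\,r^{p}+C(p,c)\,r^{2p}$ with rational coefficients $A,B,C$. Each of these is affine in $c$, so I replace $c$ by whichever endpoint of Lemma~\ref{lem: bounds for c}, $c\in(5/(4p-3),4/(3p-2))$, makes it larger. That yields an upper bound for $W(r)$ as a function of $p$ and $\rho=r^{p}\in[1/e,27/64]$. Treating $\rho$ as an independent variable, the bound is a quadratic in $\rho$. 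Its maximum over the interval sits at an endpoint, or at the vertex if the quadratic is concave. At that point it becomes a rational function in $p$, and I clear denominators to get a one-variable polynomial inequality in $p>3$. That inequality I would verify by a shift $p=3+u$ and checking that all coefficients have the correct sign, or by a Sturm-type count.

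The main obstacle is that decoupling $c$ and $r^p$ from $p$ may be too lossy near $p=3$ or as $p\to\infty$, where $W(r)$ could be only barely negative. To check this, I would first examine the asymptotics. As $p\to\infty$ we have $c\sim 2\ln2/(p\ln3)$ and $r^{p}\to 1/e$, and I would find the leading order of $W(r)$ in $p$ to confirm a definite negative sign. If the crude bounds fail on some range, I would split into $p\in(3,p_0]$ and $p>p_0$. On the bounded range I would use finer enclosures of $c(p)$ and $r^{p}$, via monotonicity of $(1+1/p)^{-p}$ on subintervals; on the tail I would use the asymptotic sign. This parallels how $g(r)<0$ is handled in Lemma~\ref{lem: g changes sign 3 times}(ii), so the same bookkeeping of Appendix~\ref{appendix:B} can be reused.
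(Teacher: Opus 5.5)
Your plan is essentially the paper's proof: write $W(s)=s^2Q'''(s)-p\,sQ''(s)$, evaluate at $r$ as a quadratic in $x=r^p\in(1/e,27/64]$ with coefficients affine in $c$, use $5/(4p-3)<c<4/(3p-2)$, and reduce to one-variable polynomial inequalities in $p$. The one difference is in how $c$ is handled: the paper uses affinity of the whole expression in $c$ and checks the two endpoints $c_L,c_U$ separately, whereas your coefficient-wise worst case is coarser. Since $A$ is increasing and $B,C$ are decreasing in $c$, your bound equals $W(r)|_{c=c_L}$ plus the small positive term $A(c_U)-A(c_L)$, which still leaves it negative (e.g.\ about $-440$ versus $-515$ at $p=3$, $x=1/3$), so your fallback range-splitting is not needed.
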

\begin{proof}
 See Appendix~\ref{appendix:B}.
\end{proof}

One more technical lemma
\begin{lemma}\label{lem: tecnichal lemma about Q'sA and Hr}
 We have $Q'(s_{A})<0$, and $H(r)>0$.  
\end{lemma}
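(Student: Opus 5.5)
We plan to prove both inequalities by direct evaluation, since $s_A=\frac{p-1}{p+1}$ and $r=\frac{p}{p+1}$ are explicit points. The difficulty is that $Q$ contains powers $s^{2p+2},\dots,s^{p}$ whose exponents depend on $p$.

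The first step is to rewrite the quantities so that the $p$-dependence becomes manageable.
\begin{itemize}
\item From the definition $H=sQ''-(p+1)Q'$ we have $H(r)=g(r)-(p+1)Q'(r)$.
\item By Lemma~\ref{lem: g changes sign 3 times}(ii), $g(r)<0$.
\item So $H(r)>0$ reduces to showing $Q'(r)<-\frac{g(r)}{p+1}$. It is cleaner to prove the stronger-looking statement $Q'(r)<0$, combined with a quantitative bound.
\end{itemize}
Since $g(r)$ is already negative, the two terms of $H(r)$ have opposite-looking roles. We will therefore not go through $Q'(r)<0$ alone. Instead we write $H$ directly as a sum of monomials: if $Q=\sum a_k s^{e_k}$, then
\[
H(s)=\sum a_k e_k\,(e_k-p-2)\,s^{e_k-1}.
\]
In $H$, the terms with exponent $e_k=p+2$ vanish, and the sign of each coefficient $e_k(e_k-p-2)$ is explicit. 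Similarly,
\[
Q'(s_A)=\sum a_k e_k\, s_A^{e_k-1}.
\]

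The second step is to control the large powers. Write $s_A^{p}=\bigl(1-\frac{2}{p+1}\bigr)^{p}$ and $r^{p}=\bigl(1-\frac{1}{p+1}\bigr)^{p}$. These are decreasing in $p$, with limits $e^{-2}$ and $e^{-1}$, and values $1/8$ and $27/64$ at $p=3$. So we may set $X=s^{p}$ and treat $X$ as a variable confined to a short explicit interval, for instance $X\in[e^{-2},1/8]$ at $s_A$ and $X\in[e^{-1},27/64]$ at $r$. After dividing by a suitable power of $p$, each of $Q'(s_A)$ and $H(r)$ becomes a quadratic polynomial in $X$ (the exponents are $2p$, $p$, and $0$ plus small shifts). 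Its coefficients are rational functions of $p$ and $c$.

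The third step is to eliminate $c$ using Lemma~\ref{lem: bounds for c}. The expressions are affine in $c$, so it suffices to check the two endpoint values $c=\frac{5}{4p-3}$ and $c=\frac{4}{3p-2}$. This works provided the bounds on $c$ are tight enough; if they are not, we would use sharper bounds on $c(p)$ obtained from the Taylor expansion of $\ln 3/\ln 2$. The verification then becomes a finite collection of polynomial inequalities in $(p,X)$ on the region $p\ge 3$, $X\in I(p)$. We would check these by sign-definite grouping, for example by multiplying out and confirming that all coefficients have the correct sign after the substitution $p=3+q$, $q\ge 0$. This is exactly the kind of reduction the authors delegated to Grok, and we would record it in Appendix~B.

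The main obstacle is that the quantities being estimated are sums of large terms with near cancellation. For example, $H(r)$ is the difference of two quantities of order $p^{5}$. Crude interval bounds on $X$ may therefore destroy the sign. Our first remedy is to keep the exact relation between $r^{p}$ and $r^{p+1}$ and the joint dependence $X=X(p)$, rather than decoupling them. If that still fails, we would split $p$ into ranges, say $[3,6]$ and $[6,\infty)$. On the bounded range we would use rigorous interval arithmetic. On the unbounded range we would use the asymptotic expansion in $1/p$, where the leading order sign can be read off from $c\sim\frac{2\ln 2}{p\ln 3}$.
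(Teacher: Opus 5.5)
Your plan follows the paper's route. $Q'(s_A)$ and $H(r)$ become quadratics in a power of the evaluation point, $x=s_A^{p}$ and $u=r^{p-1}$ respectively. For $H$ the paper uses exactly your monomial identity $s(as^{\alpha})''-(p+1)(as^{\alpha})'=a\alpha(\alpha-p-2)s^{\alpha-1}$. The coefficients are affine in $c$ and rational in $p$. The paper then replaces $c$ and the power by rational bounds, using monotonicity in each variable. Everything ends in one-variable polynomials in $p$ whose coefficients are all positive after $p=3+q$, so no range splitting, interval arithmetic or asymptotics are needed. Three details need correcting.

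\textbf{The interval for $s_A^p$.} $s_A^{p}=\bigl(\frac{p-1}{p+1}\bigr)^{p}$ is \emph{increasing} in $p$, from $1/8$ at $p=3$ to $e^{-2}\approx 0.1353$. Your interval $[e^{-2},1/8]$ is therefore empty; the correct one is $[1/8,e^{-2})$, and the paper uses $(1/8,17/125)$.

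\textbf{The bound on $c$ for $H(r)$.} The endpoint $c=4/(3p-2)$ genuinely fails here, so your fallback of sharper bounds is not optional.
\begin{itemize}
\item At $p=3$ one has $r^{p-1}=9/16$ and $128\,H(r)=552692-986691\,c$. This vanishes at $c\approx 0.5601$, strictly between $c(3)\approx 0.5579$ and $4/7\approx 0.5714$. By continuity the failure persists for $p$ slightly above $3$.
\item The paper instead uses $\log_2 3>19/12$ (from $3^{12}>2^{19}$), i.e.\ $c<24/(19p-14)$. It pairs this with the $p$-dependent bound $r^{p-1}\le U(p):=2p^{2}/(5p^{2}-5p+2)$, which is exact at $p=3$. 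This matters because the relative margin there is under half a percent.
\item The order of the monotonicity steps also matters. The paper first shows $S$ is decreasing in $u$ on $[0,U(p)]$ at $c=c(p)$, using $c>5/(4p-3)$. Only then does it show $S(p,c,U(p))$ is decreasing in $c$ and substitute $c_+$.
\item For $Q'(s_A)$, by contrast, the crude bound $c<4/(3p-2)$ does suffice, because $Q'(s_A)$ is increasing in $c$.
\end{itemize}

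\textbf{A sign slip in your preliminary reduction.} $H(r)>0$ is equivalent to $Q'(r)<g(r)/(p+1)$, not $Q'(r)<-g(r)/(p+1)$. Since $g(r)<0$, this is \emph{stronger} than $Q'(r)<0$, not weaker. You rightly discard that route.
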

\begin{proof}
 See Appendix~\ref{appendix:B}.
\end{proof}

Finally we are ready to show the complete proof of Lemma~\ref{tek11}. Here is its reformulation: 

\begin{lemma}
For every $p>3$ if $g(s)=0$ (for some $s>0$) then $H(s)>0$ (equivalently $f:=Q'(s)<0$).
\end{lemma}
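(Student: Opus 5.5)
The plan is to combine the sign information on $g$, $W$ and the two test points $s_A<r$ into a monotonicity argument. At a zero of $g$ we have $H=-(p+1)Q'$, so it is enough to show $H>0$, or equivalently $Q'<0$, at each zero of $g$.

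By Lemma~\ref{lem:g-at-most-3} and Lemma~\ref{lem: g changes sign 3 times}, $g$ has exactly three simple positive zeros $z_1<z_2<z_3$ with sign pattern $(-+-+)$. Part (ii) of that lemma places them relative to the test points. Since $g(s_A)>0$, we get $z_1<s_A<z_2$. Since $g(r)<0$ and $g(1)<0$, we get $z_2<r<z_3$.

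Next we locate the zeros $u_1<u_2$ of $W=sH'$ from Lemma~\ref{lem: W has two zeros}. The Rolle interlacing argument applied to $s^{-(p+1)}g$ puts a zero of $W$ in $(z_1,z_2)$ and another in $(z_2,z_3)$. Since $W$ has exactly two positive zeros, this forces $u_1\in(z_1,z_2)$ and $u_2\in(z_2,z_3)$. Lemma~\ref{lem: W(r)<0} gives $W(r)<0$, so $u_1<r<u_2$, which is consistent. In particular both $z_2$ and $r$ lie in $(u_1,u_2)$, where $W<0$, so $H$ is strictly decreasing there.

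We then treat the three zeros in order.
\begin{itemize}
\item At $z_2$: since $z_2<r$ and both lie in $(u_1,u_2)$, we get $H(z_2)>H(r)>0$ by Lemma~\ref{lem: tecnichal lemma about Q'sA and Hr}. Hence $Q'(z_2)<0$.
\item At $z_1$: on $(z_1,z_2)$ we have $g=sQ''>0$, so $Q'$ is increasing there. Since $s_A\in(z_1,z_2)$, this gives $Q'(z_1)<Q'(s_A)<0$, again by Lemma~\ref{lem: tecnichal lemma about Q'sA and Hr}. Hence $H(z_1)=-(p+1)Q'(z_1)>0$.
\item At $z_3$: on $(z_2,z_3)$ we have $g<0$, so $Q'$ is decreasing. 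Therefore $Q'(z_3)<Q'(z_2)<0$, and $H(z_3)>0$.
\end{itemize}
This covers every positive zero of $g$ and proves the lemma.

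The step I expect to be most delicate is pinning $z_2$ inside the interval $(u_1,u_2)$ on which $H$ decreases. This requires the exact zero count for $W$, rather than just a lower bound, together with the strictness of the interlacing. That strictness holds because the zeros of $g$ are simple, so $u_1,u_2$ cannot coincide with any $z_i$. Once $z_2$ is placed, the argument reduces to the three numerical inputs $Q'(s_A)<0$, $H(r)>0$ and $W(r)<0$, which the paper takes from Appendix~B.
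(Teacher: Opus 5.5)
Your proof is correct and is essentially the paper's proof: it uses the same interlacing $z_1<u_1<z_2<u_2<z_3$, the same use of $W(r)<0$ to place $z_2<r$ inside $(u_1,u_2)$ where $H$ decreases, and the same comparison $Q'(z_1)<Q'(s_A)<0$. The only difference is at $z_3$. There you chain $Q'(z_3)<Q'(z_2)<0$, while the paper uses $Q'(z_3)<Q'(1)<0$ after placing $1\in(z_2,z_3)$ via $g(1)<0$; your version is slightly more economical, since it does not need $g(1)<0$ or $Q'(1)<0$ at this step.
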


\begin{proof}
Fix $p>3$. By previous lemmas $g$ has exactly three simple positive zeros, denote them
\[
0<s_1<s_2<s_3.
\]
Moreover, since $g$ changes sign at each $s_i$, the sign pattern is
\begin{equation}\label{eq:g-sign-pattern}
g<0 \text{ on }(0,s_1),\quad
g>0 \text{ on }(s_1,s_2),\quad
g<0 \text{ on }(s_2,s_3),\quad
g>0 \text{ on }(s_3,\infty).
\end{equation}
Since $g(s)=sf'(s)$ and $s>0$, this is also the sign pattern of $f'(s)$.

\medskip
\noindent\textbf{Step 1:} Recall that $W$ has exactly two positive simple zeros $0<u_{1}<u_{2}$.  We know that

\begin{equation}\label{eq:u1ru2}
0<u_1<r<u_2.
\end{equation}
 and
\begin{equation}\label{eq:W-signpattern}
W>0\text{ on }(0,u_1),\quad
W<0\text{ on }(u_1,u_2),\quad
W>0\text{ on }(u_2,\infty).
\end{equation}

\medskip
\noindent\textbf{Step 2: interlacing $s_1<u_1<s_2<u_2<s_3$.}
This we know because of the Rolle's theorem we mentioned before (between any two positive consecutive zeros of $g$ there is at least one zero of $W$). Thus we obtain
\begin{equation}\label{eq:interlacing}
s_1<u_1<s_2<u_2<s_3.
\end{equation}

\medskip
\noindent\textbf{Step 3: monotonicity of $H$ from $W=sH'$.}
Since $W(s)=sH'(s)$ and $s>0$, $\mathrm{sign}(H')=\mathrm{sign}(W)$.
Thus, \eqref{eq:W-signpattern} implies
\begin{align*}
&H \text{ is strictly increasing on }(0,u_1),\quad
H \text{ is strictly decreasing on }(u_1,u_2),\quad\\
&H \text{ is strictly increasing on }(u_2,\infty).
\end{align*}
(This is the key reason for introducing $W$, it will help us to show $H(s_{2})>0$). 

\medskip

\noindent\textbf{Step 4: prove $H(s_i)>0$ for $i=1,2,3$.}

\smallskip
\noindent\emph{(i) The middle root $s_2$.}
By \eqref{eq:interlacing} and \eqref{eq:u1ru2} we have $u_1<s_2<r<u_2$ (to get this order we are using that both $W(r)$ and $g(r)$ are negative).
Since $H$ is decreasing on $(u_1,u_2)$, we obtain
$$
H(s_2)>H(r)>0.
$$

\smallskip
\noindent\emph{(ii) The first root $s_1$.}
From \eqref{eq:g-sign-pattern}, $f'(s)<0$ on $(0,s_1)$ and $f'(s)>0$ on $(s_1,s_2)$,
so $s_1$ is a strict local minimum of $f$, and $f$ is strictly increasing on $(s_1,s_2)$.
Also recall that $s_{A}<r$, $g(s_{A})>0$, and $g(r)<0$. Therefore  $s_A\in(s_1,s_2)$.
Hence
$$
f(s_1)<f(s_A)=Q'(s_{A})<0.
$$

\smallskip
\noindent\emph{(iii) The third root $s_3$.}
Again, from \eqref{eq:g-sign-pattern}, $f'(s)<0$ on $(s_2,s_3)$ and $f'(s)>0$ on $(s_3,\infty)$,
so $s_3$ is a strict local minimum of $f$, and $f$ is strictly decreasing on $(s_2,s_3)$.
Since $g(r)<0$ and $r<1$, while $g(1)<0$ and $g(s)\to+\infty$ as $s\to\infty$, the third zero satisfies $s_3>1$
(indeed $g$ is negative at $1$ but eventually positive).
Thus, $1\in(s_2,s_3)$ and monotonicity on $(s_2,s_3)$ yields
$$
f(s_3)<f(1)=Q'(1)<0.
$$
This finishes the proof of the lemma.
\end{proof}

\pgfmathsetmacro{\cc}{ln(2)/ln(6)}

\begin{figure}[H]
\centering
\includegraphics[width=1\textwidth]{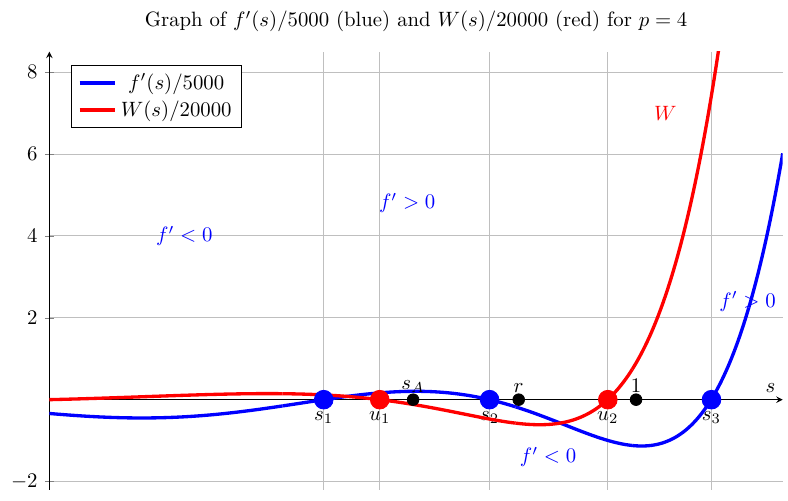}
\label{fig:p7}
\end{figure}

So to summarize, we needed the following list of technical inequalities whose proofs are given in appendices A and B: $Q'(1)<0; Q'(s_{A})<0; g(s_{A})>0; g(r)<0; g(1)<0; W(r)<0; H(r) = rQ''(r)-(p+1)Q'(r)>0;$ sign pattern of coefficients for $g$  and sign pattern for $W$.

\appendix
\section{An application of Descartes' generalized rule of signs}
\label{appendix:A}

\begin{proof}[Proof of Lemma~\ref{tek-1}]
We first show $h(0)=0$. Substituting $s=0$ into the definition of $h(s)$ gives
\[
\left( \frac{2(2\cdot0+1)^{p/2}}{0+2} \right)^c - \left( \frac{(0+2)^p}{0+2} \right)^{\frac{1}{p-1}} + 1 = 1^c - 2 + 1 = 0.
\]

Next, $h(1)=0$. Substituting $s=1$ yields
\[
\left( \frac{2\cdot3^{p/2}}{3} \right)^c - \left( \frac{3^p}{3} \right)^{\frac{1}{p-1}} + 1 = \left(2 \cdot 3^{(p-2)/2}\right)^c - 3 + 1.
\]
By the choice of $c=c(p)$,
\[
c \cdot \ln\bigl(2 \cdot 3^{(p-2)/2}\bigr) = \ln 2,
\]
so $\bigl(2 \cdot 3^{(p-2)/2}\bigr)^c = 2$. Therefore $h(1) = 2 - 3 + 1 = 0$.

For $h'(1)=0$, note that the prefactor $p/(s^p + 2)$ in the given expression for $h'(s)$ evaluates to $p/3 > 0$ at $s=1$. The bracket, however, consists of two summands: the first contains the factor $s^p + s^{p-1} - 2$, which vanishes at $s=1$; the second contains the factor $s^{p-1} - 1$, which also vanishes at $s=1$. Hence the bracket is zero at $s=1$, and $h'(1)=0$.

To obtain $h''(1)$, write
\[
h'(s) = \frac{p}{s^p + 2} \, M(s),
\]
where $M(s)$ denotes the bracket in the given formula for $h'(s)$. Since $M(1)=0$,
\[
h''(1) = \frac{p}{3} M'(1).
\]
Denote
\[
F(s) = \left( \frac{2(2s+1)^{p/2}}{s^p + 2} \right)^c, \quad G(s) = \left( \frac{(s+2)^p}{s^p + 2} \right)^{\frac{1}{p-1}}.
\]
Then
\[
M(s) = -c \, F(s) \, \frac{s^p + s^{p-1} - 2}{2s + 1} + 2 \, G(s) \, \frac{s^{p-1} - 1}{(p-1)(s+2)}.
\]
We have $F(1)=2$ and $G(1)=3$. Moreover, both fractions vanish at $s=1$. Therefore, when differentiating, the product rule applied to each term yields (at $s=1$)
\[
M'(1) = -c \cdot 2 \cdot \left( \frac{d}{ds} \frac{s^p + s^{p-1} - 2}{2s + 1} \right)\bigg|_{s=1} + 6 \cdot \left( \frac{d}{ds} \frac{s^{p-1} - 1}{(p-1)(s+2)} \right)\bigg|_{s=1}.
\]
For a quotient $\mathrm{num}/\mathrm{den}$ with $\mathrm{num}(1)=0$, the derivative at $s=1$ simplifies to $\mathrm{num}'(1)/\mathrm{den}(1)$.

 For the first fraction: $\mathrm{num}'(s) = p s^{p-1} + (p-1)s^{p-2}$, so $\mathrm{num}'(1) = 2p-1$ and $\mathrm{den}(1)=3$. Thus its derivative at $1$ is $(2p-1)/3$.
 For the second fraction: $\mathrm{num}'(s) = (p-1)s^{p-2}$, so $\mathrm{num}'(1) = p-1$ and $\mathrm{den}(1) = 3(p-1)$. Thus its derivative at $1$ is $1/3$.

Hence
\[
M'(1) = -2c \cdot \frac{2p-1}{3} + 6 \cdot \frac{1}{3} = 2 - \frac{2c(2p-1)}{3}.
\]
It follows that
\[
h''(1) = \frac{p}{3} \left( 2 - \frac{2c(2p-1)}{3} \right) = \frac{p(p-2)(3\ln 3 - 4\ln 2)c}{9\ln 2}.
\]

Finally, for the limit as $s \to \infty$:
\[
\frac{2(2s+1)^{p/2}}{s^p + 2} \sim 2^{1 + p/2} s^{-p/2} \to 0,
\]
so the first summand raised to the power $c$ tends to $0$. For the second summand,
\[
\frac{(s+2)^p}{s^p + 2} \to 1,
\]
hence its $(1/(p-1))$-th power tends to $1$. Therefore $h(s) \to 0 - 1 + 1 = 0$ as $s \to \infty$.
\end{proof}

\begin{proof}[Proof of Lemma \ref{lem: bounds for c}]
Observe that since $2^6<3^4$, then
$8\ln 3-8\ln 2-4\ln 2<3[4\ln 3-6\ln 2]<[4\ln 3-6\ln 2]p$. Thus $c(p)=\frac{2\ln 2}{2\ln 2+(p-2)\ln 3}< \frac{4}{3p-2}$. Similarly, since $3^5<2^8$, then $6\ln 2+10\ln 2-10\ln 3<3[8\ln 2-5\ln 3]<[8\ln 2-5\ln 3]p$, or equivalently $\frac{5}{4p-3}<c(p)$.
\end{proof}

\begin{proof}[Proof of Lemma \ref{tex01}]
We start observing that 
\begin{equation}
    \psi(0)=-\log(2^{c-1}c(p-1))+\log(2^{c-1})=-\log((p-1)c)<0
\end{equation}
since $c>\frac{1}{p-1}$ by Lemma \ref{lem: bounds for c}. By L'Hopital's rule and \eqref{eq: nice identity for n=3} we also have
\begin{align*}
 \psi(1)&=-\log(2^{c-1}c(p-1))+\log\frac{(p-1)3^{c+1-\frac{pc}{2}}}{2p-1}\\
 &=\log\left[\left(\frac{3}{(2p-1)c}\right)\left(2\frac{3^{(1-p/2)c}}{2^c}\right)\right]\\
 &=\log\left(\frac{3}{(2p-1)c}\right)>0,
\end{align*}
where the last inequality follows from Lemma \ref{lem: bounds for c}, since $\frac{3}{2p-1}>\frac{4}{3p-2}>c$. Finally, for the limit as $s \to \infty$, we observe
\begin{align*}
    \log\left(\frac{(s+2)^{\frac{1}{p-1}}\cdot\bigl(s^{p-1}-1\bigr)\cdot(2s+1)^{\,1-\frac{pc}{2}}\cdot\bigl(s^{p}+2\bigr)^{\,c-\frac{1}{p-1}}}{s^{p}+s^{p-1}-2} \right)&\sim\log s^{\frac{1}{p-1}+p-1+1-\frac{pc}{2}+pc-\frac{p}{p-1}-p}\\
    &\sim\left(\frac{pc}{2}-1\right)\log s.
\end{align*}
Then $\lim_{s \to +\infty} \psi(s) =-\infty$, since $c<\frac{2}{p}$ by Lemma \ref{lem: bounds for c}.
\end{proof}

\begin{proof}[Proof of Lemma \ref{lem: P(0), P'(0) etc}]
By Lemma \ref{lem: bounds for c}
\begin{align*}
c<\frac{2}{p}<\frac{4p-3}{2p(p-1)}=\frac{16p-12}{8p^2-8p},    
\end{align*}
then $P(0)=8cp^{2}-8cp-16p+12<0$. Moreover, $P'(0)=4p(cp-c-2)<0$ since $c<\frac{2}{p}<\frac{2}{p-1}$. Direct evaluation also shows that $P(1)=P'(1)=0$, and $P''(1)=-9(p-2)(p-1)^2<0$. Finally, since $c<\frac{2}{p}$, then the leading coefficient of $P$ is positive, thus $\lim_{s \to \infty} P(s)=+\infty$.
\end{proof}

\begin{proof}[Proof of Lemma \ref{dax91}]
Observe that $Q(0)=4(p-1)^2(p-2)(p-3)>0$ since $p>3$. Moreover, since $c<\frac{2}{p}$, the leading coefficient $3p(p-1)(2-cp)$ is positive, thus $\lim_{s \to \infty}Q(s)=+\infty$. 
\end{proof}

\begin{proof}[Proof of Lemma \ref{lem: Q'(0),Q'(1),Q'(infinity)}]
We define 
\begin{equation*}
    F(x):=(\ln 3)(3x^3-16x+8)+(\ln 4)(-5x^2+14x-4), 
\end{equation*}
for $x\in(3,\infty)$. We observe that
\begin{align*}
    F''(x)=18x\ln 3-10\ln 4>54\ln 3-10\ln 4>0, 
\end{align*}
for all $x\in(3,\infty)$. Then $F'$ is increasing, moreover
\begin{align*}
    F'(x)=(\ln 3)(9x^2-16)+(\ln 4)(-10x+14).
\end{align*}
Then, $F'(x)\geq 65\ln 3-16\ln 4>0$ for all $x\in(3,\infty)$. Thus, $F$ is increasing, in particular
\begin{align*}
    F(x)\geq 41\ln 3-7\ln 4>0.
\end{align*}
Therefore $Q'(0)>0$, since
\begin{align*}
    c=\frac{\ln 4}{(p-2)\ln 3+\ln 4}<\frac{3p^2+6p-4}{8p(p-1)}=\frac{6p^2+12p-8}{16p(p-1)}.
\end{align*}
Moreover, by Lemma \ref{lem: bounds for c} we have $c<\frac{2}{p}$, hence
$3p(p-1)(3p-1)(2-cp)>0$. Therefore the leading coefficient of $Q'(s)$ is positive, so
\[
\lim_{s\to\infty} Q'(s)=+\infty.
\]
In particular, $Q'(\infty)>0$.
Finally, direct evaluation shows that 
\begin{align*}
Q'(1)=-18\,(p-1)^2(2p-1)(2cp^2-cp-2p-2)<0,
\end{align*}
since by Lemma \ref{lem: bounds for c} we have $c>\frac{5}{4p-3}>\frac{2(p+1)}{p(2p-1)}$, where in the last inequality we used that $2p^2-7p+6=(2p-3)(p-2)>0$ for all $p\in(3,\infty)$.
\end{proof}

\begin{proof}[Proof of Lemma \ref{lem:g-at-most-3}]
Fix $p>3$ and write $c=c(p)$. Recall (from the bounds on $c(p)$ proved above) that
\begin{equation}\label{eq:c-bounds-used}
\frac{5}{4p-3}<c<\frac{4}{3p-2}
\qquad\text{and in particular}\qquad
\frac{1}{p-1}<c<\frac{2}{p}.
\end{equation}

Starting from the explicit formula for $Q$ written above, a direct differentiation gives that
$g(s)=sQ''(s)$ is a \emph{pseudopolynomial} (generalized polynomial) with the following $10$ monomials:
\begin{align}\label{eq:g-expansion}
&g(s)=A_{1}\,s + A_{2}\,s^{2}
+ A_{p-1}\,s^{p-1}+A_{p}\,s^{p}
+ A_{p+1}\,s^{p+1}+A_{p+2}\,s^{p+2} \nonumber \\
&+ A_{2p-2}\,s^{2p-2}+A_{2p-1}\,s^{2p-1}
+ A_{2p}\,s^{2p}+A_{2p+1}\,s^{2p+1},
\end{align}
where the coefficients are (all prefactors below are strictly positive for $p>3$)
\begin{align*}
A_{1} &=
2p(p-1)\Bigl(-16c\,p(p-1)-6p^{2}+24p+4\Bigr),\\
A_{2} &=
-24\,p^{2}(p+1)\Bigl(c(p-1)+p-2\Bigr),\\
A_{p-1} &=
p(p-1)(2p-2)(2p-3)\Bigl(10c\,p(p-1)+2p^{2}-14p+4\Bigr),\\
A_{p} &=
p(p+1)(2p-1)(2p-2)\Bigl(17c\,p(p-1)+3p^{2}-15p-8\Bigr),\\
A_{p+1} &=
2p(2p-1)(p+1)(p+2)\Bigl(8c\,p(p-1)-3p^{2}-5\Bigr),\\
A_{p+2} &=
2p^{2}(p-1)(p+2)(p+3)(2p+1)\,(c-2),\\
A_{2p-2} &=
-2p(2p-1)(2p-2)(3p-3)(3p-4)\Bigl(c(p-1)-1\Bigr),\\
A_{2p-1} &=
2p(2p-1)(3p-2)(3p-3)\Bigl(-5c\,p(p-1)+5p+1\Bigr),\\
A_{2p} &=
2p(2p+1)(3p-1)(3p-2)\Bigl(-4c\,p(p-1)+6p-2\Bigr),\\
A_{2p+1} &=
3(2p+2)(2p+1)p(p-1)(3p-1)\,(2-cp).
\end{align*}

We now determine the signs of these coefficients using \eqref{eq:c-bounds-used}:

\smallskip
\noindent$A)$ \emph{The two lowest-degree terms.}
Since $c>\frac{1}{p-1}$, we have $-16c\,p(p-1)<-16p$, hence
\[
-16c\,p(p-1)-6p^{2}+24p+4
\le -16p-6p^{2}+24p+4=-2(3p^{2}-4p-2)<0,
\]
so $A_{1}<0$. Also $c(p-1)+p-2>0$, so $A_{2}<0$.

\smallskip
\noindent$B)$ \emph{The $s^{p-1}$ and $s^{p}$ coefficients.}
Using $c>\frac{1}{p-1}$, we get
\[
10c\,p(p-1)+2p^{2}-14p+4 > 10p+2p^{2}-14p+4 = 2(p^{2}-2p+2)>0,
\]
hence $A_{p-1}>0$. Similarly,
\[
17c\,p(p-1)+3p^{2}-15p-8 > 17p+3p^{2}-15p-8 = 3p^{2}+2p-8>0,
\]
so $A_{p}>0$.

\smallskip
\noindent$C)$ \emph{The $s^{p+1}$ and $s^{p+2}$ coefficients.}
From $c<\frac{2}{p}$ we have $8c\,p(p-1)<16(p-1)$, and for $p>3$,
\[
16(p-1) < 3p^{2}+5
\quad\Longrightarrow\quad
8c\,p(p-1)-3p^{2}-5<0,
\]
so $A_{p+1}<0$. Also $c<\frac{2}{p}<1$ implies $c-2<0$, hence $A_{p+2}<0$.

\smallskip
\noindent$D)$ \emph{The $s^{2p-2}$ and $s^{2p-1}$ coefficients.}
Since $c>\frac{1}{p-1}$, we have $c(p-1)-1>0$, thus $A_{2p-2}<0$.
For $A_{2p-1}$ we use the stronger lower bound $c>\frac{5}{4p-3}$:
\[
5c\,p(p-1)>\frac{25p(p-1)}{4p-3}\ge 5p+1
\qquad (p\ge 3),
\]
so $-5c\,p(p-1)+5p+1<0$ and therefore $A_{2p-1}<0$.

\smallskip
\noindent$E)$ \emph{The top two coefficients.}
Using the upper bound $c<\frac{4}{3p-2}$ we obtain
\[
4c\,p(p-1) < \frac{16p(p-1)}{3p-2} < 6p-2,
\]
hence $-4c\,p(p-1)+6p-2>0$ and $A_{2p}>0$.
Finally $c<\frac{2}{p}$ gives $cp<2$, so $2-cp>0$ and thus $A_{2p+1}>0$.

\smallskip
Collecting the signs in the increasing-exponent order
\[
1,\ 2,\ p-1,\ p,\ p+1,\ p+2,\ 2p-2,\ 2p-1,\ 2p,\ 2p+1,
\]
we get the sign pattern
\[
(A_{1},A_{2},A_{p-1},A_{p},A_{p+1},A_{p+2},A_{2p-2},A_{2p-1},A_{2p},A_{2p+1})
\stackrel{\mathrm{sign}}{=}
(-,-,+,+,-,-,-,-,+,+),
\]
which has exactly $3$ sign changes. (When $3<p<4$, the order of the two exponents
$p+2$ and $2p-2$ swaps, but both coefficients are negative, so the number of sign
changes is unchanged.)

By the generalized Descartes rule of signs for pseudopolynomials associated with a
Chebyshev system of monomials $\{s^{\alpha}\}$ on $(0,\infty)$ the number of positive
zeros of $g$ counted with multiplicity is bounded by the number of sign changes of
its coefficient sequence. Therefore $g$ has at most $3$ positive zeros (with multiplicity),
as claimed.

\vskip1cm

\end{proof}

\begin{proof}[Proof of Lemma \ref{lem: W has two zeros}]
We show that $W$ has at most 2 positive roots counted with multiplicities. 

  Recall $g(s)=sQ''(s)$ and $H(s)=sQ''(s)-(p+1)Q'(s)$, and define $W=sH'$.
Since $g=sQ''$, we can write
\[
W(s)=sH'(s)=\Bigl(s\frac{d}{ds}-(p+1)\Bigr)g(s).
\]
Using the expansion \eqref{eq:g-expansion} from the previous lemma, we obtain
\[
W(s)=\sum_{\alpha\in\{1,2,p-1,p,p+1,p+2,2p-2,2p-1,2p,2p+1\}}
A_{\alpha}\,(\alpha-(p+1))\,s^{\alpha}.
\]
The term with $\alpha=p+1$ vanishes identically, so $W$ is again a pseudopolynomial
with $9$ monomials and exponents
\[
1,\ 2,\ p-1,\ p,\ p+2,\ 2p-2,\ 2p-1,\ 2p,\ 2p+1
\quad(\text{ordered increasingly}).
\]
Since $\alpha-(p+1)<0$ for $\alpha\in\{1,2,p-1,p\}$ and $\alpha-(p+1)>0$ for
$\alpha\in\{p+2,2p-2,2p-1,2p,2p+1\}$, the coefficient signs of $W$ follow from the
signs of $A_{\alpha}$ proved in Lemma~\ref{lem:g-at-most-3}:
\[
(+,+,-,-,-,-,-,+,+),
\]
which has exactly $2$ sign changes. By the generalized Descartes rule of signs for
pseudopolynomials it follows that $W$ has \emph{at most two} positive zeros counted with multiplicity. Finally, observe that the leading coefficient of $W(s)$ is $pA_{2p+1}>0$ thus $\lim_{s\to\infty} W(s)=+\infty$, and for small values of $s$ (in a neighborhood of $0$) we have $W(s)\sim -pA_1>0$. Then, by Lemma \ref{lem: W(r)<0} and the intermediate value theorem, $W$ has a zero $u_1\in(0,r)$ and another zero $u_2\in(r,+\infty)$. Moreover,
$W(s)>0$ for all $s\in(0,u_1)$, $W(s)<0$ for all $s\in(u_1,u_2)$, and $W(s)>0$ for all $s\in(u_2,+\infty)$. 

\end{proof}

\begin{proof}[Proof of Lemma \ref{lem: g changes sign 3 times} \textup{(i)}]

The leading coefficient of $g$ is $3p(p-1)(3p-1)(2-cp)(2p+2)(2p+1)>0$ (since $c<\frac{2}{p}$ by Lemma \ref{lem: bounds for c}), then
$\lim_{s\to\infty}g(s)=+\infty$. Moreover,
$$
g(1)=Q''(1)
= -6p(p-1)^2\Bigl(8c(2p-1)(3p^2-2p+1)-(54p^2-p+2)\Bigr).
$$
Observe that
\begin{align*}
&\ln 4\,(48p^3-56p^2+32p-8)-(54p^2-p+2)\big((\ln 3)(p-2)+\ln 4\big)\\
&=(p-2)[(96\ln 2-54\ln 3)p^2+(\ln 3-28\ln 2)p+(10\ln 2-2\ln 3)]\\
&=:(p-2)G(p)>0
\end{align*}
for all $p>3$, since $G'(p)=2(96\ln 2-54\ln 3)p+(\ln 3-28\ln 2)\geq 6(96\ln 2-54\ln 3)+(\ln 3-28\ln 2)=548\ln 2-323\ln 3>0$ and $G(3)=5(158\ln 2-97\ln 3)>0$. Then 
$$
c=\frac{\ln 4}{(\ln 3)(p-2)+\ln 4}>\frac{54p^2-p+2}{48p^3-56p^2+32p-8},
$$
for all $p>3$, thus $g(1)<0$.

\end{proof}

\section{Around Sturm's algorithm}
\label{appendix:B}

Here we discuss the proofs of the most technical ingredients of our argument: the proof of Lemma \ref{lem: g changes sign 3 times} {(ii)}, Lemma \ref{lem: tecnichal lemma about Q'sA and Hr} and Lemma \ref{lem: W(r)<0}.

\begin{lemma}[part of Lemma~\ref{lem: tecnichal lemma about Q'sA and Hr} $Q'(s_{A})<0$]\label{lem:Qprime-sA} 
Let $p>3$, $c=c(p)=\dfrac{2\ln2}{(p-2)\ln3+2\ln2}$, and $s_A=\dfrac{p-1}{p+1}$.
Then
\[
Q'(s_A)<0.
\]
\end{lemma}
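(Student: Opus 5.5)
Since $Q'(s)$ is a generalized polynomial in $s$ whose coefficients are affine in $c$, the plan is to first write $Q'(s_A)=\mathcal{A}(p)+c\,\mathcal{B}(p)$. Here $\mathcal{A},\mathcal{B}$ are explicit sums of terms of the form (polynomial in $p$)$\cdot s_A^{\gamma}$, with exponents $\gamma\in\{0,1,2,p-1,p,p+1,p+2,2p-2,2p-1,2p,2p+1\}$ read off from the formula for $Q$. I will then normalize by a power of $s_A$, say multiply by $s_A^{-(p+1)}$, or simply group terms according to the quantities $s_A^{p}$ and $s_A^{2p}$. This writes
\[
Q'(s_A)=R_0(p)+R_1(p)\,s_A^{p}+R_2(p)\,s_A^{2p},
\]
where each $R_i$ is a rational function of $p$ that is affine in $c$. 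The key observation is that $s_A^{p}=\bigl(1-\tfrac{2}{p+1}\bigr)^{p}$ is sandwiched between explicit bounds. It decreases to $e^{-2}$ as $p\to\infty$, and we have two-sided estimates such as $e^{-2}\bigl(1-\tfrac{2}{p}\bigr)\le s_A^{p}\le e^{-2}\bigl(1+\tfrac{c_0}{p}\bigr)$. These follow from $\ln(1-x)=-x-x^2/2-\dots$ with $x=2/(p+1)$, using $-x-x^2/(2(1-x))\le\ln(1-x)\le -x-x^2/2$.

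Next I will use Lemma~\ref{lem: bounds for c} to eliminate $c$. Since the expression is affine in $c$, it suffices to check the sign at the endpoints $c=\tfrac{5}{4p-3}$ and $c=\tfrac{4}{3p-2}$, after first determining the sign of the coefficient of $c$. If those bounds turn out too crude, I will substitute the exact value $c=2\ln 2/((p-2)\ln3+2\ln2)$ and use rational bounds on $\ln 2,\ln 3$. Similarly, the expression is quadratic in $\sigma=s_A^{p}$. I will check the sign of the quadratic on the interval $[\sigma_-(p),\sigma_+(p)]$ given by the bounds above. Concavity or convexity, or simply monotonicity in $\sigma$, should let me reduce this to checking the two endpoints. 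After clearing denominators, each endpoint check becomes a single-variable polynomial inequality in $p$ on $(3,\infty)$, possibly involving $e^{-2}$ through a rational approximation. I will verify these by Sturm sequences, in the spirit of this appendix's title, or by shifting $p=3+u$ and exhibiting all coefficients of the resulting polynomial in $u$ as negative.

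The main obstacle is precision near $p=3$ and possibly at large $p$. As $p\to\infty$, $c\sim \tfrac{2\ln2}{p\ln3}$, and the leading-order terms of $Q'(s_A)$ may nearly cancel, so the asymptotic bounds on $s_A^{p}$ may need a second-order term. Near $p=3$, the expansion of $s_A^{p}=(1/2)^{3}$ is exact-ish, but the error bounds in $1/p$ are weakest there. I would therefore split the range into $p\in(3,p_0]$ and $p\ge p_0$. On the compact piece I would use interval arithmetic or monotonicity in $p$ on a few subintervals, and on the tail I would use the asymptotic expansion with explicit remainders.
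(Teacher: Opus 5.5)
Your plan is essentially the paper's: write $Q'(s_A)$ as affine in $c$ and quadratic in $x=s_A^{p}$, show $\partial_c Q'(s_A)>0$ on the relevant $x$-range so that only the endpoint $c=\frac{4}{3p-2}$ matters, then use monotonicity in $x$ together with positivity of polynomials shifted to $q=p-3$. The paper gets by with the crude uniform bounds $\frac18<s_A^{p}<\frac{17}{125}$, with no range splitting or asymptotic remainders. Also, $s_A^{p}$ increases from $\frac18$ at $p=3$ to $e^{-2}$; it does not decrease, though this does not affect your sandwich argument.
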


\begin{proof}
Throughout the proof we write
\[
x:=s_A^{\,p}=\left(\frac{p-1}{p+1}\right)^{p}\in(0,1).
\]

Recall that 
\begin{align*}
c(p)<\bar c:=\frac{4}{3p-2} \quad \text{and} \quad \frac18<x<\frac{17}{125}\qquad(p>3).
\end{align*}

\medskip
\noindent\textbf{Step 1: $Q'(s_A)$ is strictly increasing in the parameter $c$.}
For this step we temporarily regard $c$ as a free parameter (so $Q$ depends on $c$).
A direct differentiation of the explicit formula for $Q(s)$ and substitution $s=s_A$
gives an affine dependence on $c$; more precisely,
\begin{equation}\label{eq:dcdc}
\frac{\partial}{\partial c}Q'(s_A)
=-\frac{2p}{(p+1)^2}\,u_p(x),
\qquad
u_p(x):=A_u(p)\,x^2+B_u(p)\,x+C_u(p),
\end{equation}
where
\begin{align*}
A_u(p)&:=108p^6+114p^5-154p^4-186p^3+14p^2+48p+8,\\
B_u(p)&:=-72p^6-36p^5+213p^4+56p^3-150p^2-28p+17,\\
C_u(p)&:=30p^5-60p^4-16p^3+76p^2-14p-16.
\end{align*}
We claim that $u_p(x)<0$ for the $x$ in \eqref{eq:x-bounds-sA}. This will imply
$\frac{\partial}{\partial c}Q'(s_A)>0$, i.e.\ $Q'(s_A)$ is strictly increasing in $c$.

\smallskip
\emph{(i) $u_p$ is strictly decreasing on $[0,\frac{17}{125}]$.}
Indeed, $u_p'(x)=2A_u(p)x+B_u(p)$ is increasing in $x$ (the polynomial $q \mapsto A_{u}(3+q)$ has positive coefficients for $q>0$), hence for $0\le x\le \frac{17}{125}$,
\[
u_p'(x)\le u_p'\!\left(\frac{17}{125}\right).
\]
A direct computation yields
\[
125\,u_p'\!\left(\frac{17}{125}\right)=34A_u(p)+125B_u(p)
=:D(p),
\]
where
\[
D(p)=-5328p^6-624p^5+21389p^4+676p^3-18274p^2-1868p+2397.
\]
Let $q:=p-3\ge0$. Expanding $D(p)$ at $p=3$ gives
\[
D(p)=-(5328q^6+96528q^5+707251q^4+2675936q^3+5499184q^2+5804192q+2452656)<0.
\]
Hence $u_p'\!\left(\frac{17}{125}\right)<0$, so $u_p'(x)<0$ for all $x\in[0,\frac{17}{125}]$,
and $u_p$ is strictly decreasing there.

\smallskip
\emph{(ii) $u_p(\frac18)<0$.}
Compute
\[
64\,u_p\!\left(\frac18\right)=A_u(p)+8B_u(p)+64C_u(p)
=:H(p),
\]
where
\[
H(p)=-468p^6+1746p^5-2290p^4-762p^3+3678p^2-1072p-880.
\]
With $q=p-3\ge0$ one finds
\[
H(p)=-(468q^6+6678q^5+39280q^4+123822q^3+224040q^2+222112q+93952)<0,
\]
hence $u_p(\frac18)<0$.

\smallskip
Since $u_p$ is decreasing on $[0,\frac{17}{125}]$ and $x>\frac18$ by \eqref{eq:x-bounds-sA}, we get
\[
u_p(x)\le u_p\!\left(\frac18\right)<0.
\]
Therefore, by \eqref{eq:dcdc},
\[
\frac{\partial}{\partial c}Q'(s_A)>0\qquad(p>3),
\]
so $c\mapsto Q'(s_A)$ is strictly increasing.

\medskip
\noindent\textbf{Step 2: estimate $Q'(s_A)$ at the upper bound $\bar c=\frac{4}{3p-2}$.}
A direct simplification of $Q'(s_A)$ at $c=\bar c$ yields
\begin{equation}\label{eq:Qprime-cbar}
Q'(s_A)\big|_{c=\bar c}
=
-\frac{2}{(p-1)(p+1)^2(3p-2)}\,g_p(x),
\end{equation}
where
\[
g_p(x):=A(p)\,x^2+B(p)\,x+C(p)
\]
with
\begin{align*}
A(p)&:=27p^8+15p^7-301p^6-110p^5+465p^4+127p^3-171p^2-8p+4,\\
B(p)&:=-126p^8+117p^7+675p^6-924p^5-220p^4+617p^3-121p^2-34p+16,\\
C(p)&:=27p^8-78p^7+30p^6+68p^5-37p^4+10p^3-36p^2+16.
\end{align*}
Since $(p-1)(p+1)^2(3p-2)>0$ for $p>3$, it suffices to show $g_p(x)>0$.

\smallskip
\emph{(i) $g_p$ is strictly decreasing on $[0,\frac{17}{125}]$.}
Indeed, $g_p'(x)=2A(p)x+B(p)$. Next we notice that $A(p)>0$ for $p>3$ because the polynomial $q \mapsto A(q+3)$ has nonnegative coefficients.  Therefore, we can use the estimate 
\[
g_p'(x)\le g_p'\!\left(\frac{17}{125}\right)
=\frac{34}{125}A(p)+B(p)
<4A(p)+B(p),
\]
since $\frac{34}{125}<4$.
Thus it is enough to show $4A(p)+B(p)<0$, i.e.\ $-B(p)-4A(p)>0$.
A direct computation gives
\[
-B(p)-4A(p)=18p^8-177p^7+529p^6+1364p^5-1640p^4-1125p^3+805p^2+66p-32.
\]
Let $q=p-3\ge0$ and expand at $p=3$:
\[
-B(p)-4A(p)=18q^8+255q^7+1348q^6+4649q^5+25030q^4+130764q^3+377320q^2+532800q+292288>0.
\]
Hence $g_p'(x)<0$ on $[0,\frac{17}{125}]$, and $g_p$ is strictly decreasing there.

\smallskip
\emph{(ii) $g_p(\frac{17}{125})>0$.}
Compute
\[
g_p\!\left(\frac{17}{125}\right)
=\frac{1}{15625}\Bigl(289A(p)+2125B(p)+15625C(p)\Bigr)
=\frac{N(p)}{15625},
\]
where
\begin{align*}
N(p)=&161928p^8-965790p^7+1816136p^6-932790p^5-911240p^4+1504078p^3\\
&-869044p^2-74562p+285156.
\end{align*}
Let $q=p-3\ge0$. Expanding $N(p)$ at $p=3$ yields
\begin{align*}
N(p)=&161928q^8+2920482q^7+22340402q^6+94058484q^5+235735480q^4
+352833112q^3\\
&+295060604q^2+111719616q+6561792>0.
\end{align*}
Therefore $g_p(\frac{17}{125})>0$.

\smallskip
Finally, since $x<\frac{17}{125}$ by \eqref{eq:x-bounds-sA} and $g_p$ is decreasing on $[0,\frac{17}{125}]$,
we have
\[
g_p(x)>g_p\!\left(\frac{17}{125}\right)>0.
\]
Plugging this into \eqref{eq:Qprime-cbar} gives
\[
Q'(s_A)\big|_{c=\bar c}<0.
\]

\medskip
\noindent\textbf{Step 3: conclude for $c=c(p)$.}
By Step~1, $c\mapsto Q'(s_A)$ is strictly increasing, and by Step~1, $c(p)<\bar c$.
Hence
\[
Q'(s_A)\big|_{c=c(p)}<Q'(s_A)\big|_{c=\bar c}<0.
\]
This proves the lemma.
\end{proof}



\begin{lemma}[part of Lemma~\ref{lem: g changes sign 3 times} (ii): $g(s_{A})>0$]
    For all $p>3$ we have $g(s_{A})>0$.
\end{lemma}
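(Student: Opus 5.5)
The plan mirrors the proof of Lemma~\ref{lem:Qprime-sA}. Write $x:=s_A^{\,p}=\left(\frac{p-1}{p+1}\right)^p$ and use the bounds $\frac18<x<\frac{17}{125}$ recalled there. From the expansion \eqref{eq:g-expansion}, every monomial of $g(s_A)$ is $s_A^{\alpha}$ with $\alpha\in\{1,2\}$, $\alpha=p+j$ ($j\in\{-1,0,1,2\}$) or $\alpha=2p+j$ ($j\in\{-2,-1,0,1\}$). Each such power equals $x^k$, $k\in\{0,1,2\}$, times a rational function of $p$. Hence
\[
g(s_A)=R(p)\,\bigl(\widetilde A(p,c)x^2+\widetilde B(p,c)x+\widetilde C(p,c)\bigr)
\]
for an explicit positive rational prefactor $R(p)$. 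The coefficients $\widetilde A,\widetilde B,\widetilde C$ are polynomials in $p$, affine in $c$, since every $A_\alpha$ is affine in $c$.

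\textbf{Step 1 (reduce $c$ to a rational endpoint).} Treat $c$ as a free parameter. The expression $g(s_A)$ is affine in $c$, so $\partial_c g(s_A)$ is $R(p)$ times a quadratic $v_p(x)$ in $x$ whose coefficients are polynomials in $p$. I will show that $v_p$ has a definite sign for $x\in[\frac18,\frac{17}{125}]$. To do this I first check the monotonicity of $v_p$ on that interval via the sign of $v_p'$ at the endpoint, then evaluate $v_p$ at the appropriate endpoint. Each check reduces to showing a polynomial in $q=p-3$ has coefficients of one sign.

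The sign of $v_p$ decides which bound from Lemma~\ref{lem: bounds for c} to substitute. If $g(s_A)$ is increasing in $c$, I replace $c$ by $\underline c=\frac{5}{4p-3}$. If it is decreasing, I use $\bar c=\frac{4}{3p-2}$. A heuristic look at the dominant coefficients $A_{p-1},A_p>0$, which grow with $c$, suggests the increasing case. If the bounds on $c$ are too crude, I will fall back on sharper rational bounds for $c(p)$, obtained the same way Lemma~\ref{lem: bounds for c} was proved.

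\textbf{Step 2 (polynomial inequality).} With $c$ replaced by a rational function of $p$, $g(s_A)$ becomes a positive factor times a quadratic $G_p(x)=A(p)x^2+B(p)x+C(p)$ with polynomial coefficients. I will show $G_p>0$ on $[\frac18,\frac{17}{125}]$. First I establish monotonicity: the sign of $A(p)$ and of $G_p'$ at one endpoint, each checked through $q$-expansions with positive coefficients. Then I evaluate $G_p$ at the worse endpoint, which gives the polynomial $N(p)=G_p(\text{endpoint})\cdot(\text{denominator})$, and expand it at $p=3$.

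\textbf{Main obstacle.} The hardest part will be the margin. The quantity $g(s_A)$ might be positive by only a small amount near $p=3$, where $x\approx\frac18$. In that case either the crude $c$-bound or the crude $x$-interval could destroy positivity, and the $q$-expansion of $N$ would then have mixed signs. If this happens, I would first tighten the $x$-interval by splitting the $p$-range, for instance $p\in(3,5]$ versus $p\ge5$, using sharper bounds on $\left(\frac{p-1}{p+1}\right)^p$ on each piece. If that is still not enough, I would use a two-sided rational bound for $c$ that is tighter near $p=3$. The remaining verifications are mechanical expansions of the kind the paper checked with Grok.
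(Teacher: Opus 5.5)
Your plan is essentially the paper's proof. Like the paper, you write $Q''(s_A)$ (equivalently $g(s_A)$) as a quadratic in $\rho=s_A^{p}\in(\tfrac18,\tfrac17)$ with coefficients affine in $c$, and show the $c$-coefficient is positive there, so the expression is increasing in $c$; the paper gets this positivity from concavity plus positivity at $\rho=\tfrac18,\tfrac17$. You then substitute $c_0=\tfrac{5}{4p-3}$ and check the resulting quadratic at the endpoints via expansions in $q=p-3$.

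One detail must change. After substituting $c_0$, the $x^2$-coefficient of your $G_p$ (the paper's $c_0a_2+b_2$, with numerator $-18p^6+394p^5+\dots$) changes sign near $p\approx 23$, so no one-signed $q$-expansion can certify its sign as your Step 2 intends. The paper instead uses that the linear coefficient $c_0a_1+b_1$ is positive and splits into two cases, increasing or concave, both of which reduce to checking the two endpoint values.
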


\begin{proof}

First we show $g(s_{A})>0$ i.e., $Q''(s_A)>0$. Set
\[
\rho:=s_A^{\,p}=\left(\frac{p-1}{p+1}\right)^{p}\in(0,1).
\]

\medskip
\noindent\textbf{Step 1: Bounds on $\rho$.}
Define $X(p):=\left(\frac{p-1}{p+1}\right)^p$ for $p>1$. Then
\[
\frac{d}{dp}\log X(p)
=\log\!\left(\frac{p-1}{p+1}\right)+\frac{2p}{p^2-1}
=
-\log\!\left(\frac{p+1}{p-1}\right)+\left(\frac1{p-1}+\frac1{p+1}\right).
\]
Since $t\mapsto \frac1t$ is strictly convex on $(0,\infty)$, the trapezoid rule gives
\[
\int_{p-1}^{p+1}\frac{dt}{t}
\;<\;\frac{2}{2}\left(\frac1{p-1}+\frac1{p+1}\right)
=\frac1{p-1}+\frac1{p+1}.
\]
But the left-hand side equals $\log\!\left(\frac{p+1}{p-1}\right)$, hence
$\frac{d}{dp}\log X(p)>0$. Thus $X(p)$ is strictly increasing on $(1,\infty)$.
In particular, for $p>3$,
\[
x=X(p)>\,X(3)=\left(\frac{2}{4}\right)^3=\frac18.
\]
Also $\displaystyle \lim_{p\to\infty}X(p)=e^{-2}$ (standard limit), hence
$x<X(p)<e^{-2}$ for all finite $p$.
Moreover $e^{-2}<\frac{17}{125}$.
Therefore we have the uniform bounds
\begin{equation}\label{eq:x-bounds-sA}
\frac18<x<\frac{17}{125}<\frac{1}{7}\qquad(p>3).
\end{equation}

\medskip
\noindent\textbf{Step 2: An explicit formula for $Q''(s_A)$.}
A direct differentiation of the explicit formula for $Q(s)$ (given in the text) and substitution $s=s_A$
yields the following representation:
\begin{equation}\label{eq:QppSA-reduction}
Q''(s_A)=\frac{2p}{p^2-1}\Bigl[(c\,a_2(p)+b_2(p))\rho^2+(c\,a_1(p)+b_1(p))\rho+(c\,a_0(p)+b_0(p))\Bigr],
\end{equation}
where
\begin{align*}
a_2(p)&:=-(p-1)(p+1)^2\bigl(216p^4-60p^3-173p^2+23p+24\bigr),\\
a_1(p)&:=p(p-1)^2\bigl(72p^4+180p^3+75p^2-106p-69\bigr),\\
a_0(p)&:=-28p(p-1)^3(p+1),\\
b_2(p)&:=270p^6+393p^5-168p^4-398p^3-50p^2+77p+20,\\
b_1(p)&:=-72p^6-72p^5+98p^4+80p^3-4p^2-24p-6,\\
b_0(p)&:=-18p^5+66p^4-26p^3-70p^2+44p+4.
\end{align*}
Since $\frac{2p}{p^2-1}>0$ for $p>3$, it suffices to prove that the bracket in \eqref{eq:QppSA-reduction}
is positive.

\medskip
\noindent\textbf{Step 3: The bracket is increasing in $c$ for $\rho\in[1/8,1/7]$.}
Let
\[
F(\rho):=a_2(p)\rho^2+a_1(p)\rho+a_0(p).
\]
Then the bracket in \eqref{eq:QppSA-reduction} equals $c\,F(\rho)+G(\rho)$, where $G(\rho):=b_2\rho^2+b_1\rho+b_0$.
Because $a_2(p)<0$ for $p>3$, $F$ is concave in $\rho$.
Compute
\[
F\Bigl(\frac18\Bigr)=\frac{a_2+8a_1+64a_0}{64}
=\frac{(p-1)}{64}\,\Bigl(360p^6+492p^5-2555p^4+727p^3+2191p^2-1311p-24\Bigr),
\]
and
\[
F\Bigl(\frac17\Bigr)=\frac{a_2+7a_1+49a_0}{49}
=\frac{2(p-1)}{49}\,\Bigl(144p^6+192p^5-1015p^4+244p^3+867p^2-480p-12\Bigr).
\]
Writing $p=3+t$ with $t>0$,
\begin{align*}
360p^6+492p^5-2555p^4+727p^3+2191p^2-1311p-24
&=360t^6+6972t^5+53425t^4+208747t^3\\
&\quad+441004t^2+479664t+210432>0,\\
144p^6+192p^5-1015p^4+244p^3+867p^2-480p-12
&=144t^6+2784t^5+21305t^4+83104t^3\\
&\quad+175053t^2+189402t+82356>0.
\end{align*}
Hence $F(1/8)>0$ and $F(1/7)>0$ for all $p>3$.
By concavity, $F(\rho)>0$ for every $\rho\in[1/8,1/7]$, and therefore the bracket in
\eqref{eq:QppSA-reduction} is strictly increasing as a function of $c$ on that interval.

\medskip
\noindent\textbf{Step 4: A convenient lower bound for $c(p)$.}
Set
\[
c_0:=\frac{5}{4p-3}.
\]
We claim that $c(p)>c_0$ for all $p>3$.
Indeed,
\[
\frac{2\ln2}{(p-2)\ln3+2\ln2}>\frac{5}{4p-3}
\iff
(8\ln2-5\ln3)(p-2)>0,
\]
and $8\ln2-5\ln3=\ln(256/243)>0$, so the claim holds for $p>2$, in particular for $p>3$.

Since the bracket in \eqref{eq:QppSA-reduction} is increasing in $c$ (Step~3), it is enough to show
that it is positive at $c=c_0$.

\medskip
\noindent\textbf{Step 5: Positivity at $c=c_0$ on $\rho\in[1/8,1/7]$.}
Define
\[
E_0(\rho):=(c_0a_2+b_2)\rho^2+(c_0a_1+b_1)\rho+(c_0a_0+b_0).
\]
Then $E_0$ is a quadratic polynomial in $\rho$.
Moreover,
\[
c_0a_1+b_1=\frac{(p-1)\bigl(72p^6+180p^5-277p^4-631p^3+203p^2+279p-18\bigr)}{4p-3}>0
\qquad (p>3),
\]
since with $p=3+t$,
\begin{align*}
&72p^6+180p^5-277p^4-631p^3+203p^2+279p-18\\
&=72t^6+1476t^5+12143t^4+51125t^3+115646t^2+132420t+59400>0.
\end{align*}

If $c_0a_2+b_2\ge 0$, then for $\rho\ge 0$ we have $E_0'(\rho)=2(c_0a_2+b_2)\rho+(c_0a_1+b_1)\ge c_0a_1+b_1>0$,
so $E_0$ is increasing on $[0,\infty)$ and hence on $[1/8,1/7]$.
If $c_0a_2+b_2\le 0$, then $E_0$ is concave and its minimum on $[1/8,1/7]$ is attained at an endpoint.
Thus in all cases
\[
E_0(\rho)\ge \min\left\{E_0\Bigl(\frac18\Bigr),\,E_0\Bigl(\frac17\Bigr)\right\}
\qquad\text{for every }\rho\in\Bigl[\frac18,\frac17\Bigr].
\]

A direct simplification gives
\[
E_0\Bigl(\frac18\Bigr)=\frac{N_8(p)}{32(4p-3)},
\qquad
N_8(p)=288p^7-1881p^6+4065p^5-1899p^4-3331p^3+3490p^2-378p-282,
\]
and
\[
E_0\Bigl(\frac17\Bigr)=\frac{N_7(p)}{49(4p-3)},
\qquad
N_7(p)=504p^7-2790p^6+5917p^5-3114p^4-4466p^3+5314p^2-819p-402.
\]
Writing $p=3+t$ with $t>0$,
\begin{align*}
N_8(3+t)&=288t^7+4167t^6+24639t^5+77301t^4+140471t^3+152764t^2+99024t+32640>0,\\
N_7(3+t)&=504t^7+7794t^6+50953t^5+185271t^4+412936t^3+576616t^2+474648t+178320>0.
\end{align*}
Hence $E_0(1/8)>0$ and $E_0(1/7)>0$ for all $p>3$, so $E_0(\rho)>0$ for all $\rho\in[1/8,1/7]$.

\medskip
\noindent\textbf{Step 6: Conclusion.}
For $p>3$ we have $\rho\in(1/8,1/7)$ (Step~1), $c(p)>c_0$ (Step~4), and the bracket in
\eqref{eq:QppSA-reduction} is increasing in $c$ (Step~3). Therefore,
\[
(ca_2+b_2)\rho^2+(ca_1+b_1)\rho+(ca_0+b_0)
>
(c_0a_2+b_2)\rho^2+(c_0a_1+b_1)\rho+(c_0a_0+b_0)
=E_0(\rho)>0.
\]
Using \eqref{eq:QppSA-reduction} and $\frac{2p}{p^2-1}>0$, we conclude $Q''(s_A)>0$.
\end{proof}


\begin{lemma}[the remaining part of Lemma~\ref{lem: g changes sign 3 times} (ii): $g(r)<0$]\label{lem:Qpp-r-negative}
Let $p>3$,
\[
c=c(p)=\frac{2\ln 2}{(p-2)\ln 3+2\ln 2},
\qquad
r=\frac{p}{p+1}.
\]
Then
\[
Q''(r)<0.
\]
\end{lemma}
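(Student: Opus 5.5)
Our plan is to copy the template of the two preceding lemmas ($Q'(s_A)<0$ and $g(s_A)>0$). First we evaluate $Q''(r)$ at $r=\frac{p}{p+1}$ explicitly. Differentiate the formula for $Q$ term by term, substitute $s=r$, and factor out $r^{p}$ powers. The monomials of $Q''$ have exponents $2p+1,2p,2p-1,2p-2,p+2,p+1,p,p-1,1,0$. So after multiplying by a suitable power of $r^{-1}$, $Q''(r)$ becomes a quadratic in $\rho:=r^{p}=\left(\frac{p}{p+1}\right)^{p}$ whose coefficients are rational functions of $p$ and affine in $c$:
\begin{align*}
Q''(r)=\kappa(p)\Bigl[(c\,\tilde a_2+\tilde b_2)\rho^2+(c\,\tilde a_1+\tilde b_1)\rho+(c\,\tilde a_0+\tilde b_0)\Bigr],
\end{align*}
where $\kappa(p)>0$ and $\tilde a_i,\tilde b_i$ are polynomials in $p$. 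We would compute these with computer algebra (Grok), as was done for $s_A$.

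Second, we bound $\rho$ uniformly. Consider $\log X(p)=p\log\frac{p}{p+1}$. Its derivative is $\log\frac{p}{p+1}+\frac1{p+1}$, which is negative, so $X$ is decreasing. Hence
\begin{align*}
e^{-1}<\rho<\left(\tfrac34\right)^3=\tfrac{27}{64}.
\end{align*}
Since $e^{-1}>\frac{36}{100}$, it suffices to work on a fixed interval such as $\rho\in[\frac{9}{25},\frac{27}{64}]$.

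Third, we remove $c$. The bracket is affine in $c$ with slope $F(\rho)=\tilde a_2\rho^2+\tilde a_1\rho+\tilde a_0$. We would establish a sign of $F$ on the $\rho$-interval as in Step 3 of the $g(s_A)$ proof: check concavity or convexity from the sign of $\tilde a_2$, then evaluate at the two endpoints. Depending on the sign, we replace $c$ by the bound from Lemma~\ref{lem: bounds for c} that makes the bracket largest: $\frac{4}{3p-2}$ if $F>0$, $\frac{5}{4p-3}$ if $F<0$. We then need to show the resulting quadratic $E(\rho)$ is negative on the interval. The argument mirrors Step 5. If the leading coefficient of $E$ is nonnegative, $E$ is convex and it suffices to check both endpoints. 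Otherwise we show monotonicity via the sign of the linear coefficient. The endpoint values are rational functions of $p$ whose numerators we expand in $q=p-3$ and show have all coefficients of one sign.

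The main obstacle is that $g(r)$ is close to zero. The point $r$ was chosen to separate $s_2$ from $s_3$ and satisfies $W(r)<0$, so the margin may be thin. Two problems can follow. The crude bounds on $c$ from Lemma~\ref{lem: bounds for c} may be too lossy, and the crude $\rho$-interval may fail near $p=3$ or as $p\to\infty$. If the substitution fails, we would try two fixes first. One is to sharpen the $c$ bounds with better rational approximations to $\ln 3/\ln 2$, as in the $Q'(0)$ proof. The other is to split the range of $p$ into $3<p\le p_0$ and $p>p_0$, using a tighter $\rho$-interval on each piece (for large $p$, $\rho$ is near $e^{-1}$). We also need to handle the $p\to\infty$ limit of the leading $p$-coefficient separately to make sure it has the correct sign.
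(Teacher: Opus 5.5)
Your setup is the same as the paper's. You write $Q''(r)$ as a quadratic in $x=r^{p}$ with coefficients affine in $c$, use $\frac{9}{25}<x\le\frac{27}{64}$, and reduce every check to a polynomial in $p$ expanded at $p=3$. (The exponents of $Q''$ are $2p,2p-1,2p-2,2p-3,p+1,p,p-1,p-2,1,0$, not the ones you list; this is harmless.)

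\textbf{Where the plan breaks.} Your third step fails: the $c$-slope $F(\rho)$ has no fixed sign on $[\frac{9}{25},\frac{27}{64}]$. For example, at $p=4$ the $c$-coefficient of $Q''(r)$ is $-1232010\,\rho^{2}+472010.4\,\rho-9216$. This is about $+1.0\times10^{3}$ at $\rho=\frac{9}{25}$ and about $-2.9\times10^{4}$ at $\rho=\frac{27}{64}$. The same happens as $p\to3^{+}$, where the slope is still positive at $\rho=e^{-1}$ (its larger root is about $0.371$). So there is no single ``worst'' bound on $c$ over your $\rho$-interval. Your route would then really need the heavier fallback of splitting the $p$-range with $p$-dependent $\rho$-intervals. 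You have also misdiagnosed the obstacle: the difficulty is not a thin margin in $g(r)$ or lossy bounds on $c$, because the crude bounds are already enough.

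\textbf{The missing idea.} You never need the sign of $F$. For each fixed $\rho$ the bracket is affine in $c$, so it is negative on $[c_-,c_+]$ as soon as it is negative at both endpoints. The paper proves exactly this with $c_1=\frac{5}{4p-3}$ and $c_2=\frac{2}{p}$; any upper bound in between, such as your $\frac{4}{3p-2}$, then follows automatically.
\begin{itemize}
\item At $c_1$, both the $x^{2}$- and $x$-coefficients are positive. The quadratic is therefore increasing, and it suffices to check $q_1(\frac{27}{64})<0$.
\item At $c_2$, the quadratic is concave but its linear coefficient is positive. So monotonicity cannot come from the sign of the linear coefficient, as in your Step 5 template. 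Instead one shows $q_2'(\frac{9}{25})<0$, so $q_2$ is decreasing on $[\frac{9}{25},\infty)$, and then checks $q_2(\frac{9}{25})<0$.
\end{itemize}
Each endpoint check becomes a one-variable polynomial in $p$ with one-signed coefficients in $p-3$, or a monotone one. No sharper approximation of $\log_2 3$ and no splitting of the $p$-range is needed.
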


\begin{proof}
Set
\[
x:=r^{p}=\Bigl(\frac{p}{p+1}\Bigr)^{p}\in(0,1).
\]
A direct differentiation of the explicit formula for $Q(s)$ (given in the manuscript) shows that,
after evaluating at $s=r$ and grouping the terms with powers $r^{2p}$ and $r^{p}$,
one can write
\begin{equation}\label{eq:Qpp-quadratic}
Q''(r)=U(p,c)\,x^{2}+V(p,c)\,x+W(p,c),
\end{equation}
where the coefficients are (affine in $c$)
\begin{align}
U(p,c)
&=
\Bigl(540p^{5}-6p^{4}-852p^{3}+200p^{2}+456p-194-\frac{96}{p}+\frac{48}{p^{2}}\Bigr)
\label{eq:U-def}\\
&\qquad
+c\Bigl(-432p^{6}+408p^{5}+694p^{4}-898p^{3}-178p^{2}+622p-132-\frac{132}{p}+\frac{48}{p^{2}}\Bigr),\nonumber\\[0.3em]
V(p,c)
&=
\frac{2}{p(p+1)}\Bigl(
c(72p^{8}+36p^{7}-147p^{6}-11p^{5}+114p^{4}-57p^{3}-37p^{2}+30p)\label{eq:V-def}\\
&\hspace{7em}
+(-90p^{7}-99p^{6}+82p^{5}+24p^{4}-66p^{3}+35p^{2}+30p-12)
\Bigr),\nonumber\\[0.3em]
W(p,c)
&=
4p\Bigl(-9p^{3}+27p^{2}-10p-2+c(-14p^{3}+22p^{2}-8p)\Bigr).
\label{eq:W-def}
\end{align}
In particular, for fixed $p$ and $x$, the right-hand side of \eqref{eq:Qpp-quadratic} is \emph{affine}
in $c$.

\medskip
\noindent\textbf{Step 1: bounds for $x=r^{p}$.}
Define $\phi(p)=p\ln(1+1/p)$, so that $x(p)=e^{-\phi(p)}$.
Then
\[
\phi'(p)=\ln(1+1/p)-\frac{1}{p+1}>0
\]
(using $\ln(1+u)>\frac{u}{1+u}$ for $u>0$), hence $\phi$ is increasing and $x(p)$ is decreasing.
Therefore for $p\ge 3$,
\begin{equation}\label{eq:x-upper}
x(p)\le x(3)=\Bigl(\frac{3}{4}\Bigr)^{3}=\frac{27}{64}.
\end{equation}
Also, since $\ln(1+u)<u$ for $u>0$, we have $p\ln(1+1/p)<1$, hence $(1+1/p)^{p}<e$
and therefore
\begin{equation}\label{eq:x-lower-e}
x(p)=\frac{1}{(1+1/p)^{p}}>\frac{1}{e}>9/25.
\end{equation}

Thus, for all $p>3$,
\begin{equation}\label{eq:x-bounds-r}
\frac{9}{25}<x(p)\le \frac{27}{64}.
\end{equation}

Next recall that
\begin{equation}\label{eq:c-bounds-basic}
\frac{5}{4p-3}\le c(p)\le \frac{2}{p}\qquad(p>3).
\end{equation}

\noindent\textbf{Step 2: reduce to two endpoint values of $c$.}
Fix $p>3$ and $x=x(p)$.
Because $Q''(r)$ is affine in $c$ by \eqref{eq:Qpp-quadratic},
and because $c(p)\in[c_{1},c_{2}]$ where
\[
c_{1}:=\frac{5}{4p-3},
\qquad
c_{2}:=\frac{2}{p},
\]
it suffices to show
\[
Q''(r)\big|_{c=c_{1}}<0
\quad\text{and}\quad
Q''(r)\big|_{c=c_{2}}<0.
\]
We prove these two inequalities separately.

\medskip
\noindent\textbf{Step 3: endpoint $c=c_{1}=5/(4p-3)$.}
Substitute $c=c_{1}$ into \eqref{eq:Qpp-quadratic} and write
\[
Q''(r)\big|_{c=c_{1}}=q_{1}(x):=U_{1}(p)x^{2}+V_{1}(p)x+W_{1}(p),
\]
where the coefficients simplify to
\begin{align*}
U_{1}(p)
&=
\frac{2\bigl(198p^{7}+40p^{6}-567p^{5}+167p^{4}+483p^{3}-231p^{2}-90p+48\bigr)}
{p^{2}(4p-3)},\\[0.2em]
V_{1}(p)
&=
\frac{2\bigl(54p^{7}-110p^{6}-205p^{5}+234p^{4}+53p^{3}-170p^{2}+12p+36\bigr)}
{p(4p^{2}+p-3)},\\[0.2em]
W_{1}(p)
&=
\frac{4p\bigl(-36p^{4}+65p^{3}-11p^{2}-18p+6\bigr)}{4p-3}.
\end{align*}
We claim $U_{1}(p)>0$ and $V_{1}(p)>0$ for all $p\ge 3$.
Indeed, setting $p=3+y$ with $y\ge 0$,
\begin{align*}
&198p^{7}+40p^{6}-567p^{5}+167p^{4}+483p^{3}-231p^{2}-90p+48
\\&=198y^{7}+4198y^{6}+37575y^{5}+184172y^{4}+534387y^{3}+919038y^{2}+868680y+348672>0,\\
&54p^{7}-110p^{6}-205p^{5}+234p^{4}+53p^{3}-170p^{2}+12p+36\\
&=54y^{7}+1024y^{6}+8021y^{5}+33339y^{4}
+78101y^{3}+99505y^{2}+57852y+7020>0.
\end{align*}
Since the denominators are also positive for $p>3$, this proves $U_{1}(p)>0$ and $V_{1}(p)>0$.
Therefore
\[
q_{1}'(x)=2U_{1}(p)x+V_{1}(p)>0\qquad(x>0),
\]
so $q_{1}$ is strictly increasing in $x$.
Using \eqref{eq:x-upper} we get
\[
Q''(r)\big|_{c=c_{1}}=q_{1}(x(p))\le q_{1}\Bigl(\frac{27}{64}\Bigr).
\]
A direct computation yields
\[
q_{1}\Bigl(\frac{27}{64}\Bigr)
=
\frac{N_{1}(p)}{2048\,p^{2}(4p^{2}+p-3)},
\]
where
\begin{align*}
N_{1}(p)=\;&-57258p^{8}+220990p^{7}-296055p^{6}-124816p^{5}+467130p^{4}\\
&\qquad-60900p^{3}-213273p^{2}+31590p+34992.
\end{align*}
The denominator is positive for $p>3$, so it suffices to prove $N_{1}(p)<0$ for $p>3$.
Differentiate:
\[
N_{1}'(p)=-2\,G(p),
\]
where
\[
G(p)=229032p^{7}-773465p^{6}+888165p^{5}+312040p^{4}-934260p^{3}
+91350p^{2}+213273p-15795.
\]
Again writing $p=3+y$ ($y\ge 0$) gives
\begin{align*}
G(3+y)=&229032y^{7}+4036207y^{6}+30252843y^{5}+125651980y^{4}
+314379690y^{3}\\
&+477328041y^{2}+409981824y+154357488>0.
\end{align*}
Hence $G(p)>0$ for $p\ge 3$, so $N_{1}'(p)<0$ for $p\ge 3$, i.e.\ $N_{1}$ is strictly decreasing on $[3,\infty)$.
Finally,
\[
N_{1}(3)=-104115456<0,
\]
so $N_{1}(p)<0$ for all $p>3$. Therefore $q_{1}(27/64)<0$, hence
\[
Q''(r)\big|_{c=c_{1}}<0.
\]

\medskip
\noindent\textbf{Step 4: endpoint $c=c_{2}=2/p$.}
Substitute $c=c_{2}$ into \eqref{eq:Qpp-quadratic} and write
\[
Q''(r)\big|_{c=c_{2}}=q_{2}(x):=U_{2}(p)x^{2}+V_{2}(p)x+W_{2}(p).
\]
The leading coefficient simplifies and factors as
\[
U_{2}(p)=-\frac{2(p+1)}{p^{3}}\,A_{2}(p),
\qquad
A_{2}(p):=162p^{7}-567p^{6}+299p^{5}+499p^{4}-549p^{3}+24p^{2}+156p-48.
\]
With $p=3+y$ ($y\ge 0$) we have
\[
A_{2}(3+y)=162y^{7}+2835y^{6}+20711y^{5}+81529y^{4}+185439y^{3}+240540y^{2}+160464y+39840>0,
\]
so $A_{2}(p)>0$ for $p\ge 3$, hence $U_{2}(p)<0$ for $p>3$.
Thus $q_{2}$ is concave in $x$ and $q_{2}'(x)=2U_{2}(p)x+V_{2}(p)$ is strictly decreasing in $x$.

We claim $q_{2}'(9/25)<0$ for $p>3$.
Indeed one computes
\[
q_{2}'\Bigl(\frac{9}{25}\Bigr)
=
-\frac{2}{25\,p^{3}(p+1)}\,R_{2}(p),
\]
where
\begin{align*}
R_{2}(p)=\;&1566p^{9}-3699p^{8}-6814p^{7}+9490p^{6}+9414p^{5}-8375p^{4}\\
&\qquad-5110p^{3}+3984p^{2}+1080p-864.
\end{align*}
With $p=3+y$ ($y\ge 0$),
\begin{align*}
R_{2}(3+y)=\;&1566y^{9}+38583y^{8}+411794y^{7}+2485936y^{6}+9282096y^{5}\\
&\qquad+21949213y^{4}+32108144y^{3}+26596356y^{2}+9612792y+79920>0,
\end{align*}
so $R_{2}(p)>0$ for $p\ge 3$ and hence $q_{2}'(9/25)<0$ for $p>3$.
Since $q_{2}'$ is decreasing in $x$, it follows that
\[
q_{2}'(x)\le q_{2}'\Bigl(\frac{9}{25}\Bigr)<0
\qquad\text{for all }x\ge \frac{9}{25},
\]
so $q_{2}$ is strictly decreasing on $[9/25,\infty)$.
Thus 
\[
Q''(r)\big|_{c=c_{2}}=q_{2}(x(p))\le q_{2}\Bigl(\frac{9}{25}\Bigr).
\]
Finally, one computes
\[
q_{2}\Bigl(\frac{9}{25}\Bigr)
=
-\frac{2}{625\,p^{3}(p+1)}\,P_{2}(p),
\]
where
\begin{align*}
P_{2}(p)=\;&972p^{9}-2358p^{8}+5687p^{7}+1230p^{6}+4138p^{5}-6300p^{4}\\
&\qquad-18045p^{3}+12528p^{2}+4860p-3888.
\end{align*}
With $p=3+y$ ($y\ge 0$),
\begin{align*}
P_{2}(3+y)=\;&972y^{9}+23886y^{8}+264023y^{7}+1730937y^{6}+7456057y^{5}\\
&\qquad+21986871y^{4}+44499348y^{3}+59538402y^{2}+47542464y+17126640>0,
\end{align*}
so $P_{2}(p)>0$ for $p\ge 3$ and therefore $q_{2}(9/25)<0$ for $p>3$.
Hence
\[
Q''(r)\big|_{c=c_{2}}<0.
\]

\medskip
\noindent\textbf{Step 5: conclude for $c=c(p)$.}
By \eqref{eq:c-bounds-basic} we have $c(p)\in[c_{1},c_{2}]$.
Since $Q''(r)$ is affine in $c$ by \eqref{eq:Qpp-quadratic},
and since it is negative at both endpoints $c=c_{1}$ and $c=c_{2}$ by Steps 3--4,
it follows that
\[
Q''(r)\big|_{c=c(p)}<0.
\]
This completes the proof.
\end{proof}

\begin{lemma}[the remainin part of Lemma~\ref{lem: tecnichal lemma about Q'sA and Hr}: $H(r)>0$]\label{lem:Hr-positive}
Let $p>3$, set
\[
r=\frac{p}{p+1},
\qquad
c=c(p)=\frac{2\ln 2}{(p-2)\ln 3+2\ln 2}.
\]
Let $Q$ be the pseudo--polynomial defined above (so that $P''(s)=s^{p-4}Q(s)$).
Then
\[
r\,Q''(r)-(p+1)\,Q'(r)>0.
\]
\end{lemma}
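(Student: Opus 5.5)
The plan is to copy the template of the two preceding lemmas ($Q'(s_A)<0$ and $Q''(r)<0$). The first step is to write $H(r)=rQ''(r)-(p+1)Q'(r)$ as a quadratic in $x:=r^{p}=(p/(p+1))^{p}$ with coefficients affine in $c$. This works because every monomial of $Q$ has exponent in one of the three groups $\{2p+2,2p+1,2p,2p-1\}$, $\{p+3,p+2,p+1,p\}$, $\{3,2,1,0\}$. The operator $s\frac{d^2}{ds^2}\cdot s$ lowers exponents by one and $\frac{d}{ds}$ does the same, so $H(s)$ is a sum of monomials $s^{2p+k}$, $s^{p+k}$, $s^{k}$ with small integer $k\in\{-2,\dots,1\}$. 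Evaluating at $s=r=p/(p+1)$ turns each $r^{k}$ into an explicit rational function of $p$. We therefore obtain
\[
H(r)=\tilde U(p,c)\,x^{2}+\tilde V(p,c)\,x+\tilde W(p,c),
\]
with $\tilde U,\tilde V,\tilde W$ rational in $p$ and affine in $c$. One can double-check that $H=g-(p+1)Q'$ together with the coefficients $A_\alpha$ of $g$ gives a shortcut: the $g$-part at $r$ is the quadratic \eqref{eq:Qpp-quadratic} multiplied by $r$, so only $Q'(r)$ needs to be expanded anew.

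Second, we use affinity in $c$ and the bracket $c(p)\in[c_1,c_2]=[5/(4p-3),2/p]$ from \eqref{eq:c-bounds-basic}. This reduces the claim to proving positivity at the two endpoints $c=c_1$ and $c=c_2$, exactly as in Step 2 of the proof of Lemma~\ref{lem:Qpp-r-negative}. If one of these endpoint inequalities turns out to be false, which is plausible since $2/p$ is a fairly loose upper bound, I will switch to the sharper bracket $c\in[5/(4p-3),4/(3p-2)]$ from Lemma~\ref{lem: bounds for c}.

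Third, for each endpoint $c_i$ we get a quadratic $\tilde q_i(x)$ with coefficients rational in $p$, and we need $\tilde q_i(x)>0$ for $x\in(9/25,27/64]$, using the bounds \eqref{eq:x-bounds-r}. I would determine the sign of the leading coefficient by substituting $p=3+y$ and checking that all coefficients of the numerator polynomial in $y$ have a common sign.
\begin{itemize}
\item If the quadratic is concave, it suffices to check the two endpoints $x=9/25$ and $x=27/64$.
\item If it is convex, I would show that $\tilde q_i'$ has a fixed sign on the interval, by evaluating $\tilde q_i'$ at the appropriate endpoint. This reduces the problem to a single endpoint evaluation.
\end{itemize}
Each endpoint value is a rational function of $p$. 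Positivity of its numerator is proved by expanding at $p=3+y$ and checking that all coefficients are positive. Where a few coefficients are negative, I would instead show the numerator is monotone through its derivative, as was done for $N_1$.

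The main obstacle I expect is tightness. Near $p=3$, $H(r)$ may be small relative to the errors introduced by replacing $c(p)$ with crude rational bounds and $x(p)$ with the interval $(9/25,27/64]$, so some endpoint polynomial may fail to be positive. In that case I would first sharpen the bounds on $x$, for instance $x\ge (1+1/p)^{-p}\ge e^{-1}(1+\tfrac{1}{2p})$, or split $p$ into the ranges $(3,4]$ and $[4,\infty)$ with separate numeric bounds. Only if that fails would I tighten the rational enclosure of $c(p)$ further.
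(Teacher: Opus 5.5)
\textbf{The reduction to the upper end of your $c$-bracket fails.} Your template matches the paper's: write $H(r)$ as a quadratic in a power of $r$ with coefficients affine in $c$, bracket $c(p)$, then prove one-variable positivity via $p=3+y$. But the endpoint inequality at the upper end of the bracket is false, not merely hard to prove.

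The $c$-coefficient of $H(r)$ is negative, so the upper endpoint is the binding one, and the margin is tiny. At $p=3$ one has $r=3/4$ and $x=r^p=27/64$ exactly. A direct computation from $Q$ gives
\[
H(3/4)=\frac{552692-986691\,c}{128},
\]
which is positive only for $c<0.56015\ldots$, while $c(3)=\ln 4/\ln 12\approx 0.55789$. Both $2/p=2/3$ and your fallback $4/(3p-2)=4/7\approx 0.5714$ exceed this threshold. By continuity, $H(r)\big|_{c=4/(3p-2)}<0$ for $p$ slightly above $3$.

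The same happens at $p=4$. There $H(4/5)\approx 22919.2-58384.1\,c$, which is positive at $c(4)=\ln 2/\ln 6\approx 0.3869$ but negative at $c=4/(3p-2)=0.4$. With $c=2/p$ the endpoint also fails for all large $p$. These values use the exact $x$, so neither sharpening the bounds on $x$ nor splitting the range of $p$ can rescue the step. The obstacle is not looseness in $x$, as you anticipated.

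\textbf{What is missing is a sharp rational upper bound on $c(p)$.} This is the item you leave as a vague last resort. Since $c(p)=2/\bigl(2+(p-2)\log_2 3\bigr)$, you need a rational lower bound $\log_2 3>a$ with $a$ at least about $1.57$. For comparison, the bound $4/(3p-2)$ corresponds to $a=3/2$.

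The paper uses $3^{12}>2^{19}$, i.e.\ $\log_2 3>19/12$, which gives $c(p)<24/(19p-14)$. At $p=3$ this equals $24/43\approx 0.55814$, just under the threshold. The paper also avoids a two-sided treatment of $x$:
\begin{itemize}
\item it writes $H(r)$ as a positive multiple of a quadratic $S$ in $u=r^{p-1}$;
\item it uses the one-sided bound $u\le 2p^2/(5p^2-5p+2)$, which is exact at $p=3$;
\item it shows $S$ is decreasing in $u$ and then in $c$;
\item it checks positivity at $(c,u)=\bigl(24/(19p-14),\,2p^2/(5p^2-5p+2)\bigr)$.
\end{itemize}
With an upper endpoint as sharp as $24/(19p-14)$, your affine two-endpoint reduction would be logically fine. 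As written, however, the proposal's reduction does not go through.
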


\begin{proof}
Define
\[
H(s):=sQ''(s)-(p+1)Q'(s).
\]
For a monomial $a s^\alpha$ one has
\[
\frac{d}{ds}(a s^\alpha)=a\alpha s^{\alpha-1},\qquad
\frac{d^2}{ds^2}(a s^\alpha)=a\alpha(\alpha-1)s^{\alpha-2},
\]
hence
\[
s\frac{d^2}{ds^2}(a s^\alpha)-(p+1)\frac{d}{ds}(a s^\alpha)
= a\alpha(\alpha-p-2)s^{\alpha-1}.
\]
Applying this term-by-term to the explicit formula for $Q$ and then substituting
$r=\frac{p}{p+1}$, we obtain the exact identity
\begin{equation}\label{eq:H-reduction}
H(r)=\frac{2(p-1)}{(p+1)^3}\,S(p,c,u),
\qquad
u:=r^{p-1}=\Bigl(\frac{p}{p+1}\Bigr)^{p-1}\in(0,1),
\end{equation}
where $S$ is a quadratic polynomial in $u$:
\[
S(p,c,u)=A(p,c)+B(p,c)\,u+C(p,c)\,u^2,
\]
with
\begin{align*}
A(p,c)={}&
30cp^7+30cp^6-54cp^5-70cp^4+8cp^3+40cp^2+16cp\\
&\quad +9p^7-24p^6-66p^5+57p^3+16p^2-16p-8,\\[2mm]
B(p,c)={}&
-72cp^7-64cp^6+157cp^5+148cp^4-80cp^3-77cp^2\\
&\quad -18p^7+45p^6+156p^5+8p^4-148p^3-51p^2+16p,\\[2mm]
C(p,c)={}&
-108cp^8+138cp^7+217cp^6-307cp^5-154cp^4+199cp^3+39cp^2-36cp\\
&\quad +135p^7-33p^6-294p^5+32p^4+207p^3+13p^2-36p.
\end{align*}
Since $\frac{2(p-1)}{(p+1)^3}>0$ for $p>3$, it suffices to prove
\begin{equation}\label{eq:S-positive-goal}
S(p,c(p),u(p))>0.
\end{equation}

\medskip
\noindent\textbf{Step 1: rational bounds for $c(p)$.}
Write $a:=\log_2 3=\frac{\ln 3}{\ln 2}$. Since
\[
3^5=243<256=2^8
\quad\text{and}\quad
3^{12}=531441>524288=2^{19},
\]
we have
\[
\frac{19}{12}<a<\frac85.
\]
Using $c(p)=\dfrac{2}{\,2+(p-2)a\,}$ and that $c$ is decreasing in $a$, it follows that
\begin{equation}\label{eq:c-bounds-sharp}
c_-:=\frac{5}{4p-3}<c(p)<c_+:=\frac{24}{19p-14}.
\end{equation}

\medskip
\noindent\textbf{Step 2: an explicit upper bound for $u(p)$.}
Let $n:=p-1>2$ and $x:=1/p\ge 0$. Consider
\[
f(x):=(1+x)^n-\Bigl(1+nx+\frac{n(n-1)}{2}x^2\Bigr).
\]
Then $f(0)=f'(0)=f''(0)=0$, and for $x\ge0$,
\[
f'''(x)=n(n-1)(n-2)(1+x)^{n-3}\ge 0.
\]
Hence $f''$ is increasing with $f''(0)=0$, so $f''\ge0$; thus $f'$ is increasing
with $f'(0)=0$, so $f'\ge0$; thus $f$ is increasing with $f(0)=0$, so $f\ge0$.
Therefore, for $x\ge0$,
\[
(1+x)^n\ge 1+nx+\frac{n(n-1)}{2}x^2.
\]
With $x=1/p$ and $n=p-1$ this gives
\[
\Bigl(1+\frac1p\Bigr)^{p-1}
\ge
1+\frac{p-1}{p}+\frac{(p-1)(p-2)}{2p^2}
=
\frac{5p^2-5p+2}{2p^2}.
\]
Taking reciprocals yields the bound
\begin{equation}\label{eq:u-upper}
u=\Bigl(\frac{p}{p+1}\Bigr)^{p-1}
=\frac{1}{(1+1/p)^{p-1}}
\le
U(p):=\frac{2p^2}{5p^2-5p+2}.
\end{equation}

\medskip
\noindent\textbf{Step 3: $S$ is decreasing in $u$ on $[0,U(p)]$ for $c=c(p)$.}
Since $S(p,c,u)=A+B u+C u^2$ is quadratic in $u$,
\[
\frac{\partial S}{\partial u}(p,c,u)=B(p,c)+2C(p,c)\,u
\]
is \emph{linear} in $u$. Hence it suffices to show
\begin{equation}\label{eq:Su-endpoints}
\frac{\partial S}{\partial u}(p,c(p),0)<0
\qquad\text{and}\qquad
\frac{\partial S}{\partial u}(p,c(p),U(p))<0.
\end{equation}

\smallskip
\emph{(i) The endpoint $u=0$.}
We have $\frac{\partial S}{\partial u}(p,c,0)=B(p,c)$. The coefficient of $c$ in $B$
equals
\[
-72p^7-64p^6+157p^5+148p^4-80p^3-77p^2
=
-p^2\Bigl(72p^5+64p^4-157p^3-148p^2+80p+77\Bigr).
\]
For $p=3+x$ with $x\ge0$,
\[
72p^5+64p^4-157p^3-148p^2+80p+77
=
72x^5+1144x^4+7091x^3+21335x^2+31025x+17426>0,
\]
so $B(p,c)$ is strictly decreasing in $c$ for all $p\ge3$.
Using $c(p)\ge c_-$ from \eqref{eq:c-bounds-sharp} gives $B(p,c(p))\le B(p,c_-)$.
A direct substitution $c=c_-=\frac{5}{4p-3}$ yields
\[
B(p,c_-)=
-\frac{p}{4p-3}\,N_B(p),
\]
where
\[
N_B(p):=72p^7+126p^6-169p^5-349p^4-124p^3+160p^2+168p+48.
\]
For $p=3+x$ with $x\ge0$,
\[
N_B(3+x)=72x^7+1638x^6+15707x^5+82166x^4+252638x^3+455074x^2+442767x+178626>0,
\]
hence $B(p,c_-)\!<0$, and therefore $B(p,c(p))<0$.

\smallskip
\emph{(ii) The endpoint $u=U(p)$.}
Set
\[
D(p,c):=B(p,c)+2C(p,c)\,U(p)=\frac{\partial S}{\partial u}(p,c,U(p)).
\]
A simplification gives
\[
D(p,c)=
-\frac{p}{5p^2-5p+2}\Bigl(M(p)\,c + K(p)\Bigr),
\]
with
\[
M(p):=432p^9-192p^8-908p^7+267p^6+789p^5+30p^4-467p^3-81p^2+154p.
\]
For $p=3+x$ with $x\ge0$,
\begin{align*}
\frac{M(3+x)}{3+x}
=&
432x^8+10176x^7+103924x^6+600819x^5+2150214x^4\\
&+4877544x^3
+6849487x^2+5445906x+1877824>0,
\end{align*}
so $M(p)>0$ for $p\ge3$, and thus $D(p,c)$ is strictly decreasing in $c$.
Using again $c(p)\ge c_-$ gives $D(p,c(p))\le D(p,c_-)$.
Substituting $c=c_-=\frac{5}{4p-3}$ yields
\[
D(p,c_-)= -\frac{p}{(4p-3)(5p^2-5p+2)}\,N_D(p),
\]
where
\[
N_D(p):=360p^9-342p^8-1363p^7+1452p^6+939p^5-982p^4-256p^3+8p^2+96p+96.
\]
For $p=3+x$ with $x\ge0$,
\begin{align*}
N_D(3+x)=&
360x^9+9378x^8+107069x^7+703125x^6+2926524x^5+8004428x^4\\&
+14383469x^3+16369613x^2+10703106x+3061824>0,
\end{align*}
hence $D(p,c_-)\!<0$, and therefore $D(p,c(p))<0$.

Combining (i) and (ii) proves \eqref{eq:Su-endpoints}. Since $\partial S/\partial u$
is linear in $u$, it follows that $\partial S/\partial u<0$ for all $u\in[0,U(p)]$,
so $S(p,c(p),u)$ is strictly decreasing on $[0,U(p)]$. Using $u(p)\le U(p)$
from \eqref{eq:u-upper} we obtain
\begin{equation}\label{eq:S-lower-by-U}
S(p,c(p),u(p))\ge S(p,c(p),U(p)).
\end{equation}

\medskip
\noindent\textbf{Step 4: replace $c(p)$ by the upper bound $c_+$.}
Since $S$ is affine in $c$, it suffices to compute its $c$--coefficient at $u=U(p)$.
A direct simplification gives
\[
\frac{\partial S}{\partial c}\bigl(p,c,U(p)\bigr)
=
-\frac{2p(p-1)}{(5p^2-5p+2)^2}\,N_2(p),
\]
where
\[
N_2(p):=216p^{10}-75p^9-174p^8+229p^7-140p^6-107p^5+179p^4+54p^3-72p^2-48p+32.
\]
For $p=3+x$ with $x\ge0$,
\begin{align*}
N_2(3+x)=&
216x^{10}+6405x^9+85281x^8+671593x^7+3464881x^6+12239092x^5\\&
+29980589x^4+50292735x^3+55296009x^2+35983743x+10524704>0,
\end{align*}
hence $\frac{\partial S}{\partial c}(p,c,U(p))<0$ for all $p>3$.
Therefore $S(p,c,U(p))$ is strictly decreasing in $c$.
Using $c(p)\le c_+$ from \eqref{eq:c-bounds-sharp} yields
\begin{equation}\label{eq:S-lower-by-cplus}
S(p,c(p),U(p))\ge S(p,c_+,U(p)),
\qquad
c_+=\frac{24}{19p-14}.
\end{equation}

\medskip
\noindent\textbf{Step 5: positivity at $(c_+,U(p))$.}
Substituting $c=c_+$ and $u=U(p)$ into $S=A+Bu+Cu^2$ simplifies to
\[
S(p,c_+,U(p))
=
\frac{(p-2)(p-1)}{(19p-14)(5p^2-5p+2)^2}\,R(p),
\]
where
\[
R(p):=747p^{10}-2469p^9+102p^8+4954p^7-1385p^6-2997p^5+2124p^4+380p^3-1096p^2+128p+224.
\]
For $p=3+x$ with $x\ge0$,
\begin{align*}
R(3+x)=&
747x^{10}+19941x^9+235974x^8+1627726x^7+7235131x^6+21607281x^5\\&
+43792452x^4+59291840x^3+51146048x^2+25304972x+5451008>0,
\end{align*}
hence $R(p)>0$ for all $p\ge3$, and therefore $S(p,c_+,U(p))>0$ for all $p>3$.

Finally, combining \eqref{eq:S-lower-by-U} and \eqref{eq:S-lower-by-cplus} gives
\[
S(p,c(p),u(p))\ge S(p,c(p),U(p))\ge S(p,c_+,U(p))>0,
\]
which proves \eqref{eq:S-positive-goal}. Returning to \eqref{eq:H-reduction} yields
$H(r)>0$, i.e.
\[
rQ''(r)-(p+1)Q'(r)>0.
\]
\end{proof}


\begin{lemma}[the proof of Lemma~\ref{lem: W(r)<0}: $W(r)<0$]
  For all $p>3$  we have $W(r)<0$.
\end{lemma}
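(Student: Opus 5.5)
We follow the template already used for $g(r)<0$ and $H(r)>0$: write $W(r)$ as a quadratic in $x:=r^{p}$ with coefficients affine in $c$, bound $x$ and $c$ by rational quantities, and reduce to polynomial inequalities in $p$ that are checked by the shift $p=3+y$.

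\textbf{Step 1: exact reduction.} We use $W(s)=\bigl(s\frac{d}{ds}-(p+1)\bigr)g(s)=\sum_\alpha A_\alpha(\alpha-p-1)s^\alpha$ with the coefficients $A_\alpha$ listed in the proof of Lemma~\ref{lem:g-at-most-3}. We substitute $s=r=\frac{p}{p+1}$ and group the ten exponents into three families $\{1,2\}$, $\{p-1,p,p+1,p+2\}$, $\{2p-2,\dots,2p+1\}$. This gives an identity
\[
W(r)=\widetilde U(p,c)\,x^{2}+\widetilde V(p,c)\,x+\widetilde W(p,c),\qquad x=r^{p},
\]
with $\widetilde U,\widetilde V,\widetilde W$ rational in $p$ and affine in $c$. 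The negative powers $r^{-1},r^{-2}$ are simply the rational factors $(p+1)/p$ and its square.

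\textbf{Step 2: reduction to endpoint values of $c$.} Since $W(r)$ is affine in $c$ and $c(p)\in[c_1,c_2]$ with $c_1=\frac{5}{4p-3}$ and $c_2=\frac{2}{p}$ (or $c_2=\frac{24}{19p-14}$ if more room is needed), it suffices to prove $W(r)|_{c=c_i}<0$ for $i=1,2$. For each endpoint, write the result as a quadratic $q_i(x)$ with rational coefficients in $p$.

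\textbf{Step 3: handling the $x$-dependence.} We know from \eqref{eq:x-bounds-r} that $\frac{9}{25}<x\le\frac{27}{64}$. For each $i$:
\begin{itemize}
\item Determine the sign of the leading coefficient of $q_i$ by expanding its numerator at $p=3+y$.
\item If $q_i$ is concave, show $q_i'$ has a fixed sign at the appropriate endpoint, so $q_i$ is monotone on the interval, and evaluate at the worse endpoint.
\item If $q_i$ is convex, it suffices to check both endpoints $x=\frac{9}{25}$ and $x=\frac{27}{64}$, since a convex function is at most its maximum over the endpoints.
\end{itemize}
Each check reduces to a polynomial $N(p)$ whose expansion at $p=3+y$ should have all coefficients of one sign. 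When a constant term has the wrong sign, we instead show $N$ is monotone via $N'$, as was done for $N_1$ earlier.

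\textbf{Main obstacle.} We expect the difficulty to be sharpness. The interval $[\frac{9}{25},\frac{27}{64}]$ for $x$ and the interval $[c_1,c_2]$ for $c$ may be too wide, in which case the endpoint polynomials would fail to have one-signed shifted coefficients. If so, we would do the following:
\begin{itemize}
\item Tighten $c$ to $\frac{5}{4p-3}<c<\frac{24}{19p-14}$.
\item Use a $p$-dependent bound for $x$, namely $x\le\bigl(\frac{2p^2}{5p^2-5p+2}\bigr)\frac{p}{p+1}$ from \eqref{eq:u-upper}, together with a matching lower bound from a Taylor estimate of $(1+1/p)^{p}$.
\end{itemize}
This keeps the final inequalities polynomial in $p$, at the cost of higher degree.
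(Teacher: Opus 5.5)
Your plan is essentially the paper's proof. Both write $W(r)$ as affine in $c$ and quadratic in $x=r^{p}$, check it at $c_L=\frac{5}{4p-3}$ and at a rational upper bound for $c$, control the $x$-dependence by monotonicity or convexity on a fixed $x$-interval, and reduce to one-variable polynomial sign checks. The paper uses the upper endpoint $\frac{4}{3p-2}$ and $x\in[\frac13,\frac{27}{64}]$; your choice $c_2=\frac{2}{p}$ also works, because there the constant term of $W(r)$ is positive but the coefficients of $x$ and $x^{2}$ are negative, so only $x=\frac{9}{25}$ needs checking. In either route the real substance is producing and verifying the explicit endpoint polynomials, which your sketch does not yet carry out.
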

\begin{proof}
Recall $H(s)=sQ''(s)-(p+1)Q'(s)$ and $W(s)=sH'(s)$. Differentiating gives
\[
H'(s)=sQ'''(s)-pQ''(s)
\quad\Longrightarrow\quad
W(s)=s^2Q'''(s)-psQ''(s).
\]
Since each coefficient of $Q(s)$ is affine linear in $c$ (hence in particular
linear in $c$ after differentiation), it follows that for fixed $p$ the quantity
$W(r)$ is an affine linear function of $c$.

\smallskip
Define the two rational endpoints
\[
c_L:=\frac{5}{4p-3},
\qquad
c_U:=\frac{4}{3p-2}.
\]
By Lemma~\ref{lem: bounds for c} we have $c(p)\in(c_L,c_U)$. Since $c\mapsto W(r)$ is
affine linear, it is enough to prove
\[
W(r)\big|_{c=c_L}<0
\qquad\text{and}\qquad
W(r)\big|_{c=c_U}<0.
\tag{$\ast$}
\]

\smallskip
Set $x:=r^p=\left(\dfrac{p}{p+1}\right)^p$. By \eqref{eq:x-upper} and \eqref{eq:x-lower-e}, we have
\[
x\in\left(\frac1e,\frac{27}{64}\right]\subset \left[\frac13,\frac{27}{64}\right].
\]

\smallskip
A direct (symbolic) computation from the explicit formula for $Q$ yields the
following closed forms (here $p>3$ and $x=r^p$):
\begin{align}
W(r)\big|_{c=c_L}
&=\frac{2(p-1)}{p(p+1)^2(4p-3)}\,P_L(p,x),\label{eq:W-cL}\\[0.3em]
W(r)\big|_{c=c_U}
&=-\frac{2(p-1)}{p(p+1)^2(3p-2)}\,P_U(p,x),\label{eq:W-cU}
\end{align}
where the (quadratic-in-$x$) polynomials are
\begin{align}
P_L(p,x):={}&\bigl(414 p^8 +100 p^7 -1493 p^6 -385 p^5 +1685 p^4 +201 p^3 -882 p^2 -48 p +144\bigr)x^2 \notag\\
&\quad +p\bigl(-72 p^8 -126 p^7 +101 p^6 +367 p^5 +150 p^4 -160 p^3 -136 p^2 +72 p +72\bigr)x \notag\\
&\quad +p^4\bigl(72 p^4 -34 p^3 -118 p^2 +12\bigr),\label{eq:PL}\\[0.5em]
P_U(p,x):={}&\bigl(54 p^9 -411 p^8 -260 p^7 +1422 p^6 +492 p^5 -1633 p^4 -250 p^3 +862 p^2 +60 p -144\bigr)x^2 \notag\\
&\quad +p\bigl(54 p^8 +117 p^7 -82 p^6 -344 p^5 -100 p^4 +177 p^3 +74 p^2 -80 p -48\bigr)x \notag\\
&\quad +p^4\bigl(-54 p^4 +14 p^3 +96 p^2 +20 p -8\bigr).\label{eq:PU}
\end{align}
Since the prefactor in~\eqref{eq:W-cL} is positive and the prefactor in~\eqref{eq:W-cU}
is negative, to prove $(\ast)$ it suffices to show
\[
P_L(p,x)<0
\quad\text{and}\quad
P_U(p,x)>0
\qquad\text{for all }p>3,\; x\in\left[\frac13,\frac{27}{64}\right].
\tag{$\ast\ast$}
\]

\smallskip
\noindent\textbf{Step 1: monotonicity in $x$.}
Because $P_L$ and $P_U$ are quadratic in $x$, their $x$-derivatives are affine in $x$.
Define the six one-variable polynomials (in $p$):
\begin{align*}
L_0(p)&:=9P_L\!\left(p,\frac13\right),&
L_1(p)&:=3\partial_x P_L\!\left(p,\frac13\right),&
L_2(p)&:=32\partial_x P_L\!\left(p,\frac{27}{64}\right),\\
U_0(p)&:=9P_U\!\left(p,\frac13\right),&
U_1(p)&:=3\partial_x P_U\!\left(p,\frac13\right),&
U_2(p)&:=32\partial_x P_U\!\left(p,\frac{27}{64}\right).
\end{align*}
Expanding gives
\begin{align*}
L_0(p)&=-216 p^9 + 684 p^8 + 97 p^7 -1454 p^6 +65 p^5 +1313 p^4 -207 p^3 -666 p^2 +168 p +144,\\
L_1(p)&=-216 p^9 +450 p^8 +503 p^7 -1885 p^6 -320 p^5 +2890 p^4 -6 p^3 -1548 p^2 +120 p +288,\\
L_2(p)=&-2304 p^9 +7146 p^8 +5932 p^7 -28567 p^6 -5595 p^5 +40375 p^4 +1075 p^3 -21510 p^2\\& +1008 p +3888,
\end{align*}
and
\begin{align*}
U_0(p)&=216 p^9 -546 p^8 -380 p^7 +1254 p^6 +372 p^5 -1174 p^4 -28 p^3 +622 p^2 -84 p -144,\\
U_1(p)&=270 p^9 -471 p^8 -766 p^7 +1812 p^6 +684 p^5 -2735 p^4 -278 p^3 +1484 p^2 -24 p -288,\\
U_2(p)=&3186 p^9 -7353 p^8 -9644 p^7 +27386 p^6 +10084 p^5 -38427 p^4 -4382 p^3 +20714 p^2\\&+84 p -3888.
\end{align*}

\smallskip
\noindent\textbf{Step 2: a Sturm-sign check for the one-variable polynomials.}
A Sturm chain computation (equivalently, root-counting via Sturm's theorem) shows:
\[
L_0(p)<0,\; L_1(p)<0,\; L_2(p)<0
\qquad\text{and}\qquad
U_0(p)>0,\; U_1(p)>0,\; U_2(p)>0
\quad\text{for all }p>3.
\tag{$\dagger$}
\]
We also present an alternative way to verify these inequalities: 

\noindent\textbf{Analysis of $L_0(p)$:} If $p\geq 4$ then
\begin{align*}
    L_0(p)&\leq -864p^8+ 684 p^8 + 97 p^7 -1454 p^6 +65 p^5 +1313 p^4 -207 p^3 -666 p^2 +168 p +144\\
    &=[-180p+97]p^7+[(-5816+65)p+1313]p^4+[-207p^2+168]p+[-666p^2+144]<0,
\end{align*}
and, if $p\in(3,4)$, then
\begin{align*}
    L_0(p)&\leq -648p^8+ 684 p^8 + 97 p^7 -1454 p^6 +65 p^5 +1313 p^4 -207 p^3 -666 p^2 +168 p +144\\
    &\leq[36(16)+97(4)-1454]p^6+65 p^5 +1313 p^4 -207 p^3 -666 p^2 +168 p +144\\
    &=-490p^6+65 p^5 +1313 p^4 -207 p^3 -666 p^2 +168 p +144\\
    &\leq -468p^6+1313 p^4 -207 p^3 -666 p^2 +168 p +144\\
    &\leq [-468(9)+1313]p^4+[-207p^2+168]p+[-666p^2+144]<0.
\end{align*}
{{Analysis of $L_1(p)$:}} For all $p>3$ we have
\begin{align*}
    L_1(p)&\leq -216 p^9 +450 p^8 +503 p^7 -1885 p^6 -320 p^5 +2890 p^4 -6 p^3 -1548 p^2 +120 p +288\\
    &\leq-648p^8 +450 p^8 +503 p^7 -1885 p^6 -320 p^5 +2890 p^4 -6 p^3 -1548 p^2 +120 p +288\\
    &\leq [-198(3)+503]p^7 -1885 p^6 -320 p^5 +2890 p^4 -6 p^3 -1548 p^2 +120 p +288\\
    &\leq[-198(3)+503]p^7+[-1885(9)+2890]p^4 -320 p^5-6 p^3+[(-1548(3)+120)p+288]<0
\end{align*}
{{Analysis of $L_2(p)$:}}
If $p\geq 4$ then
\begin{align*}
    L_2(p)\leq& [-9216+7146]p^8 +5932 p^7 -28567 p^6 -5595 p^5 +40375 p^4 +1075 p^3 -21510 p^2\\& +1008 p +3888,\\
    \leq& [-2070+5932]p^7+[-28567(16)-5595(4)+40375+1075/3]p^4\\
    &+[(-21510(3)+1008)p+3888]<0,
\end{align*}
and, if $p\in(3,4)$, then
\begin{align*}
  L_2(p)\leq& [-2304p^2+7146p+5932]p^7 -28567 p^6 -5595 p^5 +40375 p^4 +1075 p^3 -21510 p^2\\& +1008 p +3888\\
    \leq& [-2304(3)^2+7146(3)+5932]p^7 -28567 p^6 -5595 p^5 +40375 p^4 +1075 p^3 -21510 p^2\\& +1008 p +3888\\
    \leq& [6634p^2-28567p-5595]p^5+40375 p^4 +1075 p^3 -21510 p^2 +1008 p +3888\\
    \leq& [6634(16)-28567(4)-5595]p^5+40375 p^4 +1075 p^3 -21510 p^2 +1008 p +3888\\
    \leq& [-13719(3)+40375]p^4+1075 p^3 -21510 p^2 +1008 p +3888\\
    \leq&[-782(3)+1075]p^3+[(-21510(3)+1008)p+3888]<0
\end{align*}
since $-2304p^2+7146p+5932$ is decreasing in $(3,4)$ and $6634p^2-28567p-5595$ is increasing in $(3,4)$.
Analysis of $U_0(p)$. If $p\geq 4$, then
\begin{align*}
   U_0(p)\geq &(864-546)p^8 -380 p^7 +1254 p^6 +372 p^5 -1174 p^4 -28 p^3 +622 p^2 -84 p -144\\
   \geq & [318(4)-380]p^7+[1254(16)-1174]p^4 +[372(16)-28]p^3+[(622(4)-84)(4) -144]>0
\end{align*}
and, if $p\in(3,4)$, then 
\begin{align*}
   U_0(p)\geq &(648-546)p^8 -380 p^7 +1254 p^6 +372 p^5 -1174 p^4 -28 p^3 +622 p^2 -84 p -144\\
   \geq & [306-380]p^7++1254 p^6 +372 p^5 -1174 p^4 -28 p^3 +622 p^2 -84 p -144\\
   \geq & [(-74)4+1254]p^6+372 p^5 -1174 p^4 -28 p^3 +622 p^2 -84 p -144\\
   \geq & [958(9)-1174]p^4+[372(9)-28]p^3+[(622(3)-84)(3) -144]>0
\end{align*}
Analysis of $U_1(p)$. For all $p>3$ we have
\begin{align*}
    U_1(p)\geq& [810 -471]p^8 -766 p^7 +1812 p^6 +684 p^5 -2735 p^4 -278 p^3 +1484 p^2 -24 p -288\\
    \geq& [339(3)-766]p^7+[1812(9)-2735]p^4+[684(9)-278]p^3+[(1484(3)-24)3-288]>0.
\end{align*}
Analysis of $U_2(p)$. If $p\geq 4$, then
\begin{align*}
    U_2(p)\geq&[12744 -7353]p^8 -9644 p^7 +27386 p^6 +10084 p^5 -38427 p^4 -4382 p^3 +20714 p^2\\&+84 p -3888\\
    \geq &[5391(4)-9644]p^7+[27386(16)-38427]p^4+[10084(16)-4382]p^3\\
    &+[20714(16)+84(4)-3888]>0
\end{align*}
and, if $p\in(3,4)$, then
\begin{align*}
    U_2(p)\geq&[9558-7353]p^8 -9644 p^7 +27386 p^6 +10084 p^5 -38427 p^4 -4382 p^3 +20714 p^2\\&+84 p -3888\\
    \geq& [2205(3)-9644]p^7+27386 p^6 +10084 p^5 -38427 p^4 -4382 p^3 +20714 p^2\\&+84 p -3888\\
    \geq& [-3029(4)+27386]p^6+10084 p^5 -38427 p^4 -4382 p^3 +20714 p^2+84 p -3888\\
    \geq& [15270(9)-38427]p^4+[10084(9)-4382]p^3+[20714(9)+84(3)-3888]>0.
\end{align*}

\smallskip
\noindent\textbf{Step 3: conclude \eqref{eq:W-cL} and \eqref{eq:W-cU}.}
Since $\partial_x P_L(p,x)$ is affine in $x$, the inequalities $L_1(p)<0$ and $L_2(p)<0$
imply $\partial_x P_L(p,x)<0$ for all $x\in[\frac13,\frac{27}{64}]$. Thus $P_L(p,x)$ is decreasing in $x$ on this interval, hence
\[
P_L(p,x)\le P_L\!\left(p,\frac13\right)=\frac{1}{9}L_0(p)<0.
\]
This proves $P_L(p,x)<0$.

Similarly, $U_1(p)>0$ and $U_2(p)>0$ imply $\partial_x P_U(p,x)>0$ on
$[\frac13,\frac{27}{64}]$, so $P_U(p,x)$ is increasing in $x$ and therefore
\[
P_U(p,x)\ge P_U\!\left(p,\frac13\right)=\frac{1}{9}U_0(p)>0.
\]
This proves $P_U(p,x)>0$. Hence $(\ast\ast)$ holds, and then~\eqref{eq:W-cL}, \eqref{eq:W-cU}
give $W(r)\big|_{c=c_L}<0$ and $W(r)\big|_{c=c_U}<0$.

\smallskip
Finally, since $c(p)\in(c_L,c_U)$ and $c\mapsto W(r)$ is affine linear, we conclude
$W(r)\big|_{c=c(p)}<0$, i.e. $W(r)<0$ for all $p>3$.
\end{proof}








\section{Links to chat conversations with Grok}\label{grokchat}

In this section we provide links to chat conversations with Grok. 

\vskip0.5cm 

\begin{itemize}
    \item Counterexample. \\
\url{https://grok.com/share/c2hhcmQtNA_fcd3923c-9c6f-4d33-ba77-f196890932b9} \\

    \item Proof of Lemma~\ref{lem: dax1}. \\
\url{https://grok.com/share/c2hhcmQtNA_885159eb-bd8b-4fa6-b602-c673afb8825c?rid=04645a03-2250-44b6-9903-b419077f817f} \\

    \item Proof of Lemma~\ref{conc1}. \\
\url{https://grok.com/share/c2hhcmQtNA_17466e98-8494-438c-a0cf-43ded161b713} \\

    \item Proof of Theorem~\ref{jose} given some hints. \\
\url{https://grok.com/c/191f11d1-0596-43d7-8703-aec8c95b7408?rid=885aaf78-4e69-4fb5-8b5e-c43d19ee6775} \\

    \item Plots of $h(s)$ and $h'(s)$ for $p=4$. \\
\url{https://grok.com/share/c2hhcmQtNA_5bd64c5c-b016-4a93-9543-dd2f107a73b8} \\

\end{itemize}

\section*{Acknowledgments}

P.I. acknowledges partial support from the NSF CAREER grant DMS-2152401, a Simons Fellowship, and a Humboldt Research Fellowship for Experienced Researchers. P.I. also thanks the University of W\"{u}rzburg and Universit\'e de Toulouse for their warm hospitality. J.M. was partially supported by the AMS Stefan Bergman Fellowship and the Simons Foundation Grant $\# 453576$. J.M. is thankful to Felipe Goncalves for interesting discussion.

\bibliographystyle{plain}

\end{document}